\newtheorem{theorem}{Theorem}
\newtheorem{lemma}[theorem]{Lemma}
\newtheorem*{theorem*}{Theorem}
\def\XXint#1#2#3{{\setbox0=\hbox{$#1{#2#3}{\int}$ }
\vcenter{\hbox{$#2#3$ }}\kern-.6\wd0}}
\newcommand{\supp}{\operatorname{supp}}
\newcommand{\ent}{\mathbf{H}}
\newcommand{\nrg}{\mathbf{E}}
\newcommand{\dss}{\mathbf{D}}
\newcommand{\aux}{\mathbf{G}}
\newcommand{\ival}{J^1_K}
\newcommand{\hval}{J^{\nicefrac12}_K}
\newcommand{\edge}{\rho}
\newcommand{\dff}{\mathrm D}
\newcommand{\mratio}{\Lambda}
\newcommand{\cell}{\delta}
\newcommand{\hi}{{\nicefrac12}}
\newcommand{\kph}{{k+\nicefrac12}}
\newcommand{\kmh}{{k-\nicefrac12}}
\newcommand{\kapph}{{\kappa+\nicefrac12}}
\newcommand{\kapmh}{{\kappa-\nicefrac12}}
\newcommand{\dsol}{z}
\newcommand{\dd}{\,\mathrm{d}}
\newcommand{\dn}{\mathrm{d}}
\newcommand{\setR}{{\mathbb R}}
\begin{document}

%\title[Free Boundary Propagation in Lagrangian Coordinates]{Estimates on Free Boundary Propagation for the Discretized Thin-Film Equation: An Approach in Lagrangian Coordinates}
\title[Waiting time phenomena in discretizations]{The Waiting time phenomenon in spatially discretized porous~medium and thin~film equations}

\author{Julian Fischer}
\address{Julian Fischer \\ IST Austria \\ Am Campus 1 \\ 3400 Klosterneuburg \\ Austria}
\email{julian.fischer@ist.ac.at}
\author{Daniel Matthes}
\address{Daniel Matthes \\ TU Munich \\ Zentrum Mathematik \\ Boltzmannstr. 3 \\ 85747 Garching bei M\"unchen \\ Germany}
\email{matthes@ma.tum.de}
\thanks{This research was supported by the DFG Collaborative Research Center TRR 109, ``Discretization in Geometry and Dynamics".}

\begin{abstract}
  Various degenerate diffusion equations exhibit a waiting time phenomenon:
  Dependening on the ``flatness'' of the compactly supported initial datum at the boundary of the support,
  the support of the solution may not expand for a certain amount of time.  
  We show that this phenomenon is captured by particular Lagrangian discretizations
  of the porous medium and the thin-film equations,
  and we obtain suffcient criteria for the occurrence of waiting times
  that are consistent with the known ones for the original PDEs.
  Our proof is based on estimates on the fluid velocity in Lagrangian coordinates.
  Combining weighted entropy estimates with an iteration technique \`a la Stampacchia
  leads to upper bounds on free boundary propagation.
  Numerical simulations show that the phenomenon is already clearly visible for relatively coarse discretizations.
\end{abstract}

\maketitle

\section{Introduction}

\subsection{The evolution equations and waiting times}
In this paper, we prove the occurrence of the waiting time phenomenon
in appropriate spatial discretizations of two degenerate parabolic evolution equations in one space dimension:
The second order \emph{porous medium} (or \emph{slow diffusion}) equation with given exponent $m>1$,
\begin{align}
  \label{eq:PME1}
  \partial_ t u = \partial_{xx}(u^m),
\end{align}
and the fourth order \emph{thin film} (or \emph{lubrication}) equation with linear mobility,
\begin{align}
  \label{eq:TF1}
  \partial_t u = -\frac12\partial_x(u\,\partial_{xxx}u).
\end{align}
Both equations are known to admit non-negative global weak solutions
for the initial value problem with
\begin{align}
  \label{eq:ic}
  u(0;x) = \bar u(x),
\end{align}
for any initial datum $\bar u$ that is continuous, non-negative, and of compact support \cite{BernisFriedman,MMS,VazquezBook}.
The integral of $u$ is preserved under the evolution;
by homogeneity, it is no loss of generality to restrict attention to solutions of unit mass.

In the qualitative analysis of \eqref{eq:PME1} and \eqref{eq:TF1}, 
one of the key objects of interest is the growth of the support of the solution $u$ in time.
For the second order porous medium equation \eqref{eq:PME1},
it is easily seen from comparison principles
that an initially compactly supported solution is compactly supported at any later time as well,
and that the support cannot shrink,
see e.g. \cite{VazquezBook} for an overview on these and related results.
This comparison also provides rough lower and upper bounds on the speed at which the diameter of the support grows. 
A complementary approach via entropy methods provides estimates on the asymptotic proximity of general solutions 
to the compactly supported self-similar Barenblatt solutions \cite{PorousAsymptotic4,Dolbeault,OttoPME},
and thus gives a quantitative indication for the expected growth of the support at large times.

A little less is known about solutions to the fourth order equation \eqref{eq:TF1},
for which no comparison principle is available.
%A conceptual problem is that compactly supported solutions to \eqref{eq:TF1} are potentially very non-unique, and a discussion of the growth of support is only sensible if the class of solutions is narrowed down.
To prevent ill-posedness of the thin film equation \eqref{eq:TF1}, an additional boundary condition must be specified on the boundary of the support $\partial \supp u(\cdot,t)$; the typical choice (that we shall make here as well) is to restrict to solutions with zero contact angle,
which can be formally expressed as the condition $\partial_xu(\bar x)=0$ at any $\bar x$ at the edge of support.
In the existence theory of weak solutions, this condition is enforced via
%to the validity of
certain entropy or energy dissipation estimates \cite{BernisFriedman,BerettaBertschDalPasso,OnAFourthOrder,ThinViscous,
CauchyGruen} (for a stronger solution theory, we refer to \cite{GiacomelliKnuepferOtto,GiacomelliKnuepfer,GiacomelliGnannKnuepferOtto,
Gnann,Gnann2,GnannPetrache}).
For the thin-film equation with zero contact angle, it has been shown that
the support grows with finite speed \cite{BernisFinite},
and that solutions are asymptotically close to the compactly supported self-similar one \cite{CarrilloToscani,MMS}.
We remark that more refined information is available for thin film equations with higher degeneracy,
see e.\,g.\ \cite{BernisFriedman} for a result on the non-shrinkage of the support or \cite{DeNittiFischer,FischerSupportPropagationThinFilm,
FischerUpperBoundsThinFilmWeakSlippage} for a complete characterization of the waiting time and support propagation behavior in certain regimes.

Here we focus on the occurrence of waiting times,
which is a subtle phenomenon showing that
despite the aforementioned results on the \emph{eventual} uniform growth of the solution's support,
one cannot expect expansion to happen immediately after initialization.
Instead, the edge of support only moves when the solution has gained a certain steepness there; if the initial profile is very flat near the boundary,
then it takes a certain ``waiting time'' until mass has been re-distributed on the support
before the necessary degree of steepness has been reached.
Criteria on the initial condition for the occurrence of the waiting time phenomenon
have been established by various authors since the 1980's,
see \cite{LowerBounds} for a brief historical review on waiting times for degenerate parabolic equations,
and particularly \cite{WaitingTime} for the first significant result on thin film equations.
A sufficient criterion for the phenomenon, applied to \eqref{eq:PME1} and \eqref{eq:TF1}, is this:
let $a$ be the left edge of $\bar u$'s support, then a waiting time occurs there if
\begin{align}
  \label{eq:suffPME}
  \limsup_{\ell\downarrow a}\left[(\ell-a)^{-\frac{3m-1}{m-1}}\int_a^\ell \bar u(x)^m\dd x\right] &< \infty,  \\
  \label{eq:suffTF}
  \limsup_{\ell\downarrow a}\left[(\ell-a)^{-(5+4\alpha)}\int_a^\ell \bar u(x)^{1+\alpha}\dd x\right] &< \infty
  \quad \text{for some $\alpha>0$},
\end{align}
respectively.
Criterion \eqref{eq:suffPME} is essentially sharp for \eqref{eq:PME1};
sharpness of \eqref{eq:suffTF} for \eqref{eq:TF1} has been partially shown
just recently by the first author \cite{FischerStrongAndVeryWeakSlippage}.

\subsection{Lagrangian picture}
The spatial discretizations of \eqref{eq:PME1} and \eqref{eq:TF1} considered in the following
are based on the Lagrangian description of the evolution.
Despite the fact that the motion of the edge of the solution's support
--- the object of central interest when studying waiting times ---
is very conveniently described in Lagrangian coordinates,
the Lagrangian approach has apparently not been used so far in the literature.
For passage to the Lagrangian picture,
we consider both \eqref{eq:PME1} and \eqref{eq:TF1} as non-linear transport equations,
\begin{align}
  \label{eq:transport}
  \partial_t u + \partial_x\big(u\,V(u)\big) = 0,
\end{align}
with respect to a velocity $V(u)$ that depends on $u$,
\begin{align*}
  V(u) = -\frac{m}{m-1}\partial_x(u^{m-1}),
  \quad\text{and}\quad
  V(u) = \frac12\partial_{xxx}u,
  \quad\text{respectively}.
\end{align*}
The evolution equation is now written in terms of the Lagrangian map $X:[0,T]\times[0,1]\to\setR$
that traces the characteristics of \eqref{eq:transport}, i.e.,
\begin{align*}
  \partial_t X = V(u)\circ X.
\end{align*}
Intuitively, $t\mapsto X(t;\xi)$ is the trajectory of a particle.
We normalize $X$ to ``mass coordinates'', i.e.,
for each $\xi\in[0,1]$, the amount of mass to the left of $X(t;\xi)$ equals $\xi$.
Consequently, $t\mapsto X(t;0)$ and $t\mapsto X(t;1)$ trace, respectively,
the left and right edges of $u$'s support.
One can easily express $u$ in terms of $X$ via the identity $u(t;X(t;\xi))\partial_\xi X(t;\xi)=1$,
and then rewrite \eqref{eq:PME1} and \eqref{eq:TF1}
in terms of $X$ and $Z(t;\xi):=\frac1{\partial_\xi X(t;\xi)} = u(t;X(t;\xi))$ alone:
\begin{align}
  \label{eq:PMEX}
  \partial_t X & = -\partial_{\xi}\big(Z^m\big), \\
  \label{eq:TFX}
  \partial_t X &= \partial_\xi\left(\frac12Z^3\,\partial_{\xi\xi}Z+\frac14Z^2\big(\partial_\xi Z\big)^2\right).
\end{align}
In order to translate the full initial value problems \eqref{eq:PME1}\&\eqref{eq:ic} and \eqref{eq:TF1}\&\eqref{eq:ic}
into reasonable initial-boundary-value problems in Lagrangian coordinates,
we actually consider \eqref{eq:PMEX} and \eqref{eq:TFX} as equations in terms of $Z$ alone,
bearing in mind that $\partial_tZ=-Z^2\partial_\xi\partial_tX$.
Thus \eqref{eq:PMEX} and \eqref{eq:TFX} are actually a second and a fourth order parabolic PDE for $Z$, respectively.
The natural boundary conditions are $Z(0)=u(t;X(t;0))=0$ and $Z(1)=u(t;X(t;1))=0$ for both equations,
expressing that $X(t,0)$ and $X(t,1)$ mark the left and the right edge of the support.
The other two boundary conditions for the thin-film equation \eqref{eq:TFX} are more difficult to formulate in Lagrangian terms:
the assumption of zero contact angle formally manifests itself as $[\partial_\xi(Z^2)](0)=[\partial_\xi(Z^2)](1)=0$,
which expresses a subtle regularity property of $Z$.
In the discretization below, we interprete this boundary condition as homogeneous Neumann.

\subsection{Discretization}
For discretization of \eqref{eq:PMEX} and \eqref{eq:TFX},
we use finite differences with respect to the mass coordinate $\xi\in[0,1]$,
i.e., we subdivide $[0,1]$ into $K$ sub-intervals $[\xi_{k-1},\xi_k]$;
the $\xi_k$ are fixed in time.
The Lagrangian map $X$ is discretized by a time-dependent sequence $x=(x_0,x_1,\ldots,x_K)$ of positions $x_k(t)\in\setR$,
where $x_k(t)$ serves as approximation of $X(t;\xi_k)$.
Thinking of $X$ as the piecewise linear interpolation of the $x_k$'s with respect to the $\xi_k$'s,
the associated density function on $\setR$ is piecewise constant on each of the intervals $(x_\kph,x_\kmh)$,
with respective density values $\dsol_\kappa := \frac{\xi_\kph-\xi_\kmh}{x_\kph-x_\kmh}$;
here $\kappa\in \{\frac{1}{2},\frac{3}{2},\ldots,K-\frac{1}{2}\}$ is a half-integer index.
Then, with the usual notations $\dff$ and $\Delta$ for first and second order difference quotients
--- see Section \ref{sct:prelim} for details ---
our discretizations are given by
\begin{align}
  \label{eq:devolPME}
 \frac{\dn}{\dd t} x &= -\dff(\dsol^m), \\
  \label{eq:devolTF}
  \frac{\dn}{\dd t} x &= \dff \Big[\frac12\dsol^3 \Delta\dsol + \frac14\dsol^2 \Delta (\dsol^2) \Big],
\end{align}
respectively.
Both are augmented with homogeneous Dirichlet boundary conditions,
\begin{align}
  \label{eq:dhomDiri}
  \dsol_{-\hi} = \dsol_{K+\hi} = 0,
\end{align}
and for \eqref{eq:devolTF},
we additionally ask for homogeneous Neumann boundary conditions in the form
\begin{align}
  \label{eq:dhomNeum}
  \dsol_{-\nicefrac{3}{2}} = \dsol_{K+\nicefrac{3}{2}} = 0. 
\end{align}
The discretizations \eqref{eq:devolPME} and \eqref{eq:devolTF} have appeared
at various places in the literature, see e.g. \cite{Calvez,Gosse,Naldi}.
A thorough analysis has been performed in \cite{OsbergerMatthes,OsbergerMatthes2},
where --- among other properties --- convergence of the approximate solutions in the continuous limit is shown.

We remark that the original motivation for choosing \eqref{eq:devolPME} and \eqref{eq:devolTF} in this particular way
lies beyond the formal similarity to \eqref{eq:PMEX} and \eqref{eq:TFX}.
Namely, the latter two are gradient flows for the functionals
\begin{align*}
  \ent(X) = \int_0^1\frac{Z^{m-1}}{m-1}\dd\xi
  \quad\text{and}\quad
  \nrg(X) = \int_0^1\big(\partial_\xi\sqrt Z\big)^2\dd\xi,
\end{align*}
with respect to the $L^2$-Hilbert structure on the space of Lagrangian maps $X:[0,1]\to\setR$.
This is not a coincidence, but reflects the fact that
the original evolution equations \eqref{eq:PME1} and \eqref{eq:TF1} are metric gradient flows
with respect to the $L^2$-Wasserstein distance, for the Renyi entropy and the Dirichlet energy, respectively,
see \cite{AGS,OttoPME,RigorousLubricationApproximation}.
The ordinary differential equations \eqref{eq:devolPME} and \eqref{eq:devolTF} inherit that gradient flow structure
in the sense that they constitute gradient flows on $\setR^{K+1}$
for potentials that are approximations of the Renyi entropy and the Dirichlet energy for spatially discrete densities.
That additional gradient flow structure has been the key ingredient
for the convergence proofs in \cite{OsbergerMatthes,OsbergerMatthes2}.
There are further structural elements preserved, like convexity properties of $\ent$ and $\nrg$;
on basis of that, it has been proven in \cite{Osberger} that the discrete solutions to \eqref{eq:devolTF}
replicate the self-similar long-time asymptotics of solutions to \eqref{eq:TF1} very precisely.

For the analysis at hand, the gradient flow structure as such is of minor importance.
What is significant is a side effect:
the discrete evolution equations \eqref{eq:devolPME} and \eqref{eq:devolTF}
admit a variant of the following dissipation estimates for \eqref{eq:PME1} and \eqref{eq:TF1}, respectively:
\begin{align}
  \label{eq:ZestPME}
  -\frac{\dn}{\dd t}\int_0^1 \frac{Z^m}{m-1}\dd\xi
  &= \int_0^1\big[\partial_\xi(Z^m)\big]^2\dd\xi, \\
  \label{eq:ZestTF}
  -\frac{\dn}{\dd t}\int_0^1 \frac{Z^\alpha}\alpha\dd\xi
  &\ge \frac{1+\alpha-2\alpha^2}{13+4\alpha+\alpha^2}\int_0^1\big[Z^{3+\alpha}(\partial_{\xi\xi}Z)^2+Z^{1+\alpha}(\partial_\xi Z)^4\big]\dd\xi.
\end{align}
These are easily obtained --- at least formally --- using integration by parts.
There exist weighted variants of these estimates,
which have a smooth weight function $\phi\ge0$ under the integral.
These weighted estimates are the key element for our analysis of the waiting time phenomenon.
The spatially discrete versions of those are given in Lemma \ref{lem:dissPME} and Lemma \ref{lem:dissTF}, respectively.
We remark that a discrete analogue of the entropy dissipation estimate \eqref{eq:ZestTF} has also been of central importance for numerical schemes for the thin film equation in Eulerian coordinates, see \cite{GruenRumpf,NumericsGruen}.

\subsection{Results}
The two main results of our paper are rigorous proofs for the occurrence of the waiting time phenomenon
for spatially discrete solutions to \eqref{eq:devolPME} and \eqref{eq:devolTF}, respectively.
The setup is that an initial datum $\bar u$ for \eqref{eq:ic} is given,
which is continuous, non-negative, and positive in the iterior of its compact support $[a,b]$.
For a given discretization of mass space by grid points $\xi_0$ to $\xi_K$,
the discrete equations \eqref{eq:devolPME} and \eqref{eq:devolTF} are then solved
with initial data $\bar x_0$ to $\bar x_K$ for $x$ that are consistent with the grid,
namely such that
\begin{align}
  \label{eq:consistent}
  \int_{\bar x_0}^{\bar x_k} \bar u(x)\dd x = \xi_k
  \quad\text{for $k=0,1,\ldots,K$}.
\end{align}
In both cases, the result is that if a certain quantity $\bar b$
--- a quotient of integrals that measures the steepness of $\bar u$ near $x=a$ ---
is finite,
then the left edge of the support $x_0(t)$ barely moves over a time horizon that is the larger the smaller $\bar b$ is;
that time is independent of the mesh.
Here ``barely moves'' means that $x_0(t)$ deviates from its initial value $\bar x_0=a$
at most by a positive power of the left-most mass cell size $\cell_\hi$.
\begin{theorem}
  \label{thm:dpmemain}
  There is a constant $C$ that only depends on $m$
  and the non-uniformity of the used reference mesh $(\xi_k)_{k=0}^K$ in mass space (i.\,e.\ $\Lambda$ in \eqref{eq:meshratio})
  such that the following is true
  for all spatially discrete approximations of solutions to \eqref{eq:PME1} via \eqref{eq:devolPME}
  that are consistently initialized in the sense \eqref{eq:consistent}.
  Provided that
  \begin{align}
    \label{eq:defbarbPME}
    \bar b:=\sup_{\ell\in(a,b)}\frac{\int_{\bar x_0}^\ell\bar u(x)^m\dd x}{\left(\int_{\bar x_0}^\ell\bar u(x)\dd x\right)^{\frac{3m-1}{m+1}}}<\infty,
  \end{align}
  then a waiting time occurs at the left edge of support:
  \begin{align*}
    |x_0(t) - \bar x_0|\le 2\cell_{\hi}^{\frac{m-1}{m+1}}\sqrt{\bar bt}
    \quad\text{for all}\quad
    t\le C\bar b^{-\frac{m+1}{m-1}}.
  \end{align*}
\end{theorem}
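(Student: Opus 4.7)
The plan is to extend the classical weighted-energy waiting-time argument for the continuous porous medium equation to the Lagrangian discretization \eqref{eq:devolPME}. Since $\dot x_0=-\dff(\dsol^m)_0$ by \eqref{eq:devolPME}, Cauchy--Schwarz in time immediately yields
\[
|x_0(t)-\bar x_0|^2\le t\int_0^t\bigl(\dff(\dsol^m)_0\bigr)^2\dd s,
\]
and the entire task is reduced to estimating the time-integrated squared boundary velocity in terms of $\bar b$ and to tracking how the left-most cell width $\cell_\hi$ enters.

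The first step is to translate the hypothesis \eqref{eq:defbarbPME} to mass coordinates. Using $\xi=\int_{\bar x_0}^\ell\bar u\dd x$ and the identification $\bar u(\ell)=\bar\dsol$ along the initial grid, a short calculation shows that \eqref{eq:defbarbPME} is equivalent to a one-sided upper bound on left-localized discrete entropies,
\[
\sum_{j\le\kappa}\cell_j\,\bar\dsol_j^{m-1}\le C(m,\mratio)\,\bar b\,\xi_\kappa^{(3m-1)/(m+1)}\qquad\text{for every half-integer }\kappa,
\]
with $\xi_\kappa\asymp\kappa\cell_\hi$ by the quasi-uniformity of the reference mesh. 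For each cut-off index $\kappa^\star$, I introduce the piecewise-affine weight $\phi^{\kappa^\star}_\kappa=\max\bigl\{1-\xi_\kappa/\xi_{\kappa^\star},\,0\bigr\}$ and the weighted discrete entropy $\ent^{\kappa^\star}(t)=\sum_\kappa\cell_\kappa\phi^{\kappa^\star}_\kappa\dsol_\kappa^{m-1}/(m-1)$, whose initial value is controlled by $C\bar b\,\xi_{\kappa^\star}^{(3m-1)/(m+1)}$. The weighted discrete dissipation estimate of Lemma~\ref{lem:dissPME} (the weighted analog of \eqref{eq:ZestPME}) then supplies, after absorbing commutator terms of size $|\dff\phi^{\kappa^\star}|\sim\xi_{\kappa^\star}^{-1}$ into the good dissipation via Cauchy--Schwarz,
\[
\int_0^t\sum_\kappa\cell_\kappa\phi^{\kappa^\star}_\kappa\bigl(\dff(\dsol^m)_\kappa\bigr)^2\dd s\;\le\;2\,\ent^{\kappa^\star}(0)\;\le\;C\bar b\,\xi_{\kappa^\star}^{(3m-1)/(m+1)},
\]
uniformly on the time horizon $t\le C\bar b^{-(m+1)/(m-1)}$ dictated by that absorption.

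A Stampacchia-type iteration in $\kappa^\star$ then peels off the contributions of successive cells from the left: with $\kappa^\star$ chosen minimal so $\xi_{\kappa^\star}\sim\cell_\hi$ and $\phi^{\kappa^\star}_0\sim 1$, the Dirichlet boundary condition $\dsol_{-\hi}=0$ allows the left-most term to be isolated, giving
\[
\int_0^t\cell_\hi\bigl(\dff(\dsol^m)_0\bigr)^2\dd s\le C\bar b\,\cell_\hi^{(3m-1)/(m+1)}.
\]
Substituting into the Cauchy--Schwarz bound above and simplifying $(3m-1)/(m+1)-1=2(m-1)/(m+1)$ produces the claimed displacement estimate. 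The main technical obstacle is precisely the cut-off surgery in the weighted dissipation step: since $\phi^{\kappa^\star}$ does not commute with the nonlocal operator $\dff$, and since the cell widths $x_\kph-x_\kmh=\cell_\kappa/\dsol_\kappa$ themselves evolve in time, the commutator terms couple the weighted entropy to the unweighted dissipation in a delicate way. The slope $\sim\xi_{\kappa^\star}^{-1}$ of the affine weight is the minimal scale at which those commutators can be absorbed with a constant depending only on $m$ and $\mratio$, and preserving the sharp scaling in $\cell_\hi$ throughout this absorption is what forces the precise exponents $(3m-1)/(m+1)$ and $(m-1)/(m+1)$ in the statement.
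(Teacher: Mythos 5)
There is a genuine gap at the heart of the proposal, in the step where you claim that the weighted dissipation estimate yields
\[
\int_0^t\sum_\kappa\cell_\kappa\phi^{\kappa^\star}_\kappa\bigl(\dff(\dsol^m)_\kappa\bigr)^2\dd s\;\le\;2\,\ent^{\kappa^\star}(0)
\]
``after absorbing commutator terms \dots into the good dissipation via Cauchy--Schwarz.'' The commutator generated by the cut-off in the weighted dissipation computation is not a dissipation-type quantity. If you expand $\sum_k\dff_k(\phi^2\dsol^m)\,\dff_k(\dsol^m)\,\cell_k$ with the product rule and apply Young's inequality to the cross term, you obtain a small multiple of $[\dff(\dsol^m)]^2$ (absorbable) \emph{plus} a term of the form $\xi_{\kappa^\star}^{-2}\sum_\kappa\dsol_\kappa^{2m}\cell_\kappa$. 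This latter term has the homogeneity of $\|\dsol^m\|_{L^2}^2$, not of the dissipation $\|\dff(\dsol^m)\|_{L^2}^2$; it is what the paper calls $\aux_{i+1}$, and it cannot be absorbed by a pointwise Cauchy--Schwarz argument. This is exactly the term driving the whole difficulty, and your proposal never explains how to control it.

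The missing ingredient is the discrete Gagliardo--Nirenberg--Sobolev interpolation (Lemma~\ref{LemmaGNS}): $\aux_i$ is sandwiched between $\ent_i$ and $\dss_i$, so after an application of H\"older in time one gets $\int_0^T\aux_i\dd t\lesssim T^{\theta'}\bigl(\sup_t\ent_i\bigr)^{\alpha}\bigl(\int_0^T\dss_i\bigr)^{\beta}$, and feeding the dissipation estimate at scale $i$ into this produces the superlinear recursion $a_i\le c[a_{i+1}+b]^{1+\theta}$. The smallness condition on $T$ (hence the waiting-time threshold) is forced by closing \emph{this} recursion, not by any ``absorption'' within the dissipation identity itself. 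Your description of the time restriction as ``dictated by that absorption'' therefore attributes the obstacle to the wrong place, and the ``peeling off successive cells from the left'' picture of the Stampacchia iteration is not what the argument requires: the iteration runs over dyadic scales $\edge_i$ from coarse to fine, propagating the bound $a_i\le b$ downward. Without the GNS step, the chain from the assumption \eqref{eq:defbarbPME} to the boundary-velocity bound simply does not close, so the proof as outlined cannot be completed.
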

\noindent
It is readily checked that some $\bar u$ satisfying
\begin{align}
  \label{eq:almostpower}
  C^{-1}(x-a)^p\le\bar u(x)\le C(x-a)^p  
\end{align}
with some $C>0$ for all $x$ sufficiently close to $a$
meets \eqref{eq:defbarbPME} if and only if $p\ge\frac2{m-1}$.
The same power $p$ is critical in the criterion \eqref{eq:suffPME}.
\begin{theorem}  
  \label{thm:dtfmain}
  Assume that the mass mesh is equi-distant, $\xi_k=k/K$.
  For each positive $\alpha<\frac1{32}$, there are constants $C$ and $M$ 
  such that the following is true for all spatially discrete approximations of solutions to \eqref{eq:TF1} via \eqref{eq:devolTF}
  that are consistently initialized in the sense \eqref{eq:consistent}.
  Provided that
  \begin{align}
    \label{eq:defbarb}
    \bar b:=\sup_{\ell\in(a,b)}\frac{\int_{\bar x_0}^\ell\bar u(x)^{1+\alpha}\dd x}{\left(\int_{\bar x_0}^\ell\bar u(x)\dd x\right)^{1+\frac45\alpha}}<\infty,
  \end{align}
  then a waiting time occurs at the left edge of support:
  \begin{align}
    \label{eq:dpmemain}
    |x_0(t) - \bar x_0|\le
    M\cell^{\frac15}\big(\bar b^4t^{1+\alpha}\big)^{\frac1{5+\alpha}}
    \quad\text{for all}\quad
    t\le C\bar b^{-\frac5\alpha}.
  \end{align}
\end{theorem}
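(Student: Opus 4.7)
My approach parallels the proof of Theorem~\ref{thm:dpmemain}, replacing the second-order weighted entropy dissipation by the fourth-order one (Lemma~\ref{lem:dissTF}) and, crucially, using a genuine Stampacchia iteration in place of a single application. As a first step, I would rewrite the data in Lagrangian mass coordinates $\xi=\int_{\bar x_{0}}^{x}\bar u(y)\dd y$, so that the initial Lagrangian density reads $\bar Z(\xi)=\bar u(x)$. In these coordinates hypothesis~\eqref{eq:defbarb} becomes
\[
\int_{0}^{\xi}\bar Z(\xi')^{\alpha}\dd\xi'\le\alpha\,\bar b\,\xi^{1+\frac{4\alpha}{5}}\qquad\text{for all }\xi\in(0,1),
\]
and thanks to the consistent initialization~\eqref{eq:consistent}, the same bound (up to a $\mratio$-dependent factor) holds for the discrete initial data $\bar\dsol_{\kappa}$. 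The whole problem then reduces to tracking the evolution of the leftmost portion of the discrete $\alpha$-entropy $\sum_{\kappa}\cell_{\kappa}\dsol_{\kappa}^{\alpha}/\alpha$.

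The core of the argument is a Stampacchia iteration on a nested family of mass-windows. Fix a scale $\eta\in(0,1)$ to be chosen at the end, set $\eta_{n}:=\eta(1+2^{-n})$ for $n\ge 0$, and pick smooth cutoffs $\phi^{(n)}\colon[0,1]\to[0,1]$ that equal $1$ on $[0,\eta_{n+1}]$, vanish on $[\eta_{n},1]$, and satisfy $|\dff\phi^{(n)}|+|\dff^{2}\phi^{(n)}|^{1/2}\lesssim 2^{n}/\eta$ pointwise on the mass mesh. Evaluating the weighted form of Lemma~\ref{lem:dissTF} against $\phi^{(n)}$ produces an inequality
\[
-\frac{\dn}{\dd t}\aux_{n}(t)+c_{\alpha}\,\dss_{n}(t)\le\mathcal{R}_{n}(t),
\]
where $\aux_{n}(t):=\sum_{\kappa}\phi^{(n)}_{\kappa}\dsol_{\kappa}(t)^{\alpha}\cell_{\kappa}/\alpha$, $\dss_{n}$ is the associated weighted version of the right-hand side of \eqref{eq:ZestTF}, and $\mathcal{R}_{n}$ is a remainder in which discrete derivatives of $\phi^{(n)}$ act on lower powers of $\dsol$. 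Defining the accumulated quantity $Y_{n}(t):=\sup_{s\le t}\aux_{n}(s)+\int_{0}^{t}\dss_{n}(s)\dd s$, the aim is a nonlinear recursion
\[
Y_{n+1}(t)\le C\,\varrho^{\,n}\,\eta^{-\beta}\,t^{\gamma}\,Y_{n}(t)^{\theta}
\]
for some $\varrho>1$, $\theta>1$ and explicit $\beta,\gamma>0$.

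Deriving this recursion is the principal technical obstacle. The remainder $\mathcal{R}_{n}$ must be absorbed into $Y_{n}^{\theta}$ by a discrete Gagliardo--Nirenberg-type interpolation between the zeroth-order mass $\sum\cell_{\kappa}\dsol_{\kappa}^{\alpha}$ and the two structurally distinct dissipation terms $\dsol^{3+\alpha}(\Delta\dsol)^{2}$ and $\dsol^{1+\alpha}(\dff\dsol)^{4}$ from \eqref{eq:ZestTF}. The exponent book-keeping required to obtain $\theta>1$ is tight and is precisely what forces the restriction $\alpha<\tfrac{1}{32}$. Once the recursion is in place, a standard Stampacchia lemma yields $Y_{n}\to 0$ provided the initial quantity $Y_{0}$ is sufficiently small, and the first step bounds $Y_{0}$ by a multiple of $\bar b\,\eta^{1+4\alpha/5}$, translating the smallness assumption into a lower bound for $\eta$.

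Finally, I would convert the conclusion $Y_{\infty}(t)\equiv 0$ on $[0,\eta]$ into the desired bound on $|x_{0}(t)-\bar x_{0}|$. The scheme~\eqref{eq:devolTF} evaluated at the boundary, together with the boundary conditions \eqref{eq:dhomDiri}--\eqref{eq:dhomNeum}, expresses $\dot x_{0}$ as a finite-difference combination of the first few $\dsol_{\kappa}$ and is controlled, via Cauchy--Schwarz, by $\cell_{\hi}^{1/5}$ times the square root of the weighted dissipation density near the edge. Integrating in time and inserting the Stampacchia estimate gives $|x_{0}(t)-\bar x_{0}|\lesssim\cell^{1/5}\eta^{*}$ for an explicit exponent; optimising through $\eta^{5+\alpha}\sim\bar b^{4}t^{1+\alpha}$ equalises the two contributions and produces the claimed bound \eqref{eq:dpmemain}. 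The time threshold $t\le C\bar b^{-5/\alpha}$ corresponds exactly to the range in which this optimal $\eta$ remains below $1$, ensuring that all the cutoffs $\phi^{(n)}$ fit inside $[0,1]$.
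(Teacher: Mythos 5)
Your opening steps (recasting hypothesis \eqref{eq:defbarb} in mass coordinates, invoking Lemma~\ref{lem:dissTF} with cutoffs, and reading off $\dot x_0$ from \eqref{eq:devolTF} with the boundary conditions \eqref{eq:dhomDiri}--\eqref{eq:dhomNeum}) match the paper's strategy, but the iteration you build in the middle is structurally wrong. You take scales $\eta_n=\eta(1+2^{-n})$ accumulating at a fixed $\eta$ and aim for $Y_n\to 0$, where $Y_n=\sup_{s\le t}\aux_n(s)+\int_0^t\dss_n$. This cannot succeed: $\aux_n(s)\ge 0$ and $\aux_n(0)$ is bounded below by the initial $\alpha$-entropy on $[0,\eta]$, a fixed positive number (since $\bar u>0$ in the interior of its support and the initialization \eqref{eq:consistent} transfers this to the $\bar\dsol_\kappa$). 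So a recursion of the form $Y_{n+1}\le C\varrho^n\eta^{-\beta}t^\gamma Y_n^\theta$ with $\theta>1$ simply cannot hold, and the classical De Giorgi/Stampacchia ``vanishing'' conclusion is unavailable. Related: your claim that $Y_0\lesssim\bar b\,\eta^{1+4\alpha/5}$ is not an initial-data statement --- $Y_0$ already involves the evolved solution and its accumulated dissipation, so its smallness would itself require an a priori estimate that you haven't established.

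The paper's iteration avoids these issues because it is of a different type: it runs over dyadic mass-scales $\rho_i=2^i/K$ that span the \emph{entire} domain from the single-cell scale up to $\rho_I\approx 1$, the base case $a_I$ is controlled by the unweighted whole-domain estimate \eqref{eq:tfdissI} (no cutoff, hence no remainder), and the inductive step propagates a \emph{uniform bound} $a_i\le b$ downward --- not a vanishing. Smallness does not come from assuming $Y_0$ small, but from choosing the parameter $\sigma$ and the time horizon $T\lesssim\bar b^{-5/\alpha}$ so that the constants $c_1,c_2$ in the recursion $a_i\le c_1(a_{i+1}+b)+c_2(a_{i+1}+b)^{1+4\theta}$ become favorable. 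Incidentally, the proof of Theorem~\ref{thm:dpmemain} for the porous medium case already uses precisely this kind of downward Stampacchia iteration, so the contrast you draw (``genuine iteration in place of a single application'') does not reflect the structure of the paper. The point where your plan must be repaired is to replace the shrinking family near a fixed $\eta$ by dyadic scales covering $[0,1]$, replace the target $Y_\infty=0$ by a uniform bound by the maximal weighted initial entropy $b$, and seed the iteration at the top scale from the global estimate \eqref{eq:tfdissI}; then the boundary estimate $a_2\le b$, i.e.\ \eqref{eq:crucialTF}, delivers the velocity bound for $x_0$.
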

Similarly as above, for initial data $\bar u$ of the form \eqref{eq:almostpower},
condition \eqref{eq:defbarb} defines the same critical power $p=4$ as \eqref{eq:suffTF}.

To the best of our knowledge, our results are the first analytically rigorous ones
on the preservation of the waiting time phenomenon under spatial discretization.
We further emphasize that our calculations are apparently the first ones on the topic of waiting times
that have been carried out consistently in the Lagrangian picture, which seems very natural.
Although the estimates are formulated for the spatially discretized equations,
it is easily deduced how they carry over to \eqref{eq:PMEX} and \eqref{eq:TFX}, respectively.

\section{Preliminaries on the discretization}
\label{sct:prelim}
\subsection{Indices}
Let a natural number $K\ge2$ of discretization intervals be fixed.
Define the index sets
\[ k\in\ival:=\{0,1,2,\ldots,K\}, \qquad
\kappa\in\hval:=\left\{\nicefrac12,\nicefrac32,\nicefrac52,\ldots,K-\nicefrac12\right\}
\]
for integer and for non-integer half-values, respectively.
$\ival$ and $\hval$ are used to label points and intervals in between points, respectively.

\subsection{Mass space discretization}
Next, let a mesh $(\xi_k)_{k\in\ival}$ in ``mass space'' is given, i.e.,
\begin{align*}
  0=\xi_0<\xi_1<\cdots<\xi_K=1.
\end{align*}
Intuitively, the interval lengths
\begin{align*}
  \cell_\kappa := \xi_\kapph-\xi_\kapmh \quad\text{for $\kappa\in\hval$},
\end{align*}
are (time-independent) ``mass lumps'';
our convention is that $\cell_{-\hi}=\cell_{K+\hi}:=0$.
For notational convenience, we further introduce
\begin{align*}
  \cell_k := \frac{\cell_\kph+\cell_\kmh}2 \quad\text{for $k\in\ival$},
\end{align*}
so in particular $\cell_0=\xi_1/2$.
As usual, the mesh ratio $\mratio\ge1$ for $(\xi_k)_{k\in\ival}$ is defined as
\begin{align}
  \label{eq:meshratio}
  \mratio:=\max\left\{
  \max_{k=1,\ldots,K-1}\left(\frac{\cell_{\kph}}{\cell_\kmh}\right),
  \max_{k=1,\ldots,K-1}\left(\frac{\cell_{\kmh}}{\cell_\kph}\right)
  \right\}.
\end{align}
For the dual meshes, this implies that
\begin{align}
  \label{eq:meshestimate}
  \cell_k\le\frac{1+\mratio}2\min(\cell_\kph,\cell_\kmh) \quad k=1,\ldots,K-1.
\end{align}
Further, we introduce the finite sequence $(k^*_i)_{i=1}^{I}$ of indices $k^*_i$ as follows:
\begin{itemize}
\item $k^*_1=1$.
\item Given $k^*_{i-1}$ for some $i>1$:
  if $2\xi_{k^*_{i-1}}\ge1$, then set $I:=i$ and $k^*_I:=K$.
  Otherwise, define $k^*_{i}$ as the smallest index $k$ such that $\xi_k\ge2\xi_{k^*_{i-1}}$.
\end{itemize}
There is an accompanying increasing sequence $(\edge_i)_{i=1}^I$ of masses $\edge_i=\xi_{k^*_i}\in(0,1]$ for $i<I$, and $\edge_I=1$.
By construction and by \eqref{eq:meshratio},
we have that
\begin{align}
  \label{eq:edges}
  2\edge_{i-1}\le\edge_i<(2+\mratio)\edge_{i-1} \quad i=2,3,\ldots,I-1.
\end{align}
As usual, an \emph{equidistant mesh} is one in which all cells have the same size $\delta_\kappa\equiv\delta:=1/K$, i.e., $\xi_k=k/K$.
In that case, $\mratio=1$, and one has $k^*_i=2^{i-1}$ for $i=1,\ldots,I-1$, and accordingly $\edge_i=2^{i-1}/K$.

\subsection{Grid functions and difference operators}
By a \emph{grid function}, we mean a map $f:\hval\to\setR$.
Its canonical interpretation is that of a function on $[0,1]$
that is piecewise constant on the intervals $(\xi_\kapmh,\xi_\kapph)$ with respective values $f_\kappa$.
We define a difference operator $\dff$ for grid functions $f$ 
such that $\dff_kf$ is defined for $k=1,2,\ldots,I-1$ in the canonical way:
\begin{align*}
  \dff_kf = \frac{f_\kph-f_\kmh}{\cell_k}.
\end{align*}
We shall often assume additional values $f_{-\hi}$ and $f_{K+\hi}$,
such that $\dff_0f$ and $\dff_Kf$ are defined as well.
The difference operator is accompanied by a discete Laplacian $\Delta$,
which maps a grid function $f$
(augmented with boundary values $f_{-\hi}$ and $f_{K+\hi}=0$)
to a grid function $\Delta f$ as follows:
\begin{align*}
  \Delta_\kappa f = \frac{\dff_\kapph f-\dff_\kapmh f}{\cell_\kappa} .
\end{align*}
This is in accordance with the standard rule for summation-by-parts,
\begin{align}
  \label{eq:sbp}
  - \sum_{\kappa\in\hval} f_\kappa\Delta_\kappa g\,\cell_\kappa = \sum_{k\in\ival}\dff_k f\dff_k g\,\cell_k.
\end{align}
Note that on an equi-distant grid, where all cells have the same length $\cell$,
the definition of the Laplacian coincides with the well-known finite-difference quotient,
\begin{align*}
  \Delta_\kappa f = \frac{f_{\kappa+1}-2f_\kappa+f_{\kappa-1}}{\cell^2}.
\end{align*}
\begin{lemma}
  For two grid functions $f$ and $g$, the following product rule holds:
  \begin{align}
    \label{eq:productrule1}
    \dff_k(fg) = \frac12(f_\kph+f_\kmh)(\dff_kg) + \frac12(\dff_kf)(g_\kph+g_\kmh).
  \end{align}
  Moreover, if the grid is equi-distant, then
  \begin{align}
    \label{eq:productrule2}
    \Delta_\kappa(fg) = f_\kappa (\Delta_\kappa g)+(\Delta_\kappa f)g_\kappa
    + (\dff_\kapph f)(\dff_\kapph g) + (\dff_\kapmh f)(\dff_\kapmh g).
  \end{align}
\end{lemma}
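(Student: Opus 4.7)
Both identities are purely algebraic and I would prove them by direct expansion of the difference quotients, with no reliance on any earlier result beyond the very definitions of $\dff_k$ and $\Delta_\kappa$ from the preceding paragraph. The plan is to unfold the left-hand side according to the definition, unfold the right-hand side in the same way, and observe that the mixed terms combine into the desired form after cancellation. Nothing more is going on — the two identities are the discrete analogues of $(fg)' = \frac{1}{2}(f+f)g' + \frac{1}{2}f'(g+g)$ and of $(fg)'' = f''g + fg'' + 2f'g'$, and the only subtlety is that the ``Leibniz product'' at a node must be reconstructed from values at the half-integer neighbors, which forces the symmetrized averages $\tfrac12(f_\kph+f_\kmh)$ in \eqref{eq:productrule1} and the two one-sided cross-terms $(\dff_\kapph f)(\dff_\kapph g)$, $(\dff_\kapmh f)(\dff_\kapmh g)$ in \eqref{eq:productrule2}.

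For \eqref{eq:productrule1}, I would multiply out the right-hand side over the common denominator $\cell_k$:
\begin{align*}
  \tfrac12(f_\kph+f_\kmh)(\dff_k g) + \tfrac12(\dff_k f)(g_\kph+g_\kmh)
  = \tfrac{1}{2\cell_k}\bigl[(f_\kph+f_\kmh)(g_\kph-g_\kmh) + (f_\kph-f_\kmh)(g_\kph+g_\kmh)\bigr].
\end{align*}
Expanding the two brackets produces four products $f_\kph g_\kph$, $f_\kmh g_\kmh$ with coefficient $+2$ each, while the mixed terms $f_\kph g_\kmh$ and $f_\kmh g_\kph$ cancel. Dividing by $2\cell_k$ gives $(f_\kph g_\kph - f_\kmh g_\kmh)/\cell_k = \dff_k(fg)$, as required.

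For \eqref{eq:productrule2} I would use the equi-distant formula $\Delta_\kappa h = (h_{\kappa+1}-2h_\kappa+h_{\kappa-1})/\cell^2$ and expand each of the four terms on the right-hand side. Writing every contribution over the common denominator $\cell^2$, one then inspects the coefficient of each monomial $f_i g_j$ with $i,j\in\{\kappa-1,\kappa,\kappa+1\}$. The ``diagonal'' monomials $f_{\kappa+1} g_{\kappa+1}$ and $f_{\kappa-1} g_{\kappa-1}$ each appear only once (through the corresponding $(\dff f)(\dff g)$ factor), the monomial $f_\kappa g_\kappa$ collects the contributions $-2,-2,+1,+1$ for a total of $-2$, and every off-diagonal mixed monomial $f_\kappa g_{\kappa\pm1}$, $f_{\kappa\pm1}g_\kappa$ appears exactly twice with opposite signs and cancels. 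What remains is precisely $\cell^{-2}(f_{\kappa+1}g_{\kappa+1}-2f_\kappa g_\kappa+f_{\kappa-1}g_{\kappa-1}) = \Delta_\kappa(fg)$.

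The only ``obstacle'' is therefore bookkeeping: one has to track nine monomials for \eqref{eq:productrule2} and check that eight coefficients come out as they should. I would present the computation either as the coefficient-by-coefficient check outlined above, or more compactly by adding and subtracting $f_\kappa g_{\kappa+1}+f_{\kappa+1}g_\kappa$ (resp.\ $f_\kappa g_{\kappa-1}+f_{\kappa-1}g_\kappa$) so that $(\dff_\kapph f)(\dff_\kapph g)$ and $f_\kappa \dff_\kapph g + (\dff_\kapph f)g_\kappa$ assemble into a discrete Leibniz identity at the half-integer $\kapph$, whose difference at $\kapph$ and $\kapmh$ then yields the claim. No use is made of the non-equi-distance of the mesh in \eqref{eq:productrule2}; indeed the symmetry that causes the off-diagonal cancellation breaks down in that case, which is precisely why the hypothesis $\cell_\kappa\equiv\cell$ is stated.
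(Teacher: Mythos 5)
Your proposal is correct and matches the paper's own proof, which is the same direct algebraic expansion of the difference quotients (the paper merely reads the identities from left to right, regrouping $\dff_k(fg)$ and $\Delta_\kappa(fg)$ into the Leibniz terms, whereas you expand the right-hand sides and check coefficients). Your coefficient bookkeeping for \eqref{eq:productrule2} is accurate, and your closing remark that equi-distance is what makes the off-diagonal cancellation work is consistent with the paper's comment that non-uniform meshes produce non-trivial coefficients in front of the first-difference products.
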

A formula similar to \eqref{eq:productrule2} holds for non equi-distant meshes as well,
with non-trivial coefficients in front of the product of first derivatives.
\begin{proof}
  Both rules follow by straight-forward calculation.
  On the one hand,
  \begin{align*}
    \dff_k(fg)
    &= \frac{f_\kph g_\kph-f_\kmh g_\kmh}{\cell_k} \\
    &= \frac{(f_\kph+f_\kmh)(g_\kph-g_\kmh)+(f_\kph-f_\kmh)(g_\kph+g_\kmh)}{2\cell_k} \\
    &= \frac12(f_\kph+f_\kmh)\dff_kg + \frac12(\dff_kf)(g_\kph+g_\kmh).
  \end{align*}
  And on the other hand,
  \begin{align*}
    \Delta_\kappa(fg)
    &=\frac{f_{\kappa+1}g_{\kappa+1}-2f_\kappa g_\kappa + f_{\kappa-1}g_{\kappa-1}}{\cell^2} \\
    &= f_\kappa\frac{g_{\kappa+1}-2g_\kappa+g_{\kappa-1}}{\cell^2}
      + \frac{(f_{\kappa+1}-f_\kappa)g_{\kappa+1}+(f_{\kappa-1}-f_\kappa)g_{\kappa-1}}{\cell^2} \\
    &= f_\kappa(\Delta_\kappa g)
      + \frac{f_{\kappa+1}-2f_\kappa+f_{\kappa-1}}{\cell^2}g_\kappa
      + \frac{(f_{\kappa+1}-f_\kappa)(g_{\kappa+1}-g_\kappa)+(f_{\kappa-1}-f_\kappa)(g_{\kappa-1}-g_\kappa)}{\cell^2} \\
    &= f_\kappa (\Delta_\kappa g)+(\Delta_\kappa f)g
    + \dff_\kapph f\dff_\kapph g + \dff_\kapmh f\dff_\kapmh g.
  \end{align*}
\end{proof}

\subsection{Lagrangian map}
For solutions to \eqref{eq:devolPME} and \eqref{eq:devolTF},
the mass discretization $(\xi_k)_{k\in\ival}$ is fixed in time, 
while the corresponding discretization in physical space,
\begin{align*}
  -\infty<x_0(t)<x_1(t)<\cdots<x_K(t)<+\infty,
\end{align*}
evolves.
The vector $(x_k(t))_{k\in\ival}$ is the discretized analogue
of the time-dependent Lagrangian map $X(t):[0,1]\to\setR$,
which satisfies \eqref{eq:PMEX} or \eqref{eq:TFX}, respectively.
It is associated to a density function on $\setR$ of compact support,
which attains the constant value
\begin{align}
  \label{eq:defz}
  \dsol_\kappa(t) := \frac{\xi_\kapph-\xi_\kapmh}{x_\kapph(t)-x_\kapmh(t)}
\end{align}
in between the two consecutive points $\smash{x_\kapmh(t)}$ and $\smash{x_\kapph(t)}$.
In accordance with the boundary conditions \eqref{eq:dhomDiri} and \eqref{eq:dhomNeum},
we shall use $\dsol_\kappa(t)\equiv0$ for all half-integer indices $\kappa$ outside of $\smash{\hval}$.
For the conversion of the prescribed initial value $\bar u$ in \eqref{eq:ic}
to initial values $\bar x_k$ for the $x_k$,
we use the consistency relation \eqref{eq:consistent}.

\subsection{A discrete GNS inequality}
The following interpolation inequality plays an important role in the dissipation estimates that follow. We defer its elementary proof to the appendix.
\begin{lemma}
\label{LemmaGNS}
  For a grid function $f\ge0$ with $f_{-\hi}=0$ and any $r\in(0,1)$, $s\in(0,\frac13)$, 
  we have at each $k^*\in\{1,2,\ldots,K\}$ that
  \begin{align}
    \label{eq:dGNS}
    \sum_{\kappa=\hi}^{k^*-\hi}f_\kappa^2\cell_\kappa
    \le A_{2,r}\left(\sum_{k=0}^{k^*-1}(\dff_k f)^2\cell_k\right)^{\frac{1-r}{1+r}}\left(\sum_{\kappa=\hi}^{k^*-\hi}f_\kappa^{2r}\cell_\kappa\right)^{\frac2{1+r}}, \\
    \label{eq:dGNS4}
    \sum_{\kappa=\hi}^{k^*-\hi}f_\kappa^4\cell_\kappa
    \le A_{4,s}\left(\sum_{k=0}^{k^*-1}(\dff f)_k^4\cell_k\right)^{\frac{1-s}{1+3s}}\left(\sum_{\kappa=\hi}^{k^*-\hi}f_\kappa^{4s}\cell_\kappa\right)^{\frac4{1+3s}},
  \end{align}
  with the respective constants
  \begin{align*}
    A_{2,r} = \big[2^{1-2r}(1+r)^2(1+\mratio)\big]^{\frac{1-r}{1+r}},\quad
    A_{4,s} = \big[2^{1-12s}(1+3s)^4(1+\mratio)^3\big]^{\frac{1-s}{1+3s}}.
  \end{align*}
\end{lemma}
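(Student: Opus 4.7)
The plan is to follow the classical discrete Gagliardo--Nirenberg recipe: first obtain an $L^\infty$ bound for $f$ by telescoping from the Dirichlet condition $f_{-\hi}=0$, and then interpolate the $\sum_\kappa f_\kappa^2\cell_\kappa$-sum (resp.\ $\sum_\kappa f_\kappa^4\cell_\kappa$) against the weighted $L^{2r}$-sum (resp.\ $L^{4s}$-sum) via the pointwise factorization $f_\kappa^2=f_\kappa^{2(1-r)}f_\kappa^{2r}$ (resp.\ $f_\kappa^4=f_\kappa^{4(1-s)}f_\kappa^{4s}$). For the first step, I would write $f_\kappa^{1+r}=\sum_{k=0}^{\kappa-\hi}\bigl(f_{k+\hi}^{1+r}-f_{k-\hi}^{1+r}\bigr)$ using $f_{-\hi}=0$, estimate each summand by the elementary inequality $|a^{1+r}-b^{1+r}|\le(1+r)\max(a,b)^r|a-b|$ for $a,b\ge 0$ (immediate from $\int_b^a t^r\,\dn t\le\max(a,b)^r|a-b|$), and use $|f_{k+\hi}-f_{k-\hi}|=|\dff_k f|\cell_k$ to arrive at
\begin{equation*}
f_\kappa^{1+r}\le(1+r)\sum_{k=0}^{\kappa-\hi}\max(f_{k+\hi},f_{k-\hi})^r\,|\dff_k f|\,\cell_k,
\end{equation*}
and an analogous bound with exponent $1+3s$ in place of $1+r$ prepares \eqref{eq:dGNS4}.

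\textbf{Steps 2--3 (H\"older, mesh reduction, interpolation).} Next I would apply Cauchy--Schwarz to the Step 1 bound (H\"older with conjugate exponents $4/3$ and $4$ for the $L^4$ version) and then take the maximum over $\kappa\le k^*-\hi$ on the left. The only nontrivial move is converting the dual-mesh sum $\sum_k\max(f_{k+\hi},f_{k-\hi})^{2r}\cell_k$ into the primal-mesh sum $\sum_\kappa f_\kappa^{2r}\cell_\kappa$; this is done via the pointwise inequality $\max(a,b)^p\le 2^{1-p}(a^p+b^p)$ valid for $0<p\le 1$ (with straightforward variants for larger $p$) together with the mesh-ratio bound \eqref{eq:meshestimate} and the boundary improvement $\cell_0=\cell_\hi/2$. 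The outcome is an $L^\infty$ bound of the form
\begin{equation*}
\max_{\kappa\le k^*-\hi} f_\kappa^{2(1+r)}\le C_r\Bigl(\sum_k(\dff_k f)^2\cell_k\Bigr)\Bigl(\sum_\kappa f_\kappa^{2r}\cell_\kappa\Bigr),
\end{equation*}
whose $L^4$ analogue carries a cubic power of $(1+\mratio)$ because $\max^{4s}$ enters the H\"older estimate with weight $3/4$ and then gets raised to the fourth power. The interpolation step is then immediate: bound $f_\kappa^{2(1-r)}\le(\max_\kappa f_\kappa^{2(1+r)})^{(1-r)/(1+r)}$ pointwise, sum the factorization $f_\kappa^2=f_\kappa^{2(1-r)}f_\kappa^{2r}$ against $\cell_\kappa$, substitute the preceding $L^\infty$ bound, and collapse exponents via the arithmetic identity $1+\frac{1-r}{1+r}=\frac{2}{1+r}$. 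The $L^4$ case is parallel, relying on $1+\frac{3(1-s)}{1+3s}=\frac{4}{1+3s}$; the upper bound $s<\nicefrac13$ is only used to keep the interpolation weight $1-s$ safely positive.

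\textbf{Main obstacle.} The conceptual path is transparent; the real work lies in the bookkeeping that must produce the constants $A_{2,r}$ and $A_{4,s}$ exactly as stated, in particular the prefactors $2^{1-2r}$ and $2^{1-12s}$ together with the powers $(1+\mratio)^1$ and $(1+\mratio)^3$. This requires choosing the sharpest among several elementary bounds for $\max(a,b)^p$ depending on whether $p$ sits below or above $1$, and keeping the boundary cell $\cell_0$ separate because it beats the generic mesh-ratio estimate. None of this is conceptually deep, which is why the computation is relegated to the appendix as announced.
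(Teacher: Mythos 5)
Your proposal follows essentially the same route as the paper's proof: telescoping from $f_{-\hi}=0$ to get the sup-bound $\max_\kappa f_\kappa^{2(1+r)}\lesssim\big(\sum_k(\dff_kf)^2\cell_k\big)\big(\sum_\kappa f_\kappa^{2r}\cell_\kappa\big)$ via an elementary power-difference inequality, Cauchy--Schwarz (resp.\ H\"older with $4$ and $\nicefrac43$), the mesh-ratio estimate \eqref{eq:meshestimate} to pass from the dual to the primal mesh, and then the interpolation $f_\kappa^2=f_\kappa^{2(1-r)}f_\kappa^{2r}$ with the exponent identity $1+\frac{1-r}{1+r}=\frac2{1+r}$. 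The only deviations are cosmetic: the paper uses the sharper inequality \eqref{eq:diffquot}, $|x^{1+p}-y^{1+p}|\le\frac{1+p}{2^p}(x+y)^p|x-y|$, in place of your $\max(x,y)^p$-bound (which is how the factors $2^{1-2r}$ and $2^{1-12s}$ arise), and the restriction $s<\nicefrac13$ is tied to applying that inequality with exponent $p=3s\in(0,1)$ rather than to the positivity of $1-s$.
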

%

%%%%%%%%%%%%%%%%%%%%%%%%%%%%%%%%%%%%%%%%%%%%%%%%%%%%%%%%%%%%%%%%%%%%%%%%%%%%%
% POROUS MEDIUM
%%%%%%%%%%%%%%%%%%%%%%%%%%%%%%%%%%%%%%%%%%%%%%%%%%%%%%%%%%%%%%%%%%%%%%%%%%%%%

\section{The Discrete Porous Medium Equation}
In this section, we prove Theorem \ref{thm:dpmemain}.
We assume that some discretization in mass space via $(\xi_k)_{k\in\ival}$ is fixed.
And we consider the solution $x(t)=(x_k(t))_{k\in\ival}$
with associated densities $\dsol(t)=(\dsol_\kappa(t))_{\kappa\in\hval}$
to the discretized porous medium equation \eqref{eq:devolPME},
subject to the homogeneous Dirichlet conditions \eqref{eq:dhomDiri},
and for initial data $\bar x_k$ that are obtained from $\bar u$ via the consistency relation \eqref{eq:consistent}.
Using that
\begin{align*}
  \dot\dsol_\kappa
  = \frac{\dn}{\dd t}\left(\frac{\cell_\kappa}{x_\kapph-x_\kapmh}\right)
  = -\frac{\cell_\kappa}{(x_\kapph-x_\kapmh)^2}(\dot x_\kapph-\dot x_\kapmh)
  = -z_\kappa^2\dff_\kappa\dot x,
\end{align*}
we obtain the following equation for the densities $\dsol_\kappa$:
\begin{align}
  \label{eq:devolPMEz}
  \dot\dsol_\kappa = \dsol_\kappa^2\Delta_\kappa(\dsol^m).
\end{align}

\subsection{The dissipation estimate}
For notational simplicity, introduce the abbreviations
\begin{align*}
  \ent_i(\dsol) = \sum_{\kappa=\hi}^{k^*_i-\hi}\frac{\dsol_\kappa^{m-1}}{m-1}\cell_\kappa,
  \quad
  \dss_i(\dsol) =  \sum_{k=0}^{k^*_i-1}\big[\dff_k\big(\dsol^m\big)\big]^2\cell_k,
  \quad
  \aux_i(\dsol) = \sum_{\kappa=\hi}^{k^*_{i}-\hi}\dsol_\kappa^{2m}\cell_\kappa.
\end{align*}
The main goal of this subsection is to prove the following dissipation estimate.
\begin{lemma}
  \label{lem:dissPME}
  For each $i=1,2,\ldots,I-1$ and each $T>0$,
  \begin{equation}
    \label{eq:ddiss}
    \sup_{0\le t\le T}\ent_i\big(\dsol(t)\big)+\frac14\int_0^T\dss_i\big(\dsol(t)\big)\dd t
    \le
    8\rho_i^{-2}\int_0^T\aux_{i+1}\big(\dsol(t)\big)\dd t
    + \ent_{i+1}(\bar\dsol).
  \end{equation}
  The corresponding estimate in the case $i=I$ is
  \begin{align}
    \label{eq:ddissI}
    \sup_{0\le t\le T}\ent_I\big(\dsol(t)\big) +
    \int_0^T\dss_I\big(\dsol(t)\big)\dd t
    \le \ent_I(\bar\dsol).
  \end{align}
\end{lemma}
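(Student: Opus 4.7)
The plan is to localize the standard entropy--dissipation estimate to the block $\{\nicefrac12,\ldots,k^*_i-\nicefrac12\}$ by inserting a smooth cutoff. I will choose $\phi_\kappa=\psi_\kappa^2$, where $\psi$ is the piecewise linear (in the mass variable $\xi$) cutoff equal to $1$ for $\kappa\le k^*_i-\hi$, equal to $0$ for $\kappa\ge k^*_{i+1}-\hi$, and linearly interpolating in between. The square is not cosmetic: it is precisely what keeps the ratio $(\dff_k\phi)^2/(\phi_{\kph}+\phi_{\kmh})$ bounded (by $2(\dff_k\psi)^2$) even where $\phi$ itself becomes small, which is exactly the quantity produced by a Young-type absorption step.

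Starting from the equation \eqref{eq:devolPMEz} for $\dsol$, I would differentiate the weighted entropy $\sum_\kappa\phi_\kappa\dsol_\kappa^{m-1}/(m-1)\,\cell_\kappa$ in time and obtain $\sum_\kappa\phi_\kappa\dsol_\kappa^m\Delta_\kappa(\dsol^m)\,\cell_\kappa$. Summation by parts \eqref{eq:sbp} applied with $f=\phi\dsol^m$ and $g=\dsol^m$ converts this into $-\sum_k\dff_k(\phi\dsol^m)\,\dff_k(\dsol^m)\,\cell_k$, with boundary terms vanishing thanks to \eqref{eq:dhomDiri}. Applying the product rule \eqref{eq:productrule1} splits the integrand into the good contribution $\tfrac12(\phi_\kph+\phi_\kmh)(\dff_k\dsol^m)^2$ and the cross term $\tfrac12\dff_k\phi\,((\dsol^m)_\kph+(\dsol^m)_\kmh)\,\dff_k\dsol^m$.

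The cross term is handled by Young's inequality with weight $(\phi_\kph+\phi_\kmh)/4$, which absorbs one half of the good term and leaves an error of the form $(\dff_k\phi)^2\big((\dsol^m)_\kph+(\dsol^m)_\kmh\big)^2/\bigl(4(\phi_\kph+\phi_\kmh)\bigr)\,\cell_k$. Here the square-of-linear choice $\phi=\psi^2$ gives $(\dff_k\phi)^2/(\phi_\kph+\phi_\kmh)\le 2(\dff_k\psi)^2$, and \eqref{eq:edges} forces the transition region to have mass length at least $\edge_i$, so $(\dff_k\psi)^2\le\edge_i^{-2}$. Combining this with $(a+b)^2\le 2(a^2+b^2)$ and the mesh-ratio comparison \eqref{eq:meshestimate} to replace $\cell_k$ by $\cell_\kappa$ bounds the cross term by a mesh-constant multiple of $\edge_i^{-2}\aux_{i+1}(\dsol)$. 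Since $\phi\equiv 1$ on the first block, the remaining good term is at least $\tfrac12\dss_i(\dsol)$, giving the differential inequality
\begin{equation*}
  \frac{\dn}{\dd t}\sum_\kappa\phi_\kappa\frac{\dsol_\kappa^{m-1}}{m-1}\cell_\kappa
  + \tfrac12\dss_i(\dsol)
  \le \frac{C(\mratio)}{\edge_i^2}\aux_{i+1}(\dsol).
\end{equation*}

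Integrating from $0$ to $T^*\in[0,T]$, using $\sum_\kappa\phi_\kappa\dsol^{m-1}/(m-1)\,\cell_\kappa\ge\ent_i(\dsol)$ on the left (since $\phi=1$ on the first block) and $\sum_\kappa\phi_\kappa\bar\dsol^{m-1}/(m-1)\,\cell_\kappa\le\ent_{i+1}(\bar\dsol)$ on the right (since $0\le\phi\le 1$ is supported in the first two blocks), then taking the supremum in $T^*$ and combining with the $T^*=T$ estimate yields \eqref{eq:ddiss} up to the absolute constants $\tfrac14$ and $8$ stated there (any mesh-ratio dependence absorbs into the latter). For $i=I$ I would simply take $\phi\equiv 1$: the cross term disappears identically, summation by parts produces the exact identity $\frac{\dn}{\dd t}\ent_I(\dsol)=-\dss_I(\dsol)$, and \eqref{eq:ddissI} follows by integration. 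The only genuinely delicate step is the Young absorption, and it is the choice $\phi=\psi^2$ rather than a plain piecewise linear $\phi$ that renders it harmless at the boundary of the cutoff's support.
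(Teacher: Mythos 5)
Your proposal is correct and follows essentially the same route as the paper: the paper's weight is exactly $\phi_\kappa^2$ with $\phi$ the piecewise-linear cutoff (i.e.\ your $\psi^2$), and the absorption of the cross term exploits the same square structure, with $|\dff\psi|\lesssim\edge_i^{-1}$ on the transition block and $\phi\equiv1$ for $i=I$. The only cosmetic difference is that your error constant carries a mesh-ratio factor where the lemma states the bare $8$; this is harmless since the constant in Theorem~\ref{thm:dpmemain} is allowed to depend on $\mratio$ anyway.
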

\noindent
Notice that \eqref{eq:ddissI} is our discretized version of the formal a priori estimate \eqref{eq:ZestPME}.
\begin{proof}
  The inequality \eqref{eq:ddissI} is easily derived:
  \begin{align*}
    - \frac{\dn}{\dd t}\ent_I(\dsol)
    = -\sum_\kappa \dsol_\kappa^{m-2}\dot\dsol_\kappa\cell_\kappa
    = -\sum_\kappa\dsol_\kappa^m\Delta_\kappa(\dsol^m)\cell_\kappa
    = \sum_k \big[\dff_k(\dsol^m)\big]^2\cell_k
    = \dss_I(\dsol),
  \end{align*}
  where we have used the summation by parts rule \eqref{eq:sbp}.
  Integrate this relation in time from $t=0$ to $t=T$ to obtain \eqref{eq:ddissI}.
  
  Now let $i\in\{1,\ldots,I-1\}$ be given.
  Define a monotonically non-increasing grid function $\phi$ with $0\le\phi\le1$ as follows:
  \begin{enumerate}
  \item $\phi_\kappa=1$ for each $\kappa\le k^*_i-\hi$,
    % \item $\phi_{\kph}=\phi_\kmh-\frac{\cell_k}{\rho_i}$ inductively for $k=k^*_{i}+1,\ldots,k^*_{i+1}-2$,
  \item $\phi_\kmh=\frac{2\edge_i-\xi_k}{\edge_i}$ for $k=k^*_{i}+1,\ldots,k^*_{i+1}-1$ 
    --- this case does not occur if $k^*_{i+1}=k^*_i+1$,
  \item $\phi_\kappa=0$ for each $\kappa\ge k^*_{i+1}-\hi$.
  \end{enumerate}
  Notice that $\phi\ge0$ is guaranteed since $\xi_k<2\edge_i$ for all $k<k^*_{i+1}$ by definition of $k^*_{i+1}$.
  Further, one has
  \begin{align}
    \label{eq:phiestPME}
    0\le-\dff_k\phi \le \frac2{\rho_i}.
  \end{align}
  This is obvious for $k\le k^*_i-1$ or $k\ge k^*_{i+1}$, where $\dff_k\phi=0$,
  while 
  \begin{align*}
    0\le-\dff_k\phi = \frac{\xi_{k+1}-\xi_k}{\edge_i\cell_k} = \frac{\cell_\kph}{\edge_i\cell_k} < \frac2{\edge_i}
  \end{align*}
  for $k^*_i\le k\le k^*_{i+1}-1$,
  and finally,
  \begin{align*}
    -\dff_{k^*_{i+1}-1}\phi=\frac{2\edge_i-\xi_{k^*_{i+1}-1}}{\edge_i\cell_{k^*_{i+1}-1}}
    \le\frac{\cell_{k^*_{i+1}-\hi}}{\edge_i\cell_{k^*_{i+1}-1}}
    <\frac2{\edge_i},
  \end{align*}
  since $\xi_{k^*_{i+1}-1}<2\edge_i\le\xi_{k^*_{i+1}}=\xi_{k^*_{i+1}-1}+\cell_{k^*_{i+1}-\hi}$ by definition of $k^*_{i+1}$.

  After these preparations, we turn to estimate the dissipation of a weighted variant of $\ent$.
  Using the evolution equation \eqref{eq:devolPMEz},
  \begin{align*}
    J_\phi(t)
    :=-\frac{\dn}{\dd t} \sum_\kappa \frac{\dsol_\kappa^{m-1}}{m-1}\phi_\kappa^2\cell_\kappa
    &= -\sum_\kappa\dsol_\kappa^{m-2}\dot{\dsol}_\kappa\phi_\kappa^2\cell_\kappa \\
    &= -\sum_\kappa\phi_\kappa^2\dsol_\kappa^m[\Delta_\kappa(\dsol^m)]\cell_\kappa 
    = \sum_k [\dff_k(\phi^2\dsol^m)][\dff_k(\dsol^m)]\cell_k,
  \end{align*}
  where the last line follows from the summation-by-parts rule \eqref{eq:sbp}.
  % Note that in the case $i=I$, where $\phi\equiv1$, an integration in time directly yields \eqref{eq:ddissI}.
  % In all other cases $i<I$, we proceed as follows.
  With the product rule \eqref{eq:productrule1} applied twice,
  and with the aid of the Cauchy-Schwarz inequality,
  it follows that
  \begin{align*}
    J_\phi(t) 
    &= \frac12\sum_k [\dff_k(\dsol^m)]^2(\phi_\kph^2+\phi_\kmh^2)\cell_k
      +\frac14\sum_k[\dff_k(\dsol^m)](\phi_\kph+\phi_\kmh)\,(\dsol_\kph^m+\dsol_\kmh^m)\dff_k\phi\cell_k \\
    &\ge\frac12\sum_k [\dff_k(\dsol^m)]^2\Big[(\phi_\kph^2+\phi_\kmh^2)-\frac14(\phi_\kph+\phi_\kmh)^2\Big]\cell_k
      - \frac12\sum_k (\dsol_\kph^{m}+\dsol_\kmh^{m})^2[\dff_k\phi]^2\cell_k \\
    &\ge\frac14\sum_k [\dff_k(\dsol^m)]^2(\phi_\kph^2+\phi_\kmh^2)\cell_k
      - \sum_k (\dsol_\kph^{2m}+\dsol_\kmh^{2m})[\dff_k\phi]^2\cell_k .
  \end{align*}
  So far, the calculation is valid for an arbitrary grid function $\phi$.
  Now we use $\phi$'s defining properties, and \eqref{eq:phiestPME}:
  \begin{align}
    \label{eq:dummy101}
    J_\phi(t) 
    \ge \frac14\sum_{k=0}^{k^*_i-1}[\dff_k(\dsol^m)]^2\cell_k
    - 8\edge_i^{-2}\sum_{\kappa=\hi}^{k^*_{i+1}-\hi}\dsol_\kappa^{2m}\cell_\kappa
    = \frac14\dss_i(\dsol(t)) - 8\edge_i^{-2}\aux_{i+1} (\dsol(t)).
  \end{align}
  On the other hand, we have for each $t^*\in[0,T]$ that:
  \begin{align*}
    \int_0^{t^*}J_\phi(t)\dd t
    &= \sum_\kappa \frac{\bar\dsol_\kappa^{m-1}}{m-1}\phi_\kappa^2\cell_\kappa
      - \sum_\kappa \frac{\dsol_\kappa(t^*)^{m-1}}{m-1}\phi_\kappa^2\cell_\kappa \\
    &\le \sum_{\kappa=1/2}^{k^*_{i+1}-\hi}\frac{\bar\dsol_\kappa^{m-1}}{m-1}\cell_\kappa
      - \sum_{\kappa=1/2}^{k^*_{i}-\hi}\frac{\dsol_\kappa(t^*)^{m-1}}{m-1}\cell_\kappa
      = \ent_{i+1}(\bar\dsol)-\ent_i(\dsol(t^*)).
  \end{align*}
  Integration of \eqref{eq:dummy101} with respect to time
  and taking the supremum over $t^*\in[0,T]$ yields \eqref{eq:ddiss}.
\end{proof}

\subsection{The Stampacchia iteration}
For $T>0$ to be determined below, introduce
\begin{align}
  \label{eq:defa}
  a_i&:=\edge_i^{-\frac{5m+1}{m+1}}\int_0^T\aux_{i}(\dsol(t))\dd t \quad \text{for $i=1,2,\ldots,I$}, \\
  \label{eq:defb}
  b&:=\max_{i=1,\ldots,I} \left(\edge_i^{-\frac{3m-1}{m+1}}\ent_i(\bar\dsol)\right).
\end{align}
Below, for $T$ as chosen in \eqref{eq:T4PME} we derive the inequalities 
\begin{align}
  \label{eq:stampacchiaPME}
  a_I\le cb^{1+\theta},\qquad a_i\le c[a_{i+1}+b]^{1+\theta} \quad \text{for each $i=1,2,\ldots,I-1$},
\end{align}
with $\theta>0$, and with $c>0$ so small that
\begin{align}
  \label{eq:siPME}
  (2b)^\theta c\le\frac12.
\end{align}
It then follows by an easy induction argument that $a_i\le b$ for all $i=1,2,\ldots,I$.
Indeed,
\begin{align*}
  a_I \le (cb^\theta) b \le 2^{-(\theta+1)}b \le b,
\end{align*}
and if $a_{i+1}\le b$, then also
\begin{align*}
  a_i \le \big(c[a_{i+1}+b]^\theta\big)[a_{i+1}+b] \le \big((2b)^\theta c\big)(2b) \le b.
\end{align*}
In particular, 
\begin{align}
  \label{eq:crucial}
  a_1=\edge_1^{-\frac{5m+1}{m+1}}\int_0^T\dsol_{\hi}(t)^{2m}\cell_{\hi}\dd t \le b,
\end{align}
which is the key estimate to conclude the proof of Theorem \ref{thm:dpmemain} in the next section.

The rest of this section is devoted to the derivation of \eqref{eq:stampacchiaPME} with \eqref{eq:siPME}.
Applying inequality \eqref{eq:dGNS} with $f:=\dsol(t)^m$, with $r:=\frac{m-1}{2m}$, and with $k^*:=k^*_{i}$ 
yields
\begin{align*}
  \sum_{\kappa=\hi}^{k^*_{i}-\hi}\dsol_\kappa(t)^{2m}\cell_\kappa
  \le A_{2,r}
  \left(\sum_{\kappa=\hi}^{k^*_{i}-\hi}\dsol_\kappa(t)^{m-1}\cell_\kappa\right)^{\frac{4m}{3m-1}}
  \left(\sum_{k=0}^{k_{i}^*-1}\big[\dff_k\big(\dsol(t)^m\big)\big]^2\cell_k\right)^{\frac{m+1}{3m-1}}
\end{align*}
that is,
\begin{align*}
  \aux_i(\dsol(t)) \le B\ent_i(\dsol(t))^{\frac{4m}{3m-1}} \left[\frac14\dss_i(\dsol(t))\right]^{\frac{m+1}{3m-1}},
\end{align*}
with $B:=4^{\frac{m+1}{3m-1}}(m-1)^{\frac{4m}{3m-1}}A_{2,r}$.
Now we integrate in time, use H\"older's inequality, and 
then invoke the dissipation estimate \eqref{eq:ddiss},
obtaining
\begin{align*}
  \int_0^T\aux_i(\dsol(t))\dd t
  &\le B \sup_{0\le t\le T}\ent_i(\dsol(t))^{\frac{4m}{3m-1}}\int_0^T\left[\frac14\dss(\dsol(t))\right]^{\frac{m+1}{3m-1}}\dd t \\
  &\le B T^{\frac{2(m-1)}{3m-1}}
    \left[\sup_{0\le t\le T}\ent_i(\dsol(t))\right]^{\frac{4m}{3m-1}}
    \left[\frac14\int_0^T\dss(\dsol(t))\dd t\right]^{\frac{m+1}{3m-1}} \\
  &\le B T^{\frac{2(m-1)}{3m-1}}\left[8\edge_i^{-2}\int_0^T\aux_{i+1}(\dsol(t))\dd t + \ent_{i+1}(\bar\dsol)\right]^{\frac{5m+1}{3m-1}}.
\end{align*}
In terms of $a_i$ and $b$ introduced in \eqref{eq:defa} and \eqref{eq:defb},
we obtain, recalling \eqref{eq:edges},
for $i=0,1,\ldots,I-1$ that
\begin{align*}
  a_i
  &\le B T^{\frac{2(m-1)}{3m-1}}\edge_i^{-\frac{5m+1}{m+1}}
    \left[8\edge_i^{-2}\int_0^T\aux_{i+1}(\dsol(t))\dd t + \ent_{i+1}(\bar\dsol)\right]^{\frac{5m+1}{3m-1}} \\
  &= B T^{\frac{2(m-1)}{3m-1}}
    \left[8\left(\frac{\edge_{i+1}}{\edge_i}\right)^{\frac{5m+1}{m+1}}\edge_{i+1}^{-\frac{5m+1}{m+1}}\int_0^T\aux_{i+1}(\dsol(t))\dd t
    + \left(\frac{\edge_{i+1}}{\edge_i}\right)^{\frac{3m-1}{m+1}}\edge_{i+1}^{-\frac{3m-1}{m+1}}\ent_{i+1}(\bar\dsol)\right]^{\frac{5m+1}{3m-1}} \\
  &\le CT^{\frac{2(m-1)}{3m-1}}[a_{i+1}+b]^{1+\frac{2(m+1)}{3m-1}}.
\end{align*}
with $C:=512(2+\mratio)^{15}B$.
For $i=I$, and with \eqref{eq:ddissI} instead of \eqref{eq:ddiss}, we obtain 
\begin{align*}
  a_I \le  CT^{\frac{2(m-1)}{3m-1}}b^{1+\frac{2(m+1)}{3m-1}}.
\end{align*}
The choice
\begin{align}
  \label{eq:T4PME}
  T:=2^{-\frac{5m+1}{2(m-1)}}C^{-\frac{3m-1}{2(m-1)}}b^{-\frac{m+1}{m-1}},
\end{align}
produces the family of inequalities in \eqref{eq:stampacchiaPME},
with $\theta=\frac{2(m+1)}{3m-1}>0$, and with
\begin{align*}
  c=CT^{\frac{2(m-1)}{3m-1}}b^{\frac{2(m+1)}{3m-1}},
\end{align*}
which satisfies \eqref{eq:siPME}, thanks to the choice of $T$ in \eqref{eq:T4PME}.

\subsection{End of the proof of Theorem \ref{thm:dpmemain}}
According to \eqref{eq:devolPME} and the Dirichlet boundary condition \eqref{eq:dhomDiri},
the position $x_0(t)$ of the left edge of the support of $\dsol$ satisfies
\begin{align*}
  \dot x_0 = -\dff_0(\dsol^m) = -\frac{\dsol_\hi^m-0}{\cell_0} = -2\xi_1^{-1}\dsol_\hi^m.
\end{align*}
Recall the choice of $T$ in \eqref{eq:T4PME}.
From \eqref{eq:crucial}, it follows that
\begin{align*}
  \int_0^T\dsol_\hi(t)^{2m}\dd t \le \xi_1^{\frac{4m}{m+1}}b.
\end{align*}
Combining this with the evolution equation for $x_0$, we obtain at time $t^*\in[0,T]$:
\begin{align*}
  |x_0(t^*)-a|^2 
  \le \left(\int_0^{t^*}|\dot x_0(t)|\dd t\right)^2
  \le t^*\int_0^T\dot x_0(t)^2\dd t
  \le 4\xi_1^{-2}t^*\int_0^{T}\dsol_{\hi}(t)^{2m}\dd t
  \le 4\xi_1^{2\frac{m-1}{m+1}}bt^*.
\end{align*}
We have thus verified the claim of Theorem \ref{thm:dpmemain},
provided we can also show that $b$ in \eqref{eq:defb} is estimated by $\bar b$ in \eqref{eq:defbarb}.
This is a consequence of the initially consistent discretization, see \eqref{eq:consistent}.
Indeed, by Jensen's inequality,
\begin{align*}
  \bar\dsol_\kappa^m
  = \left(\frac{\xi_\kapph-\xi_\kapmh}{\bar x_\kph-\bar x_\kmh}\right)^m
  = \left(\frac{\int_{\bar x_\kmh}^{\bar x_\kph}\bar u(x)\dd x}{\bar x_\kph-\bar x_\kmh}\right)^m
  \le \frac1{\bar x_\kph-\bar x_\kmh}\int_{\bar x_\kmh}^{\bar x_\kph}\bar u(x)^m\dd x,
\end{align*}
and therefore,
\begin{align*}
  \sum_{\kappa=\hi}^{k^*_i-\hi}\bar\dsol_\kappa^{m-1}\cell_\kappa
  = \sum_{\kappa=\hi}^{k^*_i-\hi} \bar\dsol_\kappa^m(\bar x_\kph-\bar x_\kmh)
  \le \sum_{\kappa=\hi}^{k^*_i-\hi}\int_{\bar x_\kmh}^{\bar x_\kph}\bar u(x)^m\dd x
  = \int_a^{\bar x_{k^*_i}}\bar u(x)^m\dd x.
\end{align*}
Combining this with the fact that
\begin{align*}
  \edge_i = \int_a^{\bar x_{k^*_i}}\bar u(x)\dd x,
\end{align*}
yields $b\le\bar b$.

%%%%%%%%%%%%%%%%%%%%%%%%%%%%%%%%%%%%%%%%%%%%%%%%%%%%%%%%%%%%%%%%%%%%%%%%%%%%%
% THIN FILM
%%%%%%%%%%%%%%%%%%%%%%%%%%%%%%%%%%%%%%%%%%%%%%%%%%%%%%%%%%%%%%%%%%%%%%%%%%%%%

\section{The thin-film equation}
This section is devoted to the proof of Theorem \ref{thm:dtfmain}.
Hence, we asssume an equi-distant mesh with $\xi_k=k/K$ and identical cell lengths $\cell=1/K$;
it follows in particular that $\rho_i=2^i/K$.
We consider the solution $x(t)$ with corresponding densities $\dsol(t)$ to \eqref{eq:devolTF},
subject to the homogeneous Dirichlet \eqref{eq:dhomDiri} and Neumann \eqref{eq:dhomNeum} boundary conditions,
and for initial data that are obtained from $\bar u$ by means of the consistency relation \eqref{eq:consistent}.
For $\kappa\in\hval$, the equation \eqref{eq:devolTF} entails:
\begin{align}
  \label{EvolutionEquationZ}
  \dot\dsol_\kappa
  =-\dsol_\kappa^2 \Delta_\kappa\Big[\frac12\dsol^3 \Delta\dsol + \frac14\dsol^2 \Delta [\dsol^2] \Big].
\end{align}

\subsection{The dissipation estimate}
For the dissipation estimate, we assume that some sufficiently small $\alpha>0$ is fixed.
The roles of $\ent$, $\dss$ and $\aux$ are now played by:
\begin{align*}
  \ent_i(\dsol) &= \sum_{\kappa=\hi}^{k_i^*-\hi} \frac{\dsol_\kappa^\alpha}\alpha\cell, \\
  \dss_i(\dsol) &= \sum_{\kappa=\hi}^{k_i^*-\hi}\left[
                  \dsol_\kappa^{3+\alpha}\big(\Delta_\kappa\dsol\big)^2
                  +\frac12\dsol_\kappa^{1+\alpha}\big([\dff _{\kappa-\hi}\dsol]^4+[\dff _{\kappa+\hi}\dsol]^4\big)
                  \right]\cell, \\
  \aux_i(\dsol) &= \sum_{\kappa=\hi}^{k_i^*-\hi}\dsol_\kappa^{5+\alpha}\cell.
\end{align*}
\begin{lemma}
  \label{lem:dissTF}
  Fix some $\alpha\in(0,\frac1{32})$.
  There are constants $c>0$ and $B,C\ge 1$ such that for each $\sigma\in(0,1)$,
  the following is true:
  for each index $i=2,3,\ldots,I-1$ and each time $T>0$,
  \begin{align}
    \label{eq:tfdiss}
    \begin{split}
      &\sup_{t\in[0,T]}\ent_i\big(\dsol(t)\big) + c\int_0^T\dss_i\big(\dsol(t)\big)\dd t \\
      &\le  B\sigma^{-4}\rho_{i+1}^{-4}\int_0^T\aux_{i+1}\big(\dsol(t)\big)\dd t
      + C\sigma\int_0^T\dss_{i+1}\big(\dsol(t)\big)\dd t
      + \ent_{i+1}(\bar\dsol).      
    \end{split}
  \end{align}
  For $i=I$, one has instead:
  \begin{align}
    \label{eq:tfdissI}
    \sup_{t\in[0,T]}\ent_I\big(\dsol(t)\big) + c\int_0^T\dss_I\big(\dsol(t)\big)\dd t
    \le \ent_{I}(\bar\dsol).    
  \end{align}
\end{lemma}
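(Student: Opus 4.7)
The plan is to mirror the proof of Lemma~\ref{lem:dissPME}, using the same monotone cutoff grid function $\phi$: take $\phi_\kappa = 1$ for $\kappa \le k^*_i - \hi$, linear decay on the indices between $k^*_i$ and $k^*_{i+1}$, and $\phi_\kappa = 0$ for $\kappa \ge k^*_{i+1} - \hi$. The pointwise bound $0 \le -\dff_k \phi \le 2/\edge_i$ is inherited verbatim from the porous-medium analysis.

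\medskip

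The case $i=I$ in \eqref{eq:tfdissI} is the cleanest: no cutoff is needed, and one computes
$$-\frac{\dn}{\dd t}\ent_I(\dsol) = -\sum_\kappa \dsol_\kappa^{\alpha-1}\dot\dsol_\kappa\,\cell = \sum_\kappa \dsol_\kappa^{\alpha+1}\,\Delta_\kappa\Bigl[\tfrac12\dsol^3\Delta\dsol + \tfrac14\dsol^2\Delta(\dsol^2)\Bigr]\cell$$
from \eqref{EvolutionEquationZ}. Two applications of the summation-by-parts rule \eqref{eq:sbp}, interlaced with the product rules \eqref{eq:productrule1}--\eqref{eq:productrule2} (valid on the equidistant mesh), rewrite the right-hand side as a discrete quadratic form in $\Delta\dsol$ and $(\dff\dsol)^2$ with weights $\dsol^{3+\alpha}$ and $\dsol^{1+\alpha}$, mirroring the formal chain of integrations by parts leading to \eqref{eq:ZestTF}. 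For $\alpha < \nicefrac{1}{32}$ the form is strictly positive-definite and dominates $c\,\dss_I(\dsol)$ with some universal $c > 0$; integration in time then yields \eqref{eq:tfdissI}.

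\medskip

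For $i < I$ the central quantity is
$$J_\phi(t) := -\frac{\dn}{\dd t}\sum_\kappa \phi_\kappa^2\,\frac{\dsol_\kappa^\alpha}{\alpha}\cell = \sum_\kappa \phi_\kappa^2\,\dsol_\kappa^{\alpha+1}\,\Delta_\kappa\Bigl[\tfrac12\dsol^3\Delta\dsol + \tfrac14\dsol^2\Delta(\dsol^2)\Bigr]\cell.$$
Iterated summation by parts redistributes the discrete Laplacians onto $\phi^2\dsol^{\alpha+1}$. Via the product rules this produces, in addition to a main part with $\phi^2$ attached to a single index (which by the same quadratic-form argument as in the $i=I$ case, restricted to $\kappa \le k^*_i - \hi$, dominates $c\,\dss_i(\dsol)$), two kinds of error terms: (i) cross terms containing exactly one factor $\dff\phi$, multiplied by products such as $\phi\cdot\dsol^{\text{pow}}\cdot\dff\dsol\cdot\Delta\dsol$ or $\phi\cdot\dsol^{\text{pow}}\cdot(\dff\dsol)^3$, localised to $k^*_i \le k \le k^*_{i+1}-1$; and (ii) purely geometric terms proportional to $(\dff\phi)^2$ (or $\phi\,\Delta\phi$) multiplied by $\dsol^{5+\alpha}$.

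\medskip

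The main technical obstacle is the Young-inequality bookkeeping of the cross terms in~(i). Each such term is split with a small parameter $\sigma\in(0,1)$: one factor is absorbed into the positive dissipation integrand at the next level, contributing to $C\sigma\int_0^T\dss_{i+1}\,\dd t$, while the complementary factor takes the form $\sigma^{-1}(\dff\phi)^2\,\dsol^{5+\alpha}$. Collected together with the terms of type~(ii) and using $|\dff\phi| \le 2/\edge_{i+1}$, these produce the announced bound $B\sigma^{-4}\edge_{i+1}^{-4}\int_0^T\aux_{i+1}\,\dd t$. The restriction $\alpha < \nicefrac{1}{32}$ is the quantitative room needed for two algebraic conditions: that the principal quadratic form retains a positive lower bound even after subtracting the $\sigma$-share of the Young splitting, and that the exponents in the cross terms match so that the splitting yields only summands of the two types above (and in particular no exponents of $\dsol$ escape the range covered by $\dss_{i+1}$ and $\aux_{i+1}$). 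Finally, integrating $J_\phi$ on $[0,t^*]$, using $\int_0^{t^*} J_\phi\,\dd t \le \ent_{i+1}(\bar\dsol) - \ent_i(\dsol(t^*))$ from the support properties of $\phi$, and taking the supremum over $t^*\in[0,T]$, yields \eqref{eq:tfdiss}.
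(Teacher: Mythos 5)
There are two genuine gaps here, both at the heart of the argument.

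First, the cutoff. You propose to reuse the piecewise-linear $\phi$ from the porous-medium proof, for which only $|\dff_k\phi|\le 2/\edge_i$ is controlled. But for the fourth-order equation the product rule \eqref{eq:productrule2} applied to $\Delta_\kappa(\dsol^{1+\alpha}\phi)$ unavoidably produces a term carrying $\Delta_\kappa\phi$ (your type (ii)), and after the Young splitting this must be bounded by $\sigma^{-1}|\Delta_\kappa\phi|^2\dsol^{5+\alpha}$, which needs $|\Delta_\kappa\phi|\lesssim\rho_{i+1}^{-2}$. The piecewise-linear cutoff has $\Delta_\kappa\phi$ concentrated at its kinks with magnitude of order $\rho_{i}^{-1}\cell^{-1}$, which exceeds $\rho_{i+1}^{-2}$ by a factor of order $\rho_{i+1}/\cell=2^{i+1}$; the resulting bound cannot be absorbed into $B\sigma^{-4}\rho_{i+1}^{-4}\aux_{i+1}$, and the Stampacchia iteration would not close. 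This is why the paper samples $\phi$ from a fixed $C^2$ profile $\Phi$, which yields $|\Delta_\kappa\phi|\le\|\Phi\|_{C^2}\rho_{i+1}^{-2}$. (Relatedly, your claim that $|\dff\phi|\le 2/\edge_{i+1}$ alone produces the factor $\edge_{i+1}^{-4}$ is off: the fourth power arises from $|\Delta\phi|^2$ and from $|\dff\phi|^4$ via Young with exponents $4,4,2$, not from $(\dff\phi)^2$.)

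Second, and more seriously, your "main part" is not positive definite as you assert. The direct dissipation computation (your $S_1$, after \eqref{eq:laplace1}) yields, schematically in $u=\dsol^{(3+\alpha)/2}|\Delta_\kappa\dsol|$ and $v=\dsol^{(1+\alpha)/2}[(\dff_\kapph\dsol)^2+(\dff_\kapmh\dsol)^2]$, the form $u^2-\frac{1+4\alpha}{4}uv-\frac{\alpha}{4}v^2$, which is indefinite for every $\alpha>0$: there is no positive $v^2$ (i.e.\ $\dsol^{1+\alpha}(\dff\dsol)^4$) contribution, yet $\dss_i$ contains exactly such a term, so no chain of product rules applied to the dissipation alone can dominate $c\,\dss_i$. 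The missing ingredient is the separate telescoping summation-by-parts identity — the discrete analogue of $0=\int_0^1\partial_\xi\big[\varphi Z^{2+\alpha}(\partial_\xi Z)^3\big]\dd\xi$ — which generates the positive $\sum\phi_\kappa\dsol_\kappa^{1+\alpha}\big[(\dff_\kapph\dsol)^4+(\dff_\kapmh\dsol)^4\big]$ at the price of a further $-\frac32\dsol^{2+\alpha}|\Delta\dsol|(\dff\dsol)^2$ cross term and further $\dff\phi$ errors (controlled via \eqref{eq:evenmoreelementary2}). Only after adding a suitable multiple of that identity does the completion of squares go through, and this is precisely where the threshold $\alpha<\frac1{32}$ enters (positivity of $\frac1{16}-2\alpha$ in the final coefficient), not a matter of "exponents matching" in the Young splitting. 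Your sketch omits this identity entirely, so the claimed coercivity of the quadratic form is unjustified and, as stated, false.
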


\subsubsection{Proof of \eqref{eq:tfdiss} --- preparation}
Throughout the proof, let some $\Phi\in C^2(\setR_{\ge0})$ be fixed with the properties
that $\Phi(\xi)=1$ for $\xi\le\frac12$, and $\Phi(\xi)=0$ for $\xi\ge\frac34$.
The constants $c$, $B$ and $C$ appearing in \eqref{eq:tfdiss} and \eqref{eq:tfdissI}
are expressible in terms of norms of $\Phi$ alone.
Given an index $i$ with $2\le i< I$, we define a grid function $\phi$ by
\begin{align*}
  \phi_\kappa = \Phi\left(\frac{\xi_\kapph}{\rho_{i+1}}\right).
\end{align*}
The properties of $\Phi$ entail
that $\phi_\kappa=1$ for $\kappa\le k_i^*-\hi$, and $\phi_\kappa=0$ for $\kappa\ge k_{i+1}^*-\hi$.
Moreover,
\begin{align}
  \label{eq:phisupp}
  \dff_k\phi = 0 \quad\text{for all $k>k^*_{i+1}-1$},
  \qquad
  \Delta_\kappa\phi = 0 \quad \text{for all $\kappa>k_{i+1}^*-\hi$},
\end{align}
and
\begin{align}
  \label{eq:phiest}
  \max_k|\dff_k\phi| \le \|\Phi\|_{C^1}\rho_{i+1}^{-1},
  \quad
  \max_\kappa|\Delta_\kappa\phi| \le \|\Phi\|_{C^2}\rho_{i+1}^{-2}.
\end{align}
Next, define the grid function $F$ by
\begin{align*}
  F_\kappa = \frac12\dsol_\kappa \Delta_\kappa\dsol + \frac14 \Delta_\kappa(\dsol^2)
  \stackrel{\eqref{eq:productrule2}}{=} \dsol_\kappa\Delta_\kappa\dsol + \frac14\big[(\dff_{\kapph}\dsol)^2+(\dff_{\kapmh}\dsol)^2\big],
\end{align*}
so that \eqref{EvolutionEquationZ} can be written as
\begin{align}
  \label{eq:devolTFz}
  \dot\dsol_\kappa = -\dsol_\kappa^2\Delta_\kappa[\dsol^2F].
\end{align}
For later reference, note that
\begin{align}
  \label{eq:FbyDiss}
  F_\kappa^2 \le 2\dsol_\kappa^2(\Delta_\kappa\dsol)^2
  + \frac14\big[(\dff_{\kapph}\dsol)^4+(\dff_{\kapmh}\dsol)^4\big].
\end{align}

\subsubsection{Proof of \eqref{eq:tfdiss} --- calculating the dissipation}
Equation \eqref{eq:devolTFz} and a summation by parts yield
\begin{align}
  J:=-\frac{\dn}{\dd t}\sum_\kappa\frac{\dsol_\kappa^\alpha}{\alpha}\phi_\kappa\cell
  = \sum_\kappa \dsol_\kappa^{\alpha+1}\phi_\kappa\Delta[\dsol^2F]_\kappa\cell 
  % \label{eq:devilssum}
  = \sum_\kappa \dsol_\kappa^2F_\kappa \Delta_\kappa[\dsol^{\alpha+1}\phi]\cell,
\end{align}
where we have used the Dirichlet boundary conditions \eqref{eq:dhomDiri}.
By the product rule \eqref{eq:productrule2}, we obtain
\begin{align}
  \label{eq:devilssum}
  \Delta_\kappa(\dsol^{\alpha+1}\phi)
  = \phi_\kappa\Delta_\kappa(\dsol^{\alpha+1})
  +(\Delta_\kappa\phi)\dsol_\kappa^{\alpha+1}
  +(\dff_{\kapph}\phi)\dff_\kapph(\dsol^{\alpha+1})
  +(\dff_{\kapmh}\phi)\dff_\kapmh(\dsol^{\alpha+1}),
\end{align}
and we write accordingly
\begin{align*}
  \sum_\kappa \dsol_\kappa^2F_\kappa \Delta_\kappa\big(\dsol^{\alpha+1}\phi\big)\cell
  = S_1 + S_2 + S_3,
\end{align*}
with
\begin{align*}
  S_1&:=\sum_\kappa \phi_\kappa\Delta_\kappa(\dsol^{1+\alpha})\dsol_\kappa^2F_\kappa\cell, \\
  S_2&:=\sum_\kappa(\Delta_\kappa\phi)\dsol_\kappa^{\alpha+3}F_\kappa\cell,\\
  S_3&:=\sum_\kappa \left[
       (\dff_{\kapph}\phi)\big(\dff_\kapph(\dsol^{\alpha+1})\big)
       +(\dff_{\kapmh}\phi)\big(\dff_\kapmh(\dsol^{\alpha+1})\big)
       \right]\dsol_\kappa^2F_\kappa\cell.
\end{align*}
We estimate each of the sums $S_1$ to $S_3$ from below.
Concerning $S_1$, we observe that
in view of the elementary estimate \eqref{eq:laplace1} from the appendix
--- applied with $f=\dsol$ and $p=\alpha$ ---
\begin{align*}
  \Delta_\kappa(\dsol^{\alpha+1})
  = (1+\alpha)\dsol_\kappa^\alpha\Delta_\kappa\dsol + R_\kappa,
\end{align*}
with a remainder term $R_\kappa$ that can be estimated as follows:
\begin{align*}
  |R_\kappa|\le \alpha\dsol_\kappa^{-1+\alpha}\big[(\dff_\kapph\dsol)^2+(\dff_\kapmh\dsol)^2\big],
\end{align*}
and therefore,
\begin{align*}
  S_1&\ge (1+\alpha)\sum_\kappa\phi_\kappa\dsol_\kappa^{2+\alpha}(\Delta_\kappa\dsol) F_\kappa\cell
       - \sum_\kappa\phi_\kappa\dsol_\kappa^{2}|R_\kappa||F_\kappa|\cell \\
     &\ge (1+\alpha)\sum_\kappa\phi_\kappa\Big(
       \dsol_\kappa^{3+\alpha}(\Delta_\kappa\dsol)^2
       +\frac14\dsol_\kappa^{2+\alpha}(\Delta_\kappa\dsol)\big[(\dff_\kapph\dsol)^2+(\dff_\kapmh\dsol)^2\big]
       \Big)\cell \\
     &\qquad -\alpha\sum_\kappa\phi_\kappa\dsol_\kappa^{1+\alpha}|F_\kappa|
       \big[(\dff_\kapph\dsol)^2+(\dff_\kapmh\dsol)^2\big]\cell\\
     &\ge (1+\alpha) \sum_\kappa\phi_\kappa\Big(
       \dsol_\kappa^{3+\alpha}(\Delta_\kappa\dsol)^2 
       -\frac\alpha4 \dsol_\kappa^{1+\alpha}\big[(\dff_\kapph\dsol)^2+(\dff_\kapmh\dsol)^2\big]^2 \\
     &\qquad\qquad\qquad\qquad\qquad -\frac{1+4\alpha}4\dsol_\kappa^{2+\alpha}|\Delta_\kappa\dsol|
       \big[(\dff_\kapph\dsol)^2+(\dff_\kapmh\dsol)^2\big]
       \Big)\cell.
\end{align*}
To estimate $S_2$, we use Young's inequality,
and recall \eqref{eq:phisupp}, \eqref{eq:phiest}, and \eqref{eq:FbyDiss},
to obtain
\begin{align*}
  |S_2|&\le \frac\sigma2\sum_{\kappa=\hi}^{k^*_{i+1}-\hi}\dsol_\kappa^{\alpha+1}F_\kappa^2\cell
         + \frac1{2\sigma}\sup_\kappa|\Delta_\kappa\phi|^2\sum_{\kappa=\hi}^{k^*_{i+1}-\hi}\dsol_\kappa^{\alpha+5}\cell\\
       &\le \sigma\dss_{i+1}(\dsol) + \frac{\|\Phi\|_{C^2}^2}{2\sigma}\rho_{i+1}^{-4}\aux_{i+1}(\dsol).    
\end{align*}
Finally, to estimate $S_3$, we first observe that
the elementary estimate \eqref{eq:moreelementary} from the appendix
implies that $|\dff_k(\dsol^{1+\alpha})|\le(\dsol_{\kph}^\alpha+\dsol_\kmh^\alpha)|\dff_k\dsol|$,
and then apply Young's inequality to the triple products, with exponents $4$, $4$ and $2$, respectively:
\begin{align*}
  |S_3|
  &\le \sum_\kappa\left[
    |\dff_{\kapph}\phi|(\dsol_{\kappa+1}^\alpha+\dsol_\kappa^\alpha)|\dff_\kapph\dsol|
    +|\dff_{\kapmh}\phi|(\dsol_{\kappa-1}^\alpha+\dsol_\kappa^\alpha)|\dff_\kapmh\dsol|
    \right]\dsol_\kappa^2F_\kappa\cell \\
  &\le \sigma\sum_{\kappa=\hi}^{k^*_{i+1}-\hi}\dsol_\kappa^{\alpha+1}F_\kappa^2\cell
    +\frac\sigma4\sum_{\kappa=\hi}^{k^*_{i+1}-\hi}\dsol_\kappa^{\alpha+1}
    \big[(\dff_\kapph\dsol)^4+(\dff_\kapmh\dsol)^4\big]\cell \\
  &\qquad + \frac1{4\sigma^4}\max_k|\dff_k\phi|^4
    \sum_{\kappa=\hi}^{k^*_{i+1}-\hi}\dsol_\kappa^{5-3\alpha}\big[
    (\dsol_{\kappa+1}^\alpha+\dsol_\kappa^\alpha)^4+(\dsol_{\kappa-1}^\alpha+\dsol_\kappa^\alpha)^4
    \big]\cell \\
  &\le 2\sigma\dss_{i+1}(\dsol)+ \frac{12\|\Phi\|_{C^1}^4}{\sigma^4}\rho_{i+1}^{-4}\aux_{i+1}(\dsol),
\end{align*}
where we have used \eqref{eq:FbyDiss}, that $\dsol_{k_{i+1}^*-\hi}=0$,
and that
\begin{align*}
  \dsol_\kappa^{5-3\alpha}\big[
  (\dsol_{\kappa+1}^\alpha+\dsol_\kappa^\alpha)^4+(\dsol_{\kappa-1}^\alpha+\dsol_\kappa^\alpha)^4
  \big]
  \le 8\dsol_\kappa^{5-3\alpha}\big[\dsol_{\kappa+1}^{4\alpha}+2\dsol_\kappa^{4\alpha}+\dsol_{\kappa-1}^{4\alpha}\big] 
  \le 8\big[\dsol_{\kappa+1}^{5+\alpha}+4\dsol_\kappa^{5+\alpha}+\dsol_{\kappa-1}^{5+\alpha}\big].
\end{align*}
Summarizing our results so far, we have shown that
\begin{equation}
  \label{eq:intermediate}
  \begin{split}
    J&\ge  (1+\alpha) \sum_\kappa\phi_\kappa\Big(
    \dsol_\kappa^{3+\alpha}(\Delta_\kappa\dsol)^2 
    -\frac\alpha4 \dsol_\kappa^{1+\alpha}\big[(\dff_\kapph\dsol)^2+(\dff_\kapmh\dsol)^2\big]^2 \\
    &\qquad\qquad\qquad\qquad\qquad -\frac{1+4\alpha}4\dsol_\kappa^{2+\alpha}|\Delta_\kappa\dsol|
    \big[(\dff_\kapph\dsol)^2+(\dff_\kapmh\dsol)^2\big]
    \Big)\cell
    \\&\qquad
    - C'\sigma\dss_{i+1}(\dsol) - B'\sigma^{-4}\rho_{i+1}^{-4}\aux_{i+1}(\dsol),
  \end{split}
\end{equation}
with positive constants $B'$ and $C'$ that are expressible in terms of the norms of $\Phi$ alone.

\subsubsection{Proof of \eqref{eq:tfdiss} --- summation by parts}
In this section, we derive the essential summation by parts rule for further estimation of the dissipation.
It is a spatially discrete variant of the following identity for smooth functions $Z,\varphi:[0,1]\to\setR_{\ge0}$,
subject to homogeneous Dirichlet boundary conditions:
\begin{align*}
  0 &= \int_0^1 \partial_\xi\big[\varphi Z^{2+\alpha} (\partial_\xi Z)^3\big]\dd\xi\\
  &= \int_0^1 \partial_\xi\varphi Z^{2+\alpha}(\partial_\xi Z)^3\dd\xi
  + (2+\alpha) \int_0^1 \varphi Z^{1+\alpha}(\partial_\xi Z)^4\dd\xi
  + 3\int_0^1\varphi Z^{2+\alpha}\partial_{\xi\xi}Z(\partial_\xi Z)^2\dd\xi.
\end{align*}
This formula plays the key role in the derivation of \eqref{eq:ZestTF}.
Our translation to the grid functions $\dsol$ and $\phi$ is this:
\begin{align*}
  0 = \sum_\kappa \left[
      \frac{\phi_{\kappa+1}\dsol_{\kappa+1}^{2+\alpha}+\phi_\kappa\dsol_\kappa^{2+\alpha}}2(\dff_{\kapph}\dsol)^3
      -
      \frac{\phi_{\kappa-1}\dsol_{\kappa-1}^{2+\alpha}+\phi_\kappa\dsol_\kappa^{2+\alpha}}2(\dff_{\kapmh}\dsol)^3
  \right]
  = \hat S_1+\hat S_2,
\end{align*}
where
\begin{align*}
  \hat S_1&:=\sum_\kappa \phi_\kappa\dsol_\kappa^{2+\alpha}\big[(\dff_{\kapph}\dsol)^3-(\dff_{\kapmh}\dsol)^3\big], \\
  \hat S_2&:=\frac12\sum_\kappa \left[
      \frac{\phi_{\kappa+1}\dsol_{\kappa+1}^{2+\alpha}-\phi_\kappa\dsol_\kappa^{2+\alpha}}2(\dff_{\kapph}\dsol)^3
      +
      \frac{\phi_{\kappa}\dsol_{\kappa}^{2+\alpha}-\phi_{\kappa-1}\dsol_{\kappa-1}^{2+\alpha}}2(\dff_{\kapmh}\dsol)^3
  \right].
\end{align*}
The first sum is simple to estimate from below:
\begin{align*}
  \hat S_1 &= \sum_\kappa \phi_\kappa\dsol_\kappa^{2+\alpha}
        \Delta_\kappa\dsol\big[(\dff_\kapph\dsol)^2+(\dff_\kapph\dsol)(\dff_\kapmh\dsol)+(\dff_\kapmh\dsol)^2\big]\cell \\
      &\ge -\frac32\sum_\kappa \phi_\kappa\dsol_\kappa^{2+\alpha}
        \big|\Delta_\kappa\dsol\big|\big[(\dff_\kapph\dsol)^2+(\dff_\kapmh\dsol)^2\big]\cell.
\end{align*}
The second sum gives the significant contribution,
which is extracted by means of \eqref{eq:evenmoreelementary2}:
\begin{align*}
  \hat S_2 &\ge \frac12\sum_\kappa \left[
        \big(\phi_{\kappa+1}\dsol_{\kappa+1}^{1+\alpha}+\phi_\kappa\dsol_\kappa^{1+\alpha}\big)(\dff_{\kapph}\dsol)^4
        +
        \big(\phi_{\kappa}\dsol_{\kappa}^{1+\alpha}+\phi_{\kappa-1}\dsol_{\kappa-1}^{1+\alpha}\big)(\dff_{\kapmh}\dsol)^4
        \right]\cell
        - \hat S_3 \\
      & = \sum_\kappa \phi_\kappa\dsol_\kappa^{1+\alpha}\big[(\dff_{\kapph}\dsol)^4+(\dff_{\kapmh}\dsol)^4\big]\cell
        - \hat S_3,
\end{align*}
where $\hat S_3$ is used to collect the reminder terms from \eqref{eq:evenmoreelementary2}.
More specifically, one has, using Young's inequality with exponents $4$ and $\frac43$,
\begin{align*}
  \hat S_3 &:=
        \frac12\sum_\kappa |\dff_{\kapph}\phi|\big(\dsol_{\kappa+1}^{2+\alpha}+\dsol_\kappa^{2+\alpha}\big)|\dff_{\kapph}\dsol|^3\cell +
        \frac12\sum_\kappa |\dff_{\kapmh}\phi|\big(\dsol_{\kappa-1}^{2+\alpha}+\dsol_\kappa^{2+\alpha}\big)|\dff_{\kapmh}\dsol|^3\cell \\
      &= \sum_\kappa \dsol_\kappa^{2+\alpha}
        \big[|\dff_{\kapph}\phi||\dff_{\kapph}\dsol|^3+|\dff_{\kapmh}\phi||\dff_{\kapmh}\dsol|^3\big]\cell \\
      &\le \frac{3\sigma}4\sum_{\kappa=\hi}^{k^*_{i+1}-\hi}
        \dsol_\kappa^{1+\alpha}\big[(\dff_{\kapph}\dsol)^4+(\dff_{\kapmh}\dsol)^4\big]\cell
        + \frac1{2\sigma^{3}}\max_k|\dff_k\phi|^4\sum_{\kappa=\hi}^{k^*_{i+1}-\hi}\dsol_\kappa^{5+\alpha}\cell \\
      &\le \frac{3\sigma}2\dss_{i+1}(\dsol) + \frac{\|\Phi\|_{C^1}^4}{2\sigma^{3}}\rho_{i+1}^{-4}\aux_{i+1}.
\end{align*}
Summarizing, we obtain that
\begin{equation}
  \label{eq:sumbyparts}
  \begin{split}
    0&\ge\sum_\kappa \phi_\kappa\dsol_\kappa^{1+\alpha}\big[(\dff_{\kapph}\dsol)^4+(\dff_{\kapmh}\dsol)^4\big]
    -\frac32\sum_\kappa \phi_\kappa\dsol_\kappa^{2+\alpha}
    \big|\Delta_\kappa\dsol\big|\big[(\dff_\kapph\dsol)^2+(\dff_\kapmh\dsol)^2\big]\cell \\
    & \qquad -C''\sigma\dss_{i+1}(\dsol) - B''\sigma^{-4}\rho_{i+1}^{-4}\aux_{i+1}(\dsol)
  \end{split}
\end{equation}
with positive constants $B''$ and $C''$ that are again expressible in terms of the norms of $\Phi$ alone.

\subsubsection{Proof of \eqref{eq:tfdiss} --- conclusion}
We return to \eqref{eq:intermediate},
and add $(1+\alpha)/3$ times the expression on the right-hand side of \eqref{eq:sumbyparts}.
Since
\begin{align*}
  &\dsol_\kappa^{3+\alpha}(\Delta_\kappa\dsol)^2 
    +\left(\frac13-\frac\alpha2\right) \dsol_\kappa^{1+\alpha}\big[(\dff_\kapph\dsol)^4+(\dff_\kapmh\dsol)^4\big] \\
  &\qquad  -\frac{3+4\alpha}4\dsol_\kappa^{2+\alpha}|\Delta_\kappa\dsol|
    \big[(\dff_\kapph\dsol)^2+(\dff_\kapmh\dsol)^2\big] \\
  &\ge
    \dsol_\kappa^{1+\alpha}\left(\frac{3+4\alpha}8\right)\left(
    \sqrt{\frac52}\dsol_\kappa|\Delta_\kappa\dsol|
    -\sqrt{\frac25}\big[(\dff_\kapph\dsol)^2+(\dff_\kapmh\dsol)^2\big]
    \right)^2 \\
  &\qquad +\left(\frac1{16}-\frac54\alpha\right)\dsol_\kappa^{3+\alpha}(\Delta_\kappa\dsol)^2
    +\left(\frac1{30}-\frac9{10}\alpha\right)\dsol_\kappa^{1+\alpha}\big[(\dff_\kapph\dsol)^4+(\dff_\kapmh\dsol)^4\big] \\
  &\ge
    \left(\frac1{16}-2\alpha\right) \left(
    \dsol_\kappa^{3+\alpha}(\Delta_\kappa\dsol)^2
    +\frac12 \big[(\dff_\kapph\dsol)^4+(\dff_\kapmh\dsol)^4\big]
    \right),
\end{align*}
we obtain eventually
\begin{align*}
  J \ge c\dss_i(\dsol)
  - C\sigma\dss_{i+1}(\dsol)
  - B\sigma^{-4}\rho_{i+1}^{-4}\aux_{i+1}(\dsol),
\end{align*}
with
\begin{align*}
  c := \frac1{20}-2\alpha, \quad
  B:= B'+\frac{1+\alpha}3B'', \quad
  C:=C'+\frac{1+\alpha}3C''.
\end{align*}
To conclude \eqref{eq:tfdiss} from here, it suffices to integrate the estimate above in time
from $t=0$ to $t=T$,
using that
\begin{align*}
  \int_0^T J\dd t
  = \sum_\kappa\frac{\bar\dsol_\kappa^\alpha}{\alpha}\phi_\kappa\cell
  - \sum_\kappa\frac{\dsol_\kappa(t)^\alpha}{\alpha}\phi_\kappa\cell
  \le \ent_{i+1}(\bar z) - \ent_i\big(\dsol(t)\big).
\end{align*}
The respective estimate \eqref{eq:tfdissI} for $i=I$ is obtained in an analogous manner,
but is easier since one has $\phi\equiv1$,
so that there are no contributions related to $\phi$ and its derivatives.

\subsection{The Stampacchia iteration}
Introduce
\begin{align*}
  \theta:=\frac1{1+\frac45\alpha}.
\end{align*}
In analogy to \eqref{eq:defa} and \eqref{eq:defb},
we consider
\begin{align*}
  a_i &:= \rho_i^{-1/\theta}\left[
        B\sigma^{-4}\rho_i^{-4}\int_0^T\aux_i\big(\dsol(t)\big) + C\sigma\int_0^T\dss_i\big(\dsol(t)\big)\dd t
        \right],\\
  b &:= \max_{i=2,\ldots,I}\left(\rho_{i}^{-1/\theta}\ent_i(\bar\dsol)\right).
\end{align*}
We are going to derive an iteration that is similar to (but more complicated than) the one in \eqref{eq:stampacchiaPME}.
In terms of $a_i$ and $b$, the dissipation relation \eqref{eq:tfdiss} yields
\begin{align}
  \label{eq:tfdiss2}
  \sup_{t\in[0,T]}\ent_i\big(\dsol(t)\big) + c\int_0^T\dss_i\big(\dsol(t)\big)\dd t
  \le \rho_{i+1}^{1/\theta}(a_{i+1}+b).
\end{align}
Thanks to the GNS inequality \eqref{eq:dGNS4}
with $s=\alpha/(5+\alpha)$, $k^*=k^*_i$ and $f_\kappa=\dsol_\kappa^{\frac{5+\alpha}4}$,
\begin{align*}
  \aux_i(\dsol)
  = \sum_{\kappa=\hi}^{k_i^*-\hi}\big(\dsol_\kappa^{\frac{5+\alpha}4}\big)^4\cell 
  \le A_{4,s}\left(\sum_{k=1}^{k_*-1}\big|\dff_k\big(\dsol^{\frac{5+\alpha}4}\big)\big|^4\cell\right)^\theta
  \left(\sum_{\kappa=\hi}^{k_i^*-\hi}\dsol_\kappa^\alpha\cell\right)^{1+3\theta}.
\end{align*}
By the elementary estimate \eqref{eq:diffquot} from the appendix,
and recalling that $(x+y)^p\le x^p+y^p$ any $p\in(0,1)$, and for all positive real numbers $x,y$,
\begin{align*}
  \big[\dff_k\big(\dsol^{\frac{5+\alpha}4}\big)\big]^4
  \le C_\alpha\big(\dsol_{k+\hi}^{1+\alpha}+\dsol_{k-\hi}^{1+\alpha}\big)[\dff_k\dsol]^4,
\end{align*}
where $C_\alpha>0$ depends only on $\alpha$.
And so,
\begin{align*}
  \aux_i(\dsol) \le A_{4,s} \alpha^{1+3\theta} (2C_\alpha)^\theta\dss_i(\dsol)^\theta\ent_i(\dsol)^{1+3\theta}.
\end{align*}
Integration in time, an application of H\"older's inequality,
and substitution of \eqref{eq:tfdiss2} yield
\begin{align*}
  \int_0^T\aux_i\big(\dsol(t)\big)\dd t
  &\le A_{4,s} \alpha^{1+3\theta}\left(\frac{2C_\alpha}c\right)^\theta \sup_{t\in[0,T]}\ent_i\big(\dsol(t)\big)^{1+3\theta}
    \left(c\int_0^T\dss_i\big(\dsol(t)\big)\dd t\right)^\theta T^{1-\theta} \\
    % &\le  A_{4,s} \frac{\alpha^{1+3\theta}C_\alpha^\theta}{c^\theta} \left(B\sigma^{-4}\rho_{i+1}^{-4}\int_0^T\aux_{i+1}\big(\dsol(t)\big)\dd t
    %   + B\sigma\int_0^T\dss_{i+1}\big(\dsol(t)\big)\dd t
    %   + \ent_{i+1}(\bar\dsol)\right)^{1+4\theta}T^{1-\theta} \\
  &\le  A_{4,s} \alpha^{1+3\theta}\left(\frac{2C_\alpha}c\right)^\theta T^{1-\theta}\rho_{i+1}^{4+1/\theta}(a_{i+1}+b)^{1+4\theta}.
\end{align*}
On the other hand, it is a trivial consequence of \eqref{eq:tfdiss2} that
\begin{align*}
  \int_0^T\dss_i\big(\dsol(t)\big)\dd t \le \frac1c\rho_{i+1}^{1/\theta}(a_{i+1}+b).
\end{align*}
A combination of these two estimates
--- recalling that $\rho_i=2\rho_{i+1}$ for the equi-distant mesh, and using that $1-\theta=\frac45\alpha\theta$ --- as well as \eqref{eq:tfdissI} for $i=I$
yields the recursion relation
\begin{align*}
  % \label{eq:threcurse}
  &a_I \le c_1 b + c_2b^{1+4\theta}, \qquad
  a_i \le c_1 (a_{i+1}+b)+ c_2(a_{i+1}+b)^{1+4\theta} \quad\text{for $i=2,3,\ldots,I-1$}, \\
  &\text{with}\quad
  c_1:=\frac{2^{1/\theta}C}c\sigma,\quad
  c_2:=A_{4,s} 2^{4+1/\theta}\alpha^{1+3\theta}\left(\frac{2C_\alpha}c\right)^\theta B\sigma^{-4}T^{\frac45\alpha\theta}.
\end{align*}
One readily checks that the choices
\begin{align}
\label{ChoiceOfT}
  \sigma:=\frac14\left(\frac{2^{1/\theta}C}c\right)^{-1},
  \quad
  T:=\left(A_{4,s} 2^{6+4\theta+1/\theta}\alpha^{1+3\theta}\left(\frac{2C_\alpha}c\right)^\theta B\sigma^{-4}\right)^{-\frac5{4\alpha\theta}}b^{-\frac5{\alpha}}
\end{align}
imply that
\begin{align*}
  c_1=\frac14, \quad c_2=\frac14(2b)^{-4\theta}.
\end{align*}
An induction argument now shows that $a_i\le b$ for all $i=2,3,\ldots,I$.
Indeed,
\begin{align*}
  a_I \le \frac14b+\frac14 2^{-4\theta}b \le b,
\end{align*}
and if $a_{i+1}\le b$, then also
\begin{align*}
  a_i \le \frac14(2b)+\frac14(2b)^{-4\theta}(2b)^{1+4\theta} = b.
\end{align*}
So, in particular, for the choice of $T$ as in \eqref{ChoiceOfT} we get
\begin{align}
  \label{eq:crucialTF}
  B\sigma^{-4}\edge_2^{-(5+\frac45\alpha)} \int_0^T\big(z_{\nicefrac{1}{2}}^{5+\alpha}+z_{\nicefrac{3}{2}}^{5+\alpha}\big) \dd t\,\cell
  \le b.
\end{align}

\subsection{End of the proof of Theorem \ref{thm:dtfmain}}
From \eqref{eq:devolTF} and the boundary conditions \eqref{eq:dhomDiri}\&\eqref{eq:dhomNeum}
we obtain the following evolution equation for the position $x_0(t)$ of the left edge of support: 
\begin{align*}
  \dot x_0 = \frac{\dsol_{\hi}^2}{4\cell^3}\big(z_{\nicefrac32}^2+2\dsol_\hi\dsol_{\nicefrac32}-6\dsol_\hi^2\big),
\end{align*}
and consequently,
\begin{align*}
  |\dot x_0| \le 8\cell^{-3}\big(z_{\nicefrac32}^4+\dsol_\hi^4\big).
\end{align*}
For any $t^*\in[0,T]$, it follows thanks to \eqref{eq:crucialTF} that
\begin{align*}
  |x_0(t^*)-a|^{\frac{5+\alpha}4}
  &\le \left(\int_0^{t^*}|\dot x_0(t)|\dd t\right) ^{\frac{5+\alpha}4} \\
  &\le \big(8\cell^{-3}\big) ^{\frac{5+\alpha}4}\left(\int_0^{t^*}\big(z_{\nicefrac32}^4+\dsol_\hi^4\big)\dd t\right) ^{\frac{5+\alpha}4} \\
  & \le C_\alpha\cell^{-\frac34(5+\alpha)}(t^*)^{\frac{1+\alpha}4}\int_0^T \big(z_{\nicefrac32}^{5+\alpha}+\dsol_\hi^{5+\alpha}\big)\dd t \\
  & \le C_\alpha \cell^{-\frac34(5+\alpha)+(5+\frac45\alpha)-1}\sigma^4B^{-1}b(t^*)^{\frac{1+\alpha}4}
  = C_\alpha\sigma^4B^{-1}\cell^{\frac14+\frac\alpha{20}}b(t^*)^{\frac{1+\alpha}4},
\end{align*}
and hence
\begin{align*}
  |x_0(t^*)-a|\le C\cell^{1/5}\big[b^4(t^*)^{1+\alpha}\big]^{\frac1{5+\alpha}}.
\end{align*}
By the same argument as in the end of the proof of Theorem \ref{thm:dpmemain}, it follows that $b\le\bar b$.
Hence, the claim is proven.

%%%%%%%%%%%%%%%%%%%%%%%%%%%%%%%%%%%%%%%%%%%%%%%%%%%%%%%%%%%%%%%%%%%%%%%%%%%%%
% NUMERICS
%%%%%%%%%%%%%%%%%%%%%%%%%%%%%%%%%%%%%%%%%%%%%%%%%%%%%%%%%%%%%%%%%%%%%%%%%%%%%

\section{Numerical experiments}
In this short section, we present results from simple numerical simulations
in which the waiting time phenomenon is clearly visible.
Specifically, we consider
the discretized porous medium equation \eqref{eq:devolPME} with $m=2$,
and a variant of the discretized thin film equation \eqref{eq:devolTF} with non-equidistant grid.
For these, we study the discrete solutions corresponding to the initial data
\begin{align}
  \label{eq:dinit}
  \bar u(x) =
  \begin{cases}
    C_q\cos^q(\pi x) & \text{for $|x|<\frac12$}, \\
    0 & \text{for $|x|\ge\frac12$},
  \end{cases}
\end{align}
with different values of $q>0$, where $C_q$ is chosen to adjust $\bar u$'s mass to unity.
Our theory predicts the occurence of waiting times for $q\ge2$ in the case of the porous medium equation,
and for $q\ge4$ in case of the thin film equation.

For a given number $K$ of nodes, the discretization in mass space is defined as follows:
for $k=0,1,\ldots,K$, we choose the initial position $\bar x_k$ of the $k$th point as $\bar x_k=-\frac12\cos(\pi k/K)$
--- so that $\bar x_0=-\frac12$ and $\bar x_K=+\frac12$ mark the left and the right edge of $\bar u$'s support, respectively ---
and let $\xi_k := \int_{-\frac12}^{\bar x_k}\bar u(x)\dd x$ in accordance with \eqref{eq:consistent}.
This guarantees an improved resolution of $\bar u$ near the edges of support,
with a spatial mesh width of order $O(K^{-2})$ instead of the mesh width $O(K^{-1})$ in the bulk of $\bar u$.

Simulations have been performed for a variety of different choices of $q$ and $K$.
Qualitative results for $K=50$ and selected values of $q$ below and above the critical value
are reported in Figures \ref{fig:qualPME} and \ref{fig:qualTFE} for porous medium and thin film, respectively.
In both cases,
the top row shows an overlay of snapshots of the density in physical space at different instances of time,
the bottom row shows the position of the Lagrangian points $x_k(t)$ as functions of time.
% we use up to $K=400$ for the porous medium equation and $K=100$ for the thin film equation.

\begin{figure}
  \centering
  \includegraphics[width=0.24\textwidth]{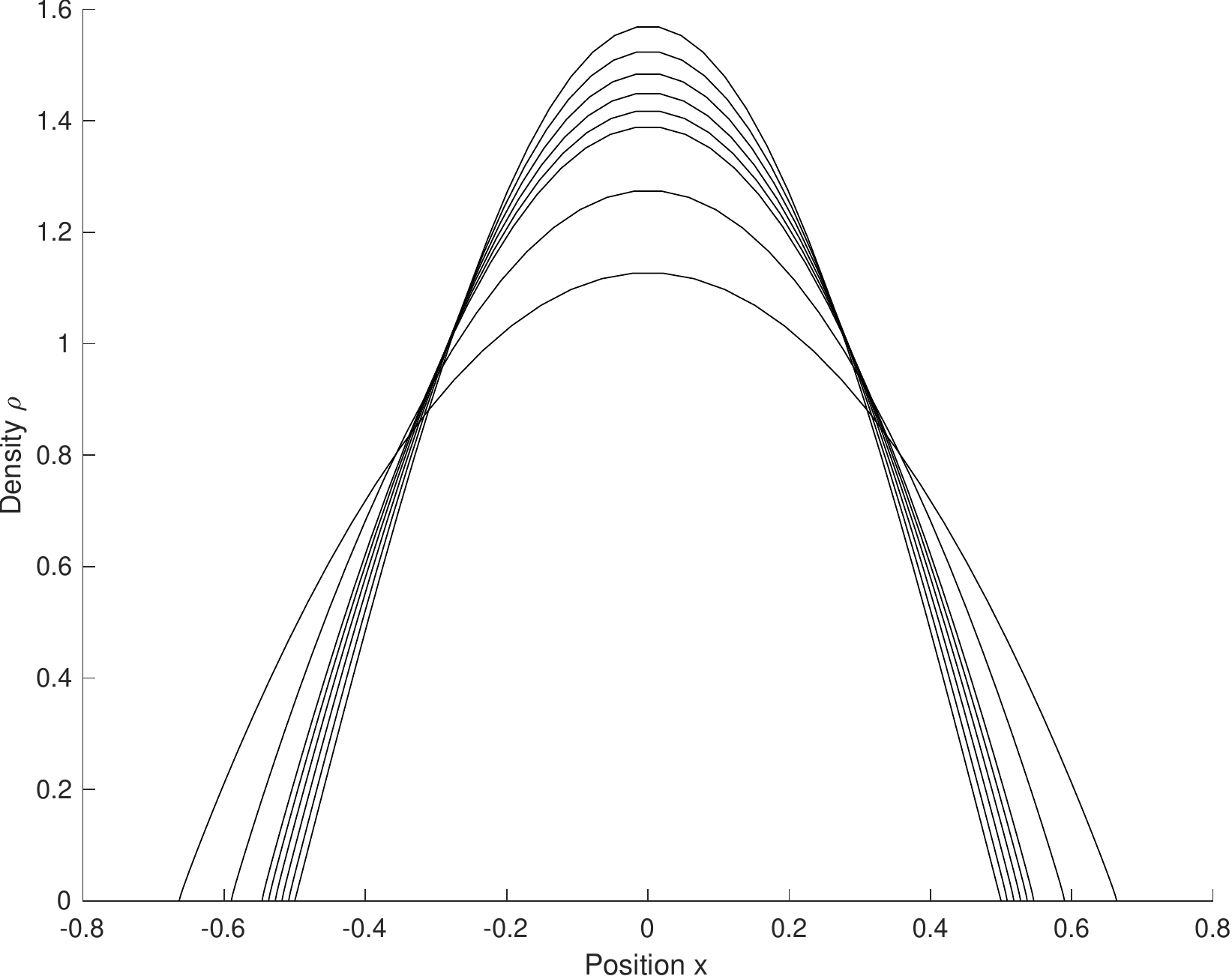}
  \includegraphics[width=0.24\textwidth]{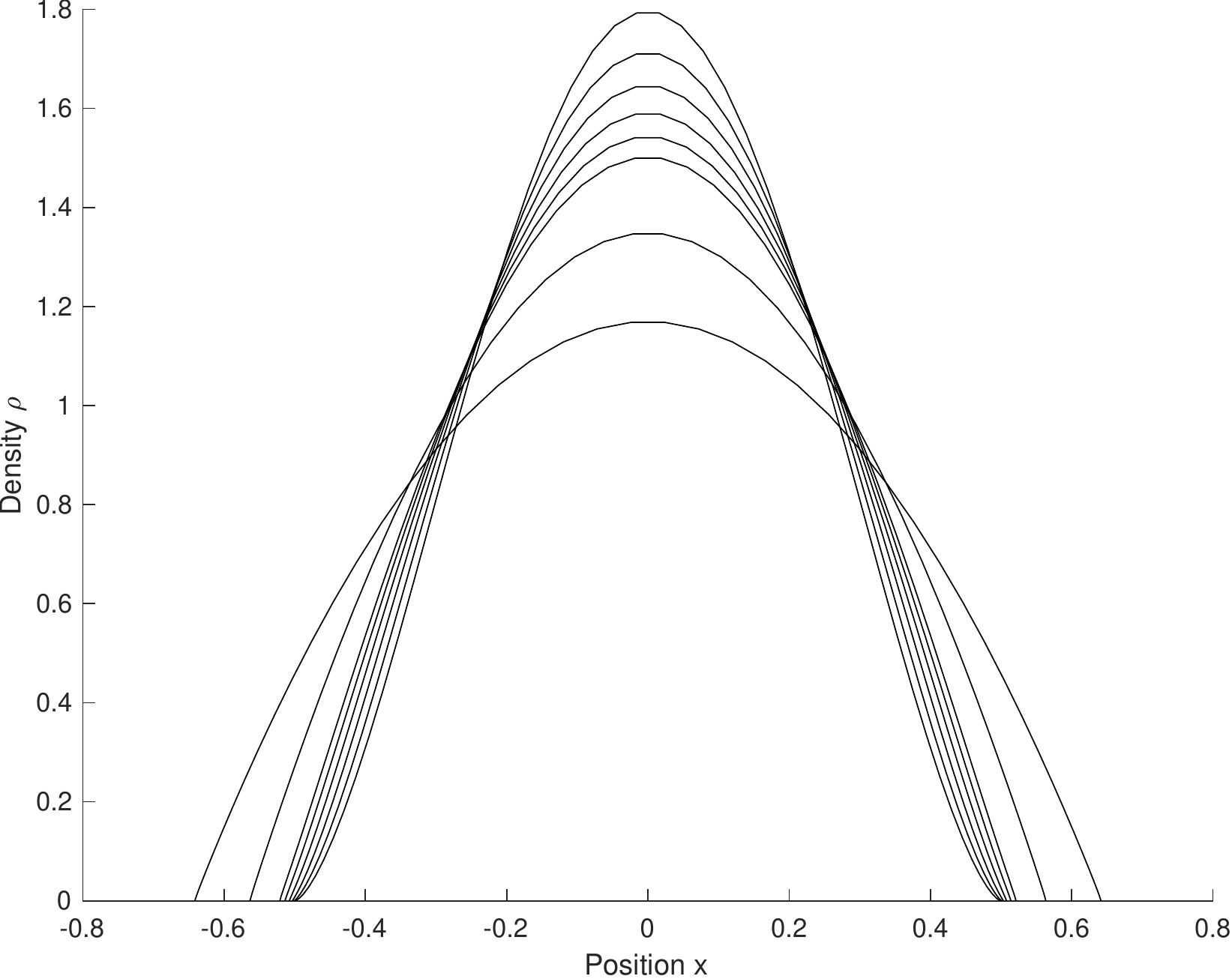}
  \includegraphics[width=0.24\textwidth]{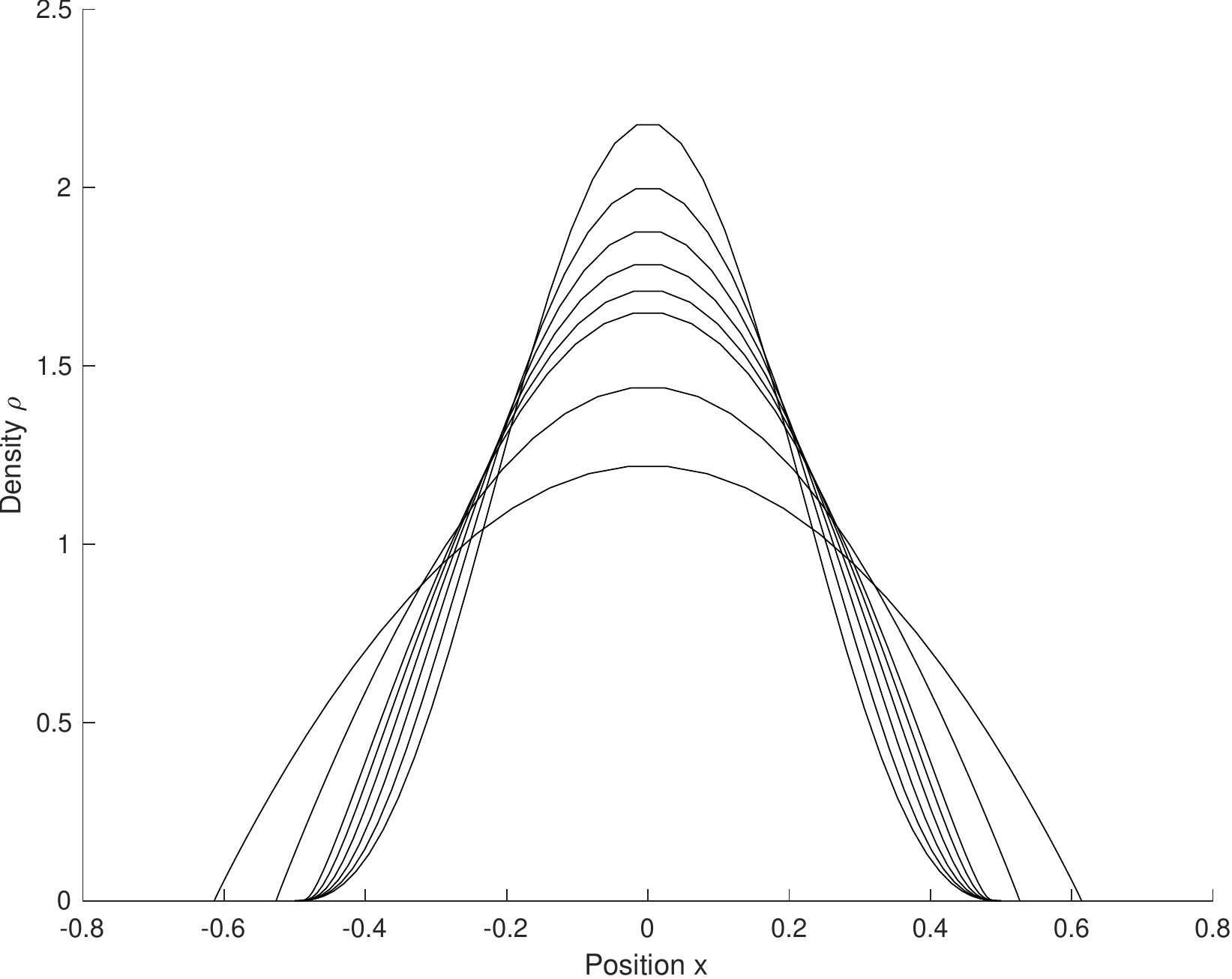}
  \includegraphics[width=0.24\textwidth]{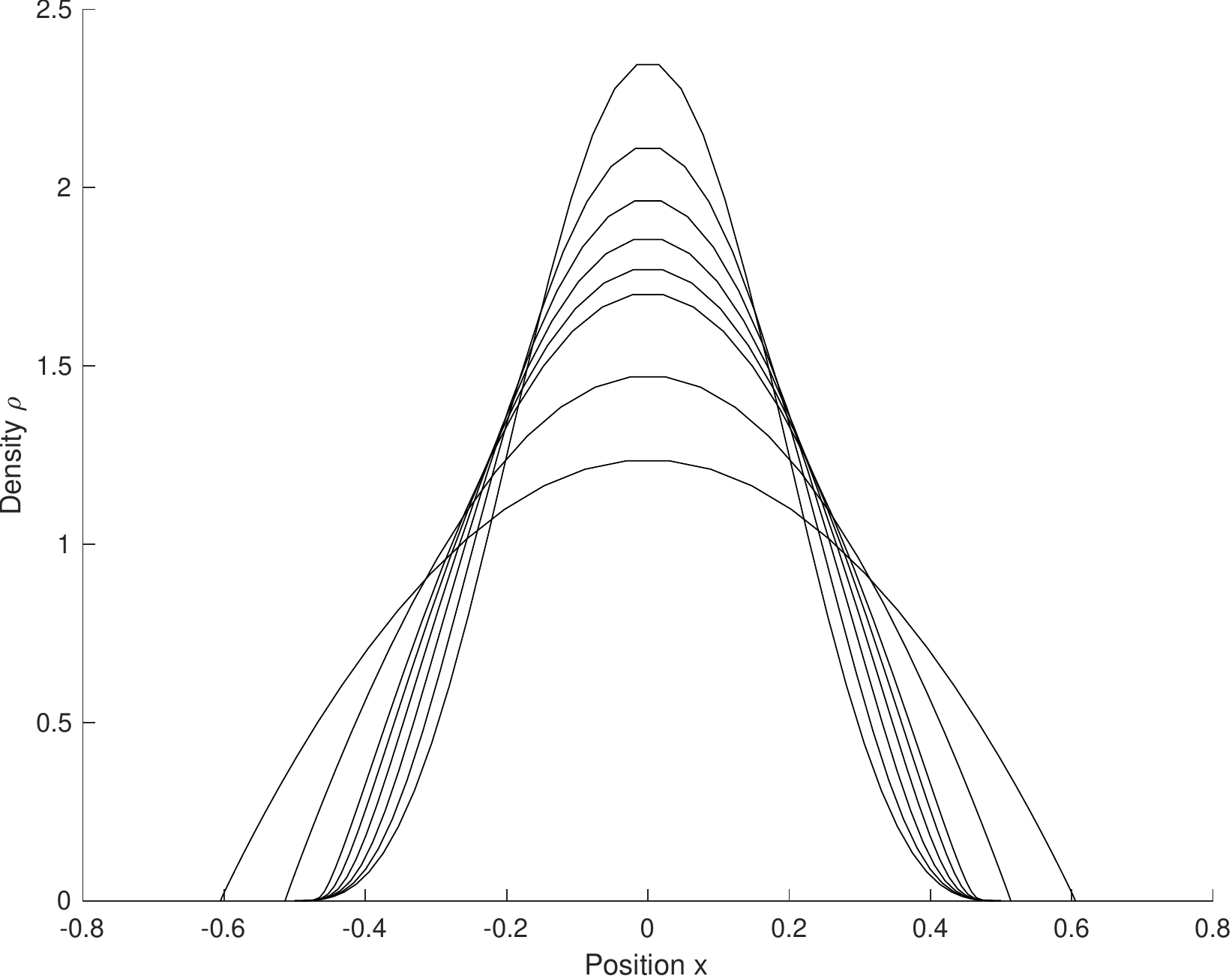}
  \newline
  \includegraphics[width=0.24\textwidth]{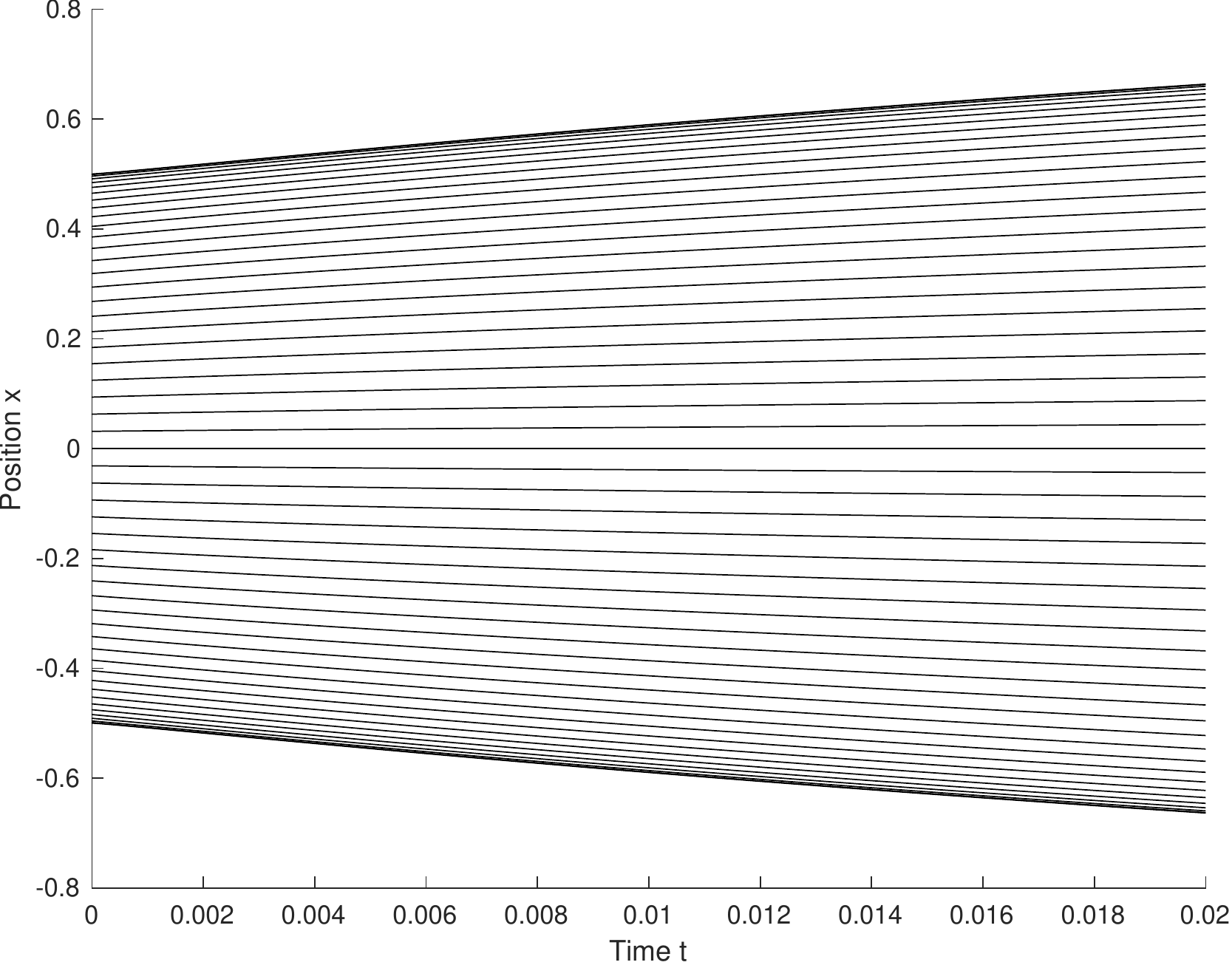}
  \includegraphics[width=0.24\textwidth]{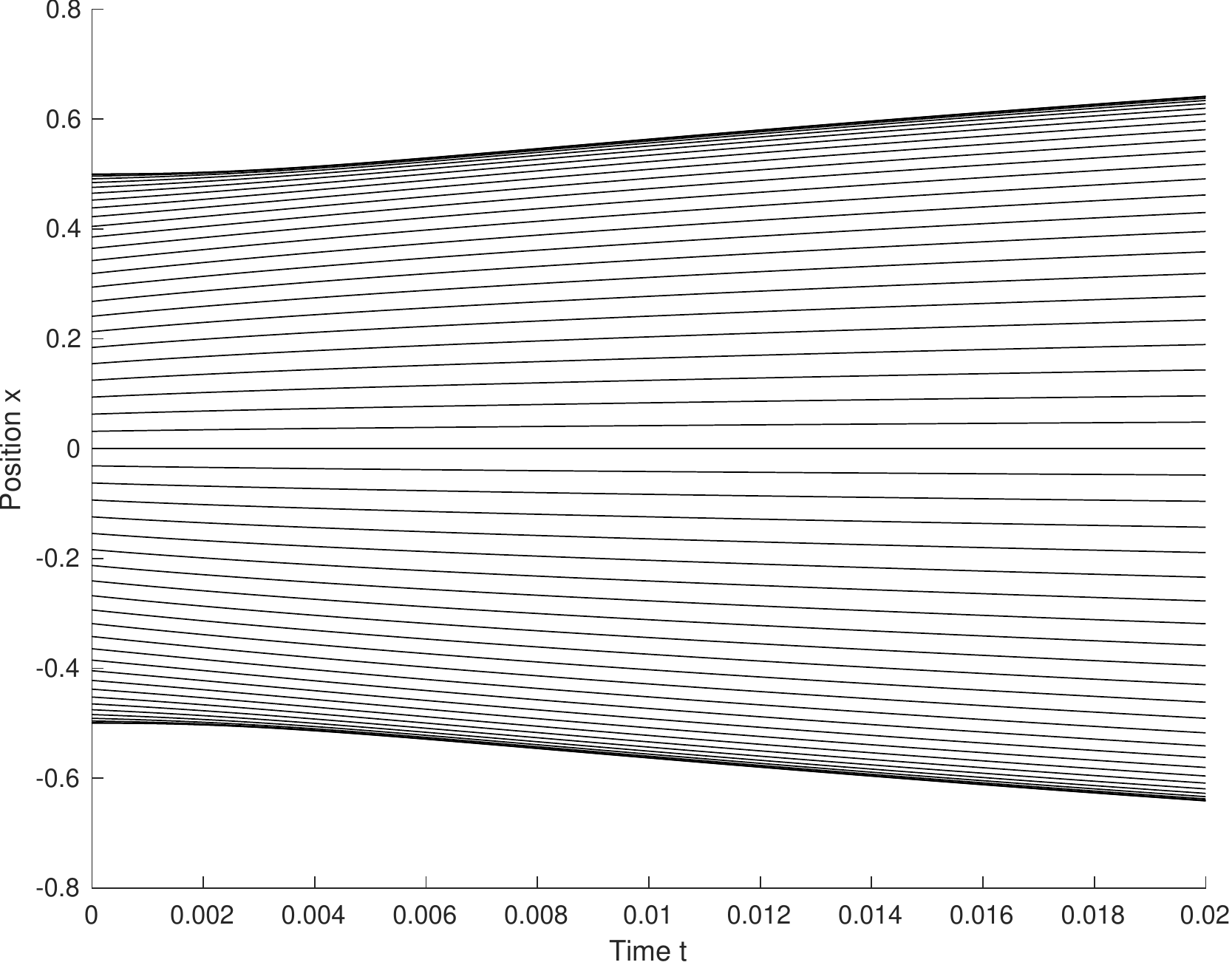}
  \includegraphics[width=0.24\textwidth]{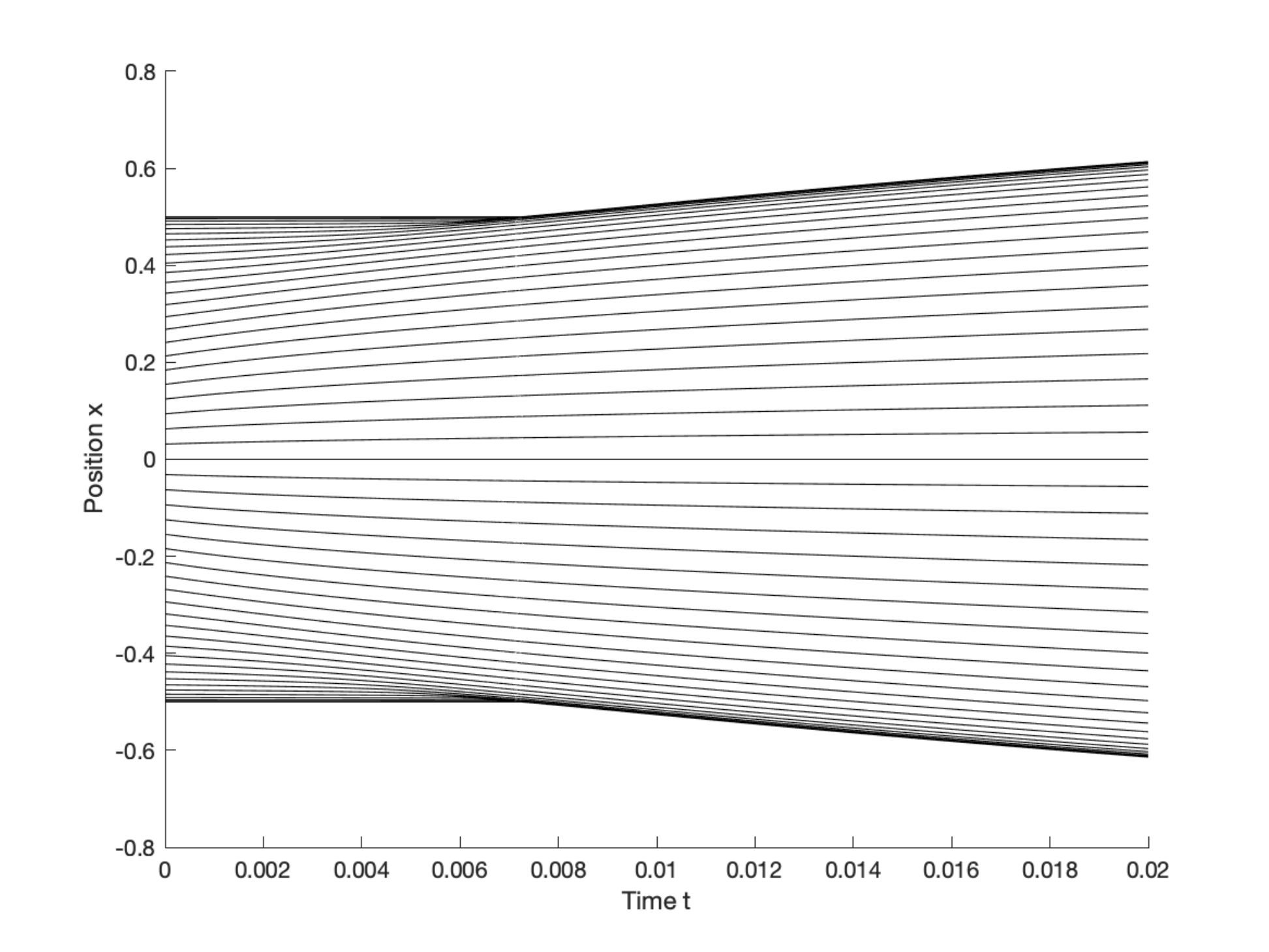}
  \includegraphics[width=0.24\textwidth]{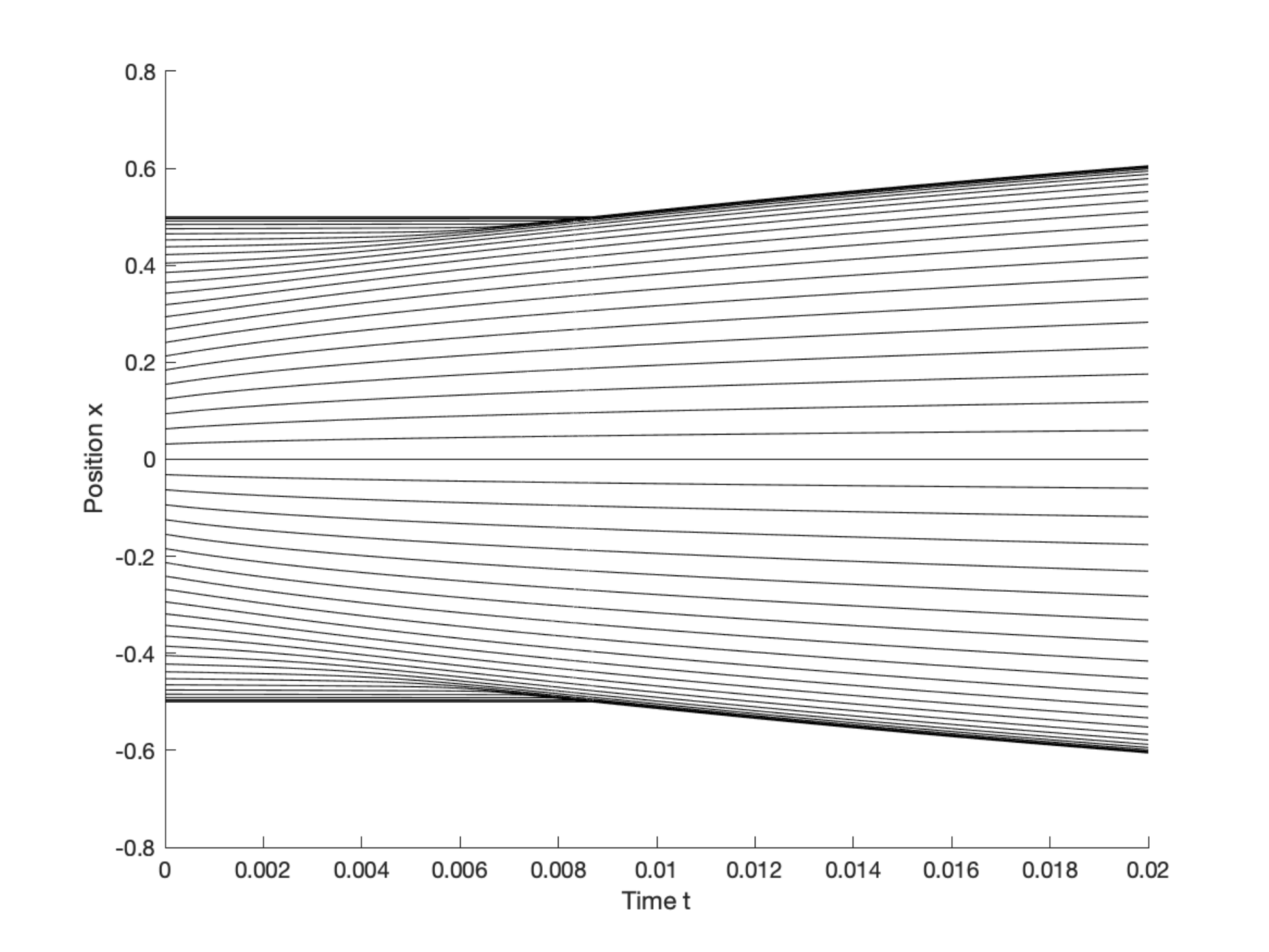}
  \caption{Qualitative illustration of the solutions to the discrete porous medium equation for $K=50$:
    for exponents $q=1.0,\,1.5,\,2.5,\,3.0$ from left to right,
    snapshots of the densities $\rho$
    at times $t=0,1,2,3,4,5,10,20\times 10^{-3}$ are shown on top,
    and trajectories of the Lagrangian points directly below.}
  \label{fig:qualPME}
\end{figure}

\begin{figure}
  \centering
  \includegraphics[width=0.24\textwidth]{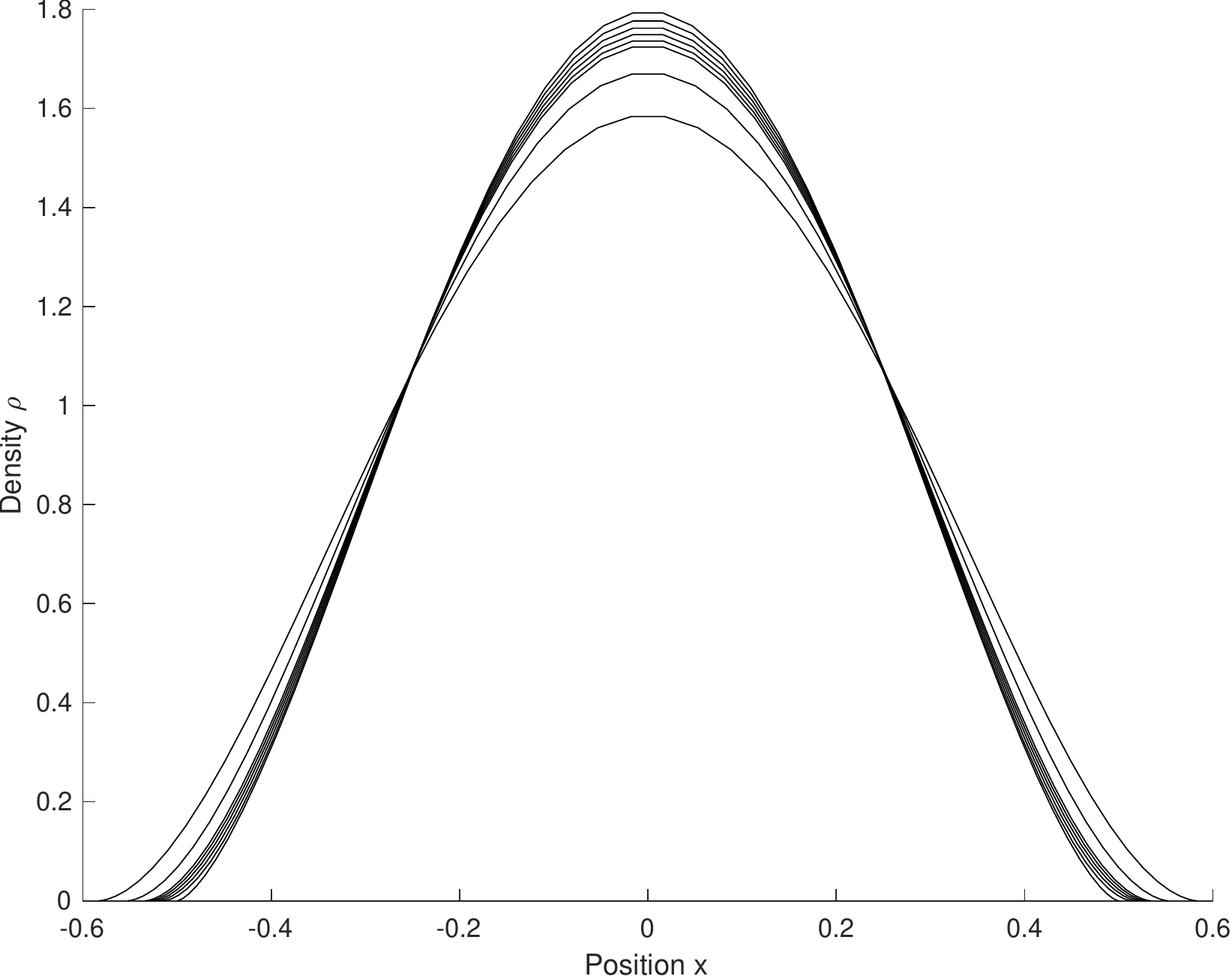}
  \includegraphics[width=0.24\textwidth]{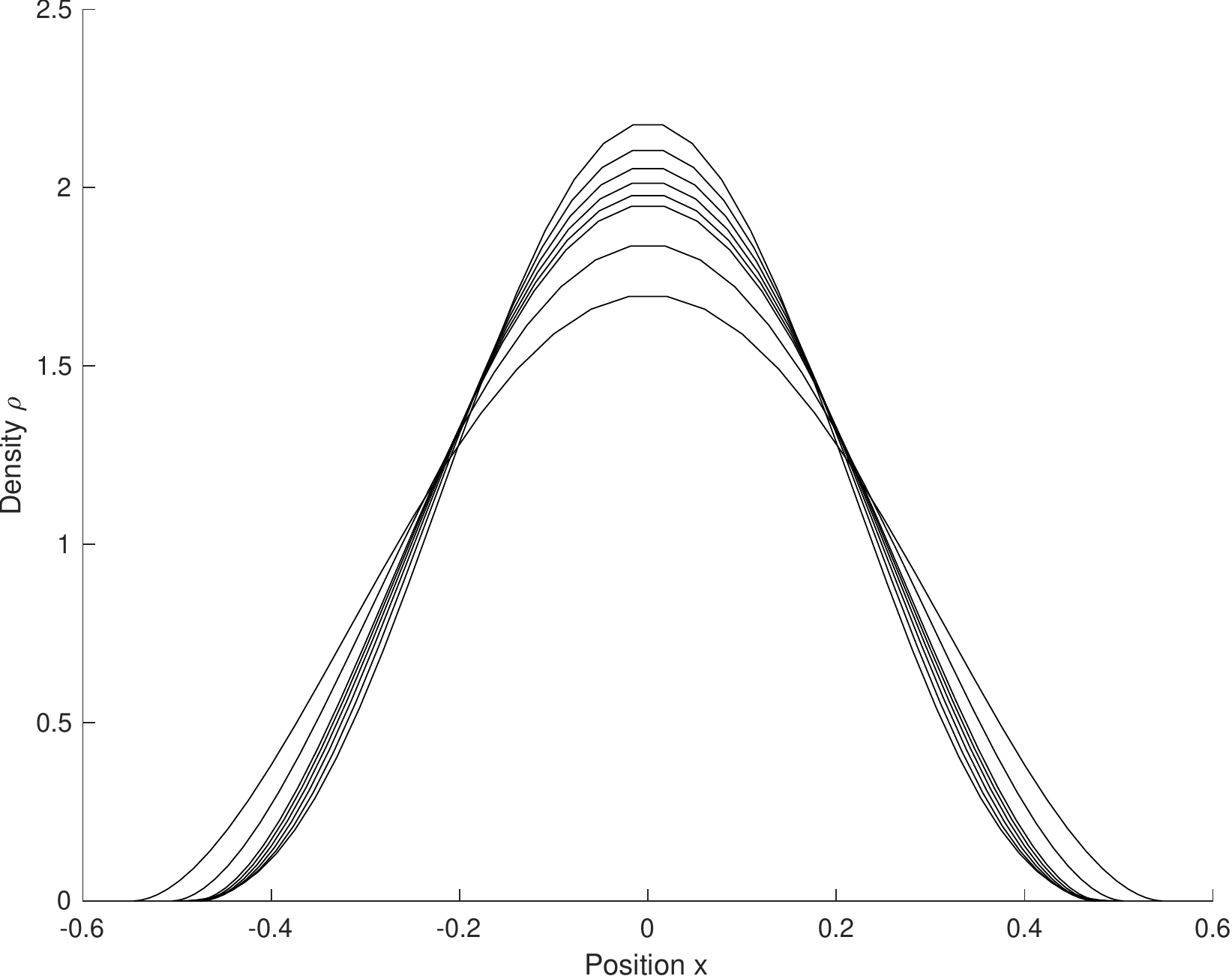}
  \includegraphics[width=0.24\textwidth]{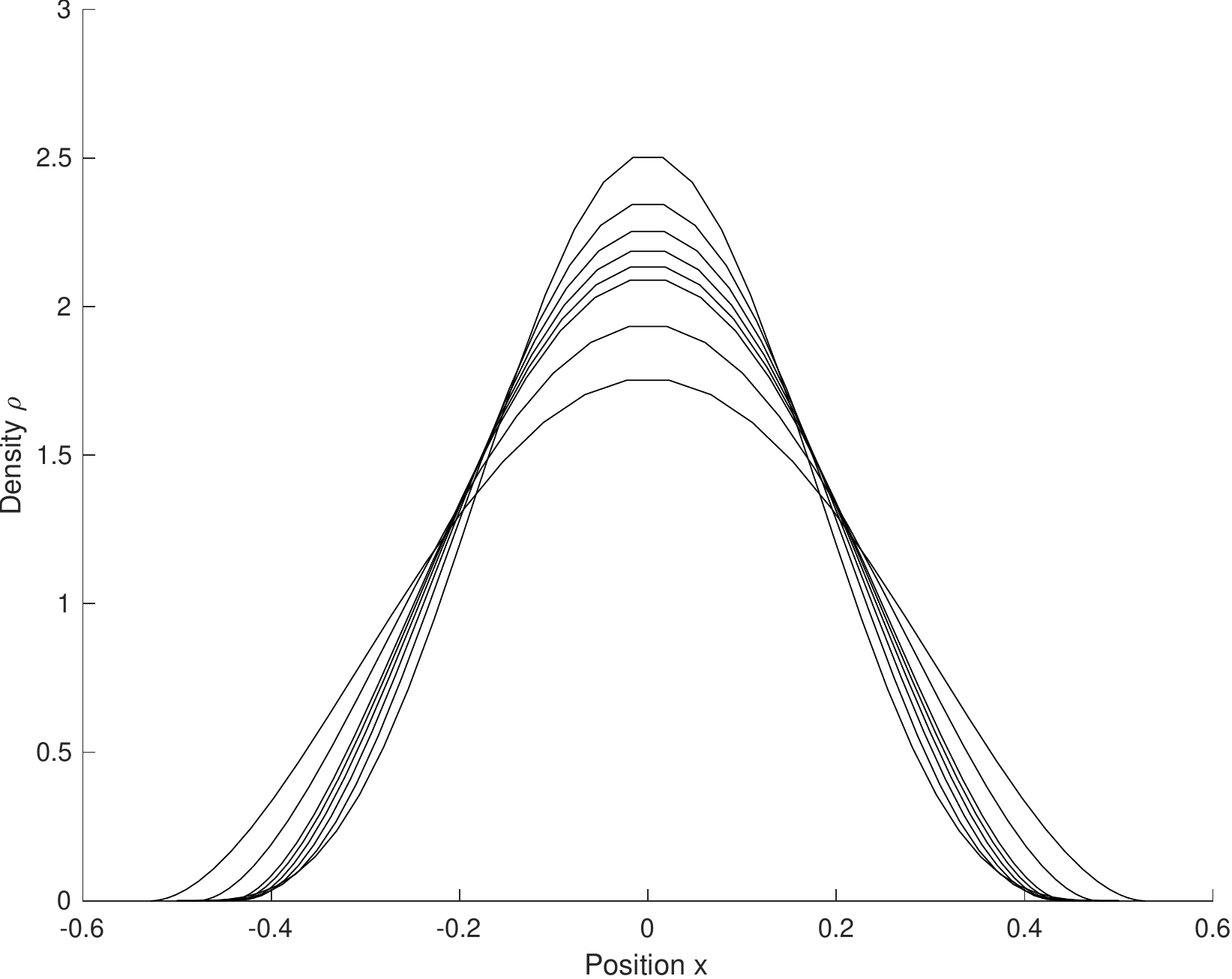}
  \includegraphics[width=0.24\textwidth]{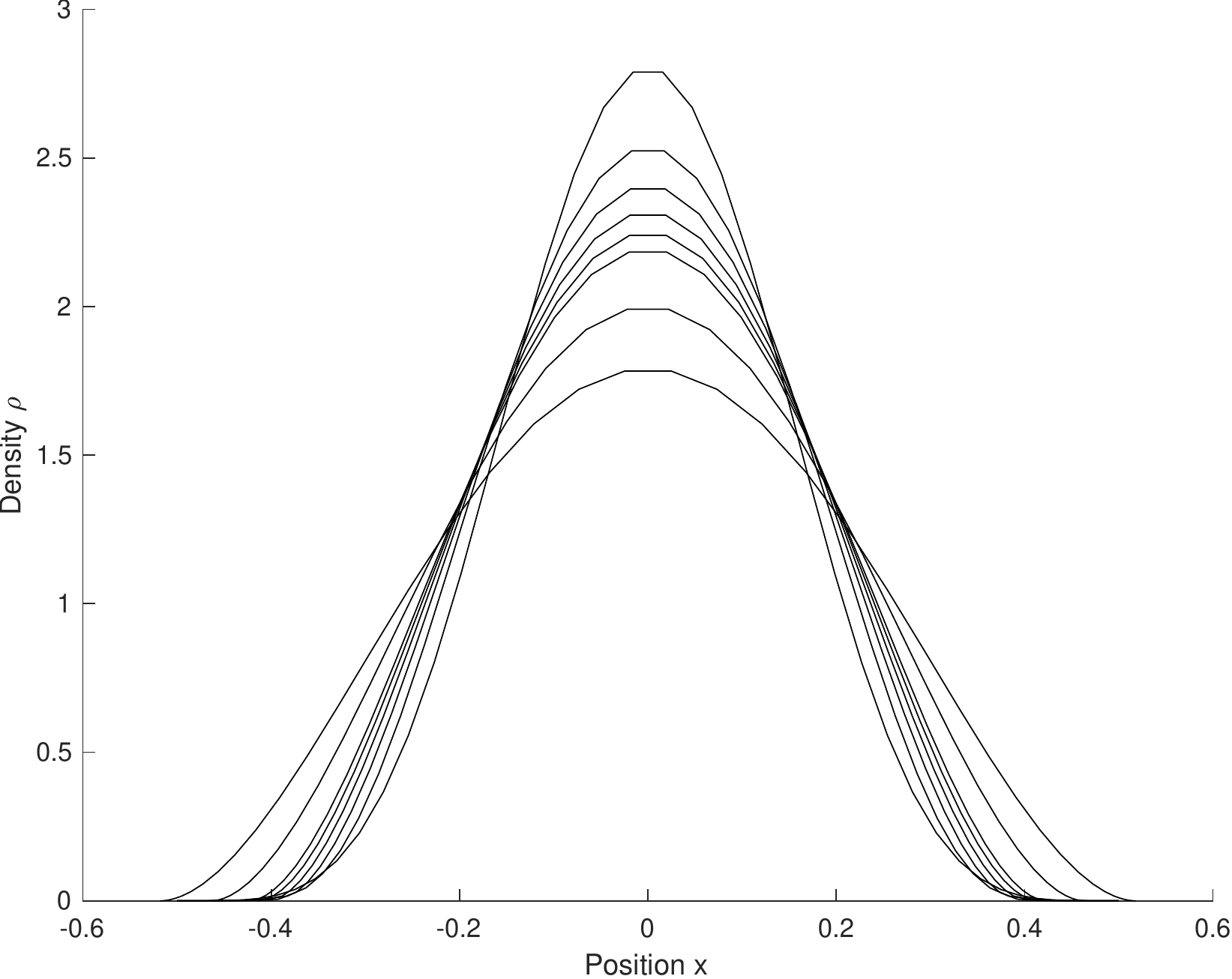}
  \newline
  \includegraphics[width=0.24\textwidth]{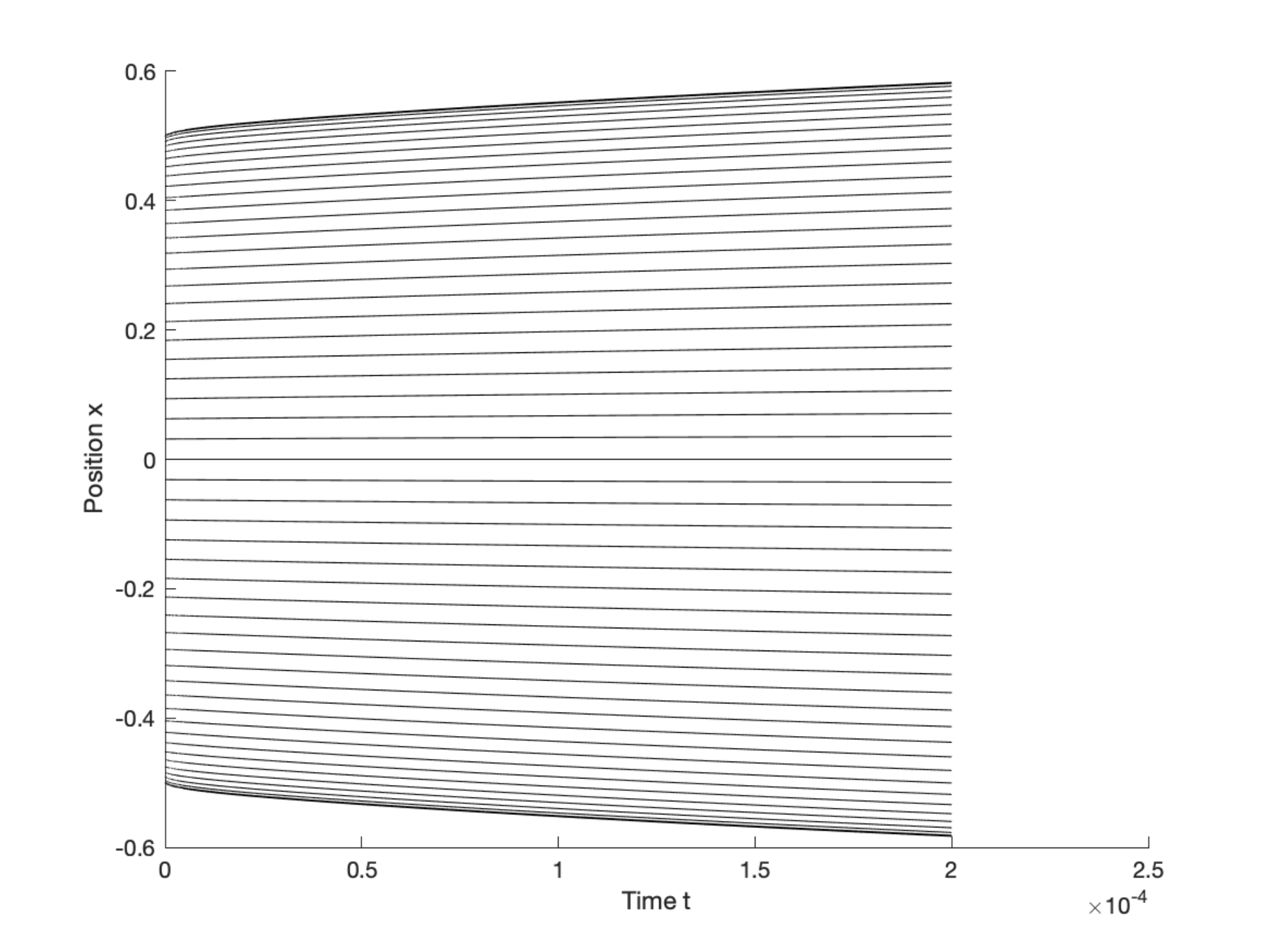}
  \includegraphics[width=0.24\textwidth]{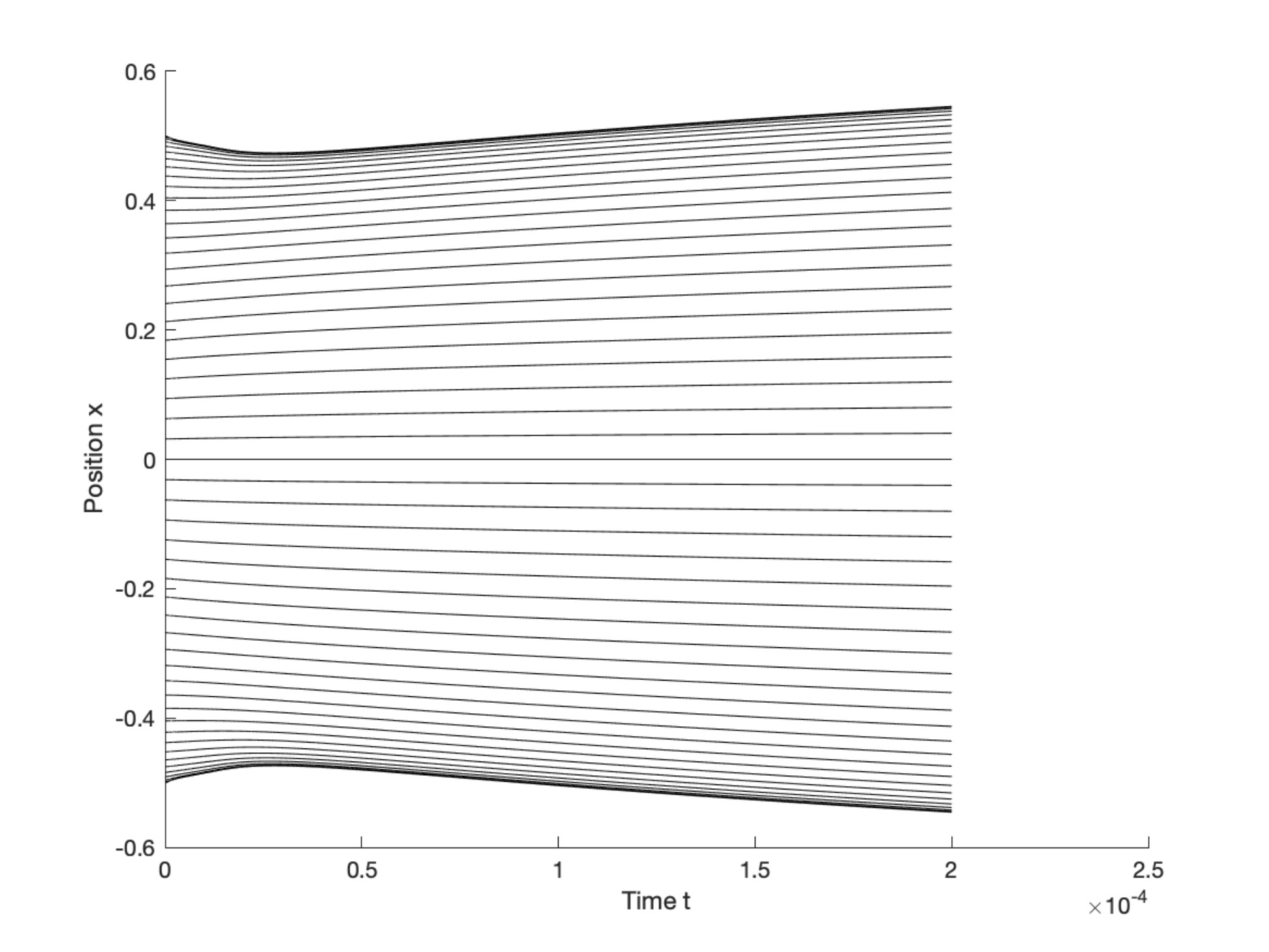}
  \includegraphics[width=0.24\textwidth]{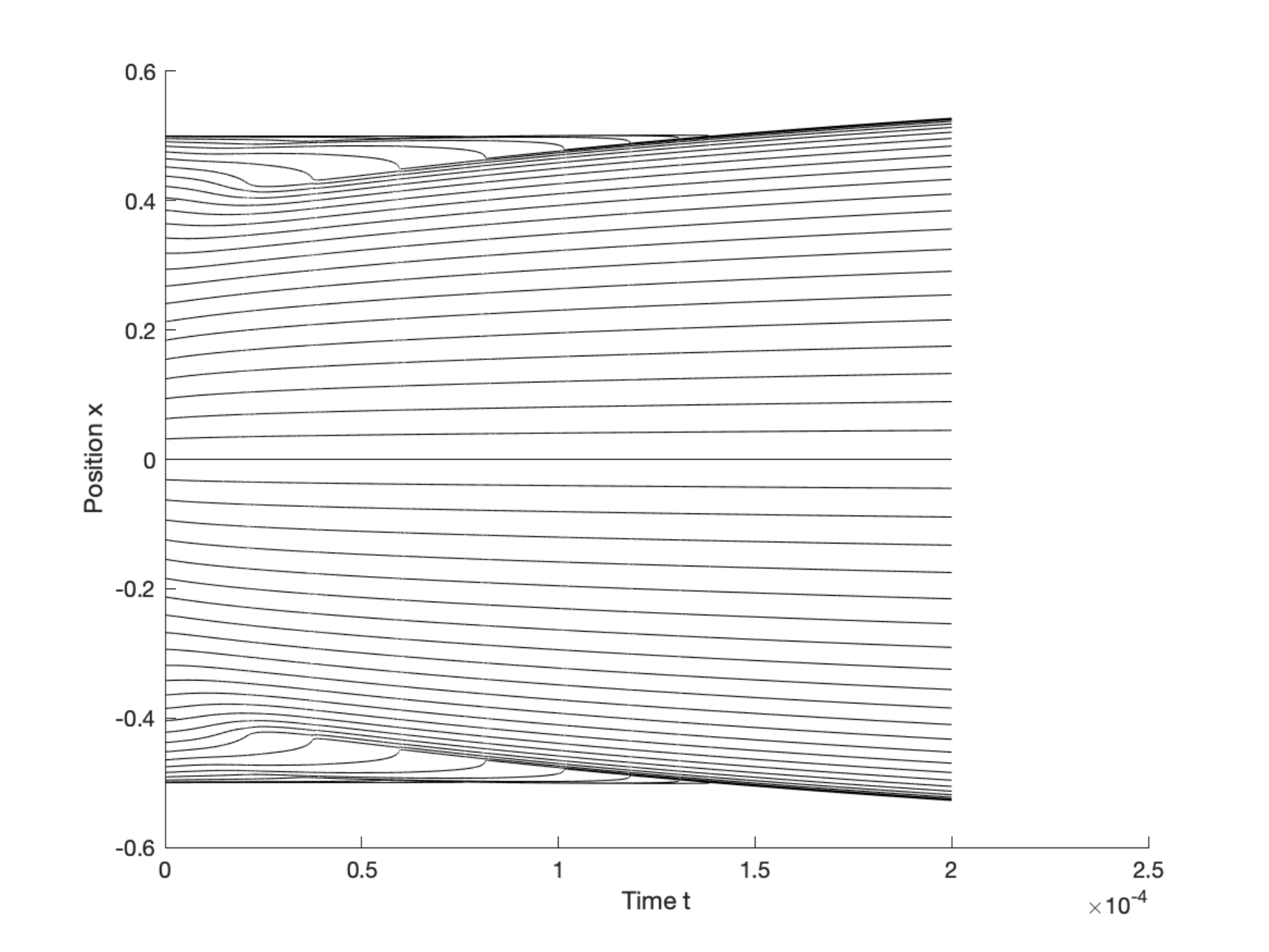}
  \includegraphics[width=0.24\textwidth]{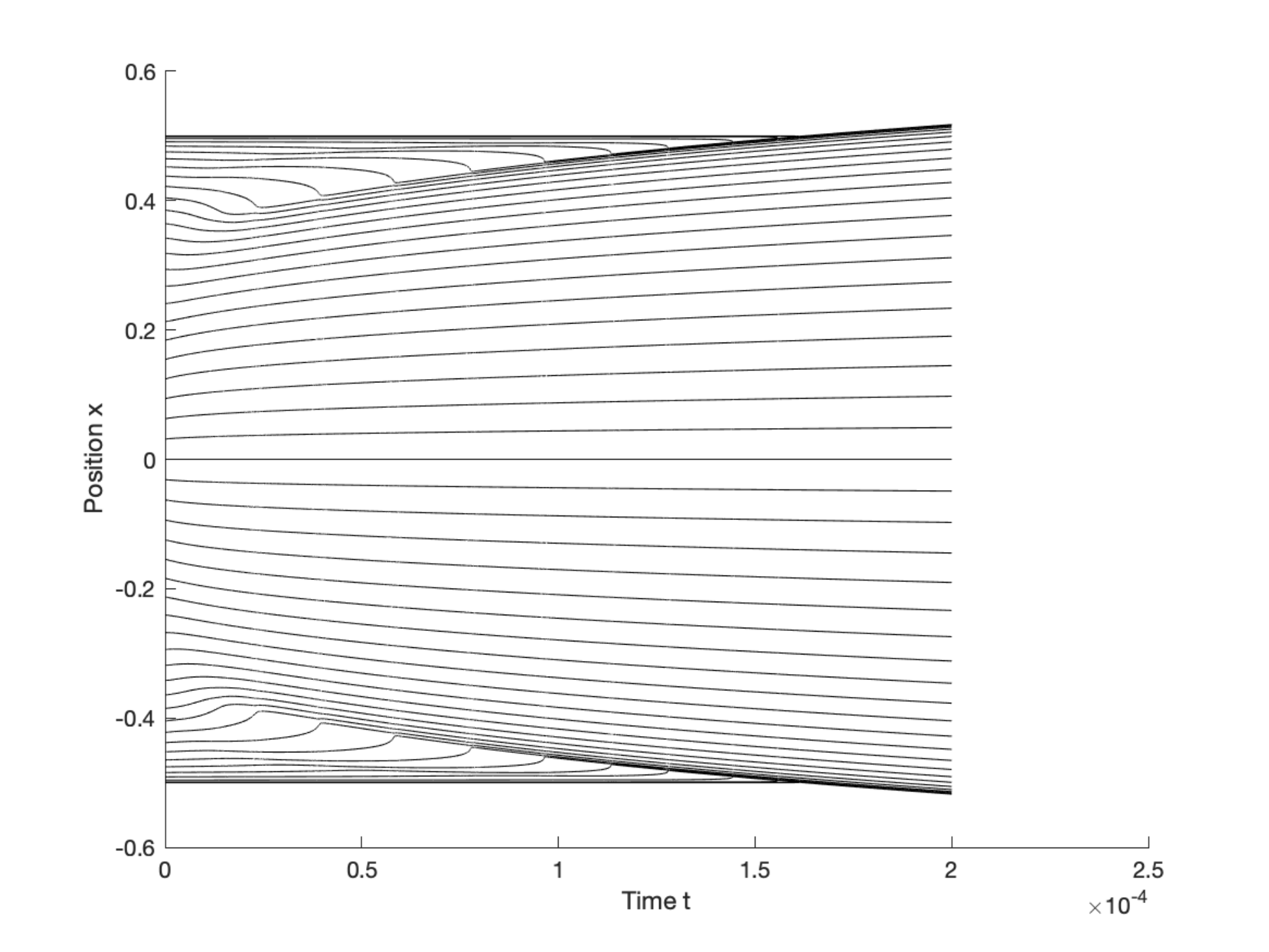}
  \caption{Qualitative illustration of the solutions to the discrete thin film equation for $K=50$:
    for exponents $q=1.5,\,2.5,\,3.5,\,4.5$ from left to right,
    snapshots of the densities $\rho$
    at times $t=0,1,2,3,4,5,10,20\times 10^{-5}$ are shown on top,
    and trajectories of the Lagrangian points directly below.}
  \label{fig:qualTFE}
\end{figure}

For the discrete porous medium equation \eqref{eq:devolPME},
the waiting time phenomenon is nicely illustrated by the trajectories
in the last two plots in the lower row of Figure \ref{fig:qualPME}:
in the beginning, the outermost points remain at their initial position without any visible movement
and then gain momentum quite abruptly.
A more quantitative analysis is difficult since there is no clearly defined distinction
between the occurence of a waiting time and an initially very slow motion of the edge of support
for the spatially discrete solutions.
Still, to make some quantitative statement, we have made an ad hoc definition 
of an approximative measure for the duration of the waiting time:
we use the supremum $T$ of all times $t\ge0$ such that $x_1(t)\ge-\frac12$,
that is, the first time at which the left-most mass package has completely left $\bar u$'s support.
The thus obtained values $T$ are in good agreement with the time
at which the plots of the Lagrangian trajectories suggest the first significant motion of the edges of support.

\begin{figure}
  \centering
  \includegraphics[width=0.4\textwidth]{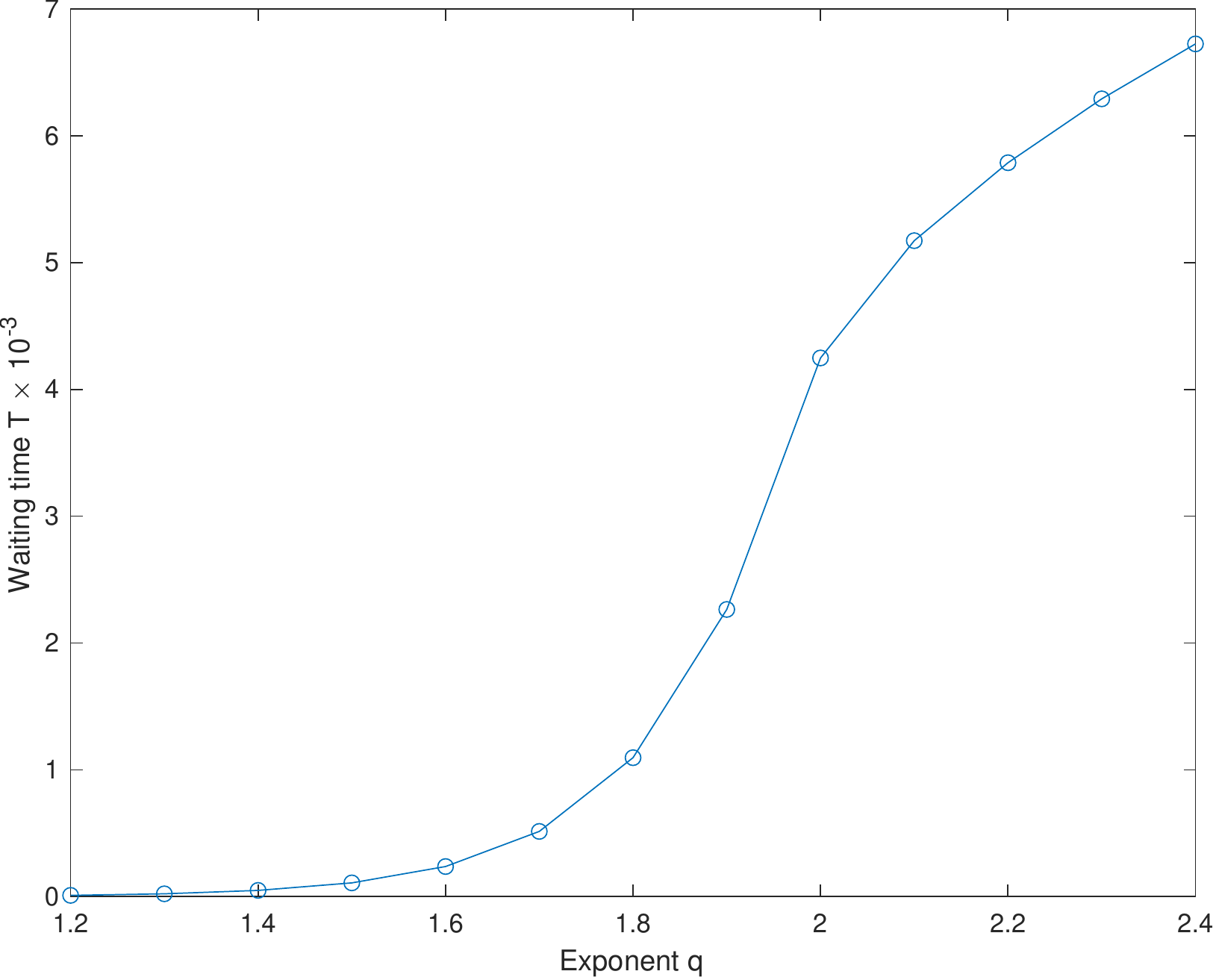}
  \hspace{20pt}
  \includegraphics[width=0.4\textwidth]{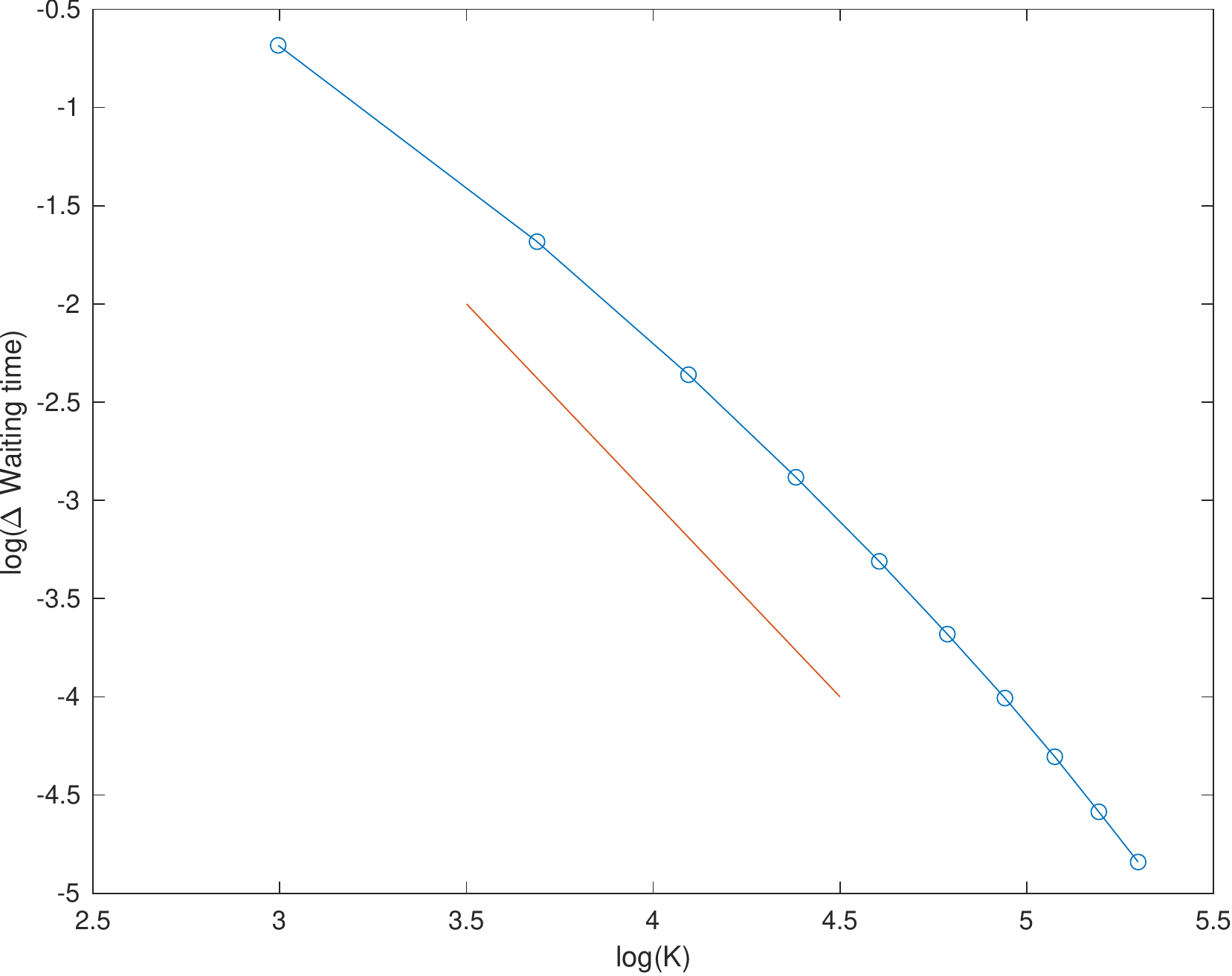}  
  \caption{Values of the estimated waiting times for solutions of the discrete porous medium equation.
    Left: estimated waiting time in simulations with $K=400$ versus exponent $q$ in \eqref{eq:dinit}.
    Right: double logarithmic plot of the deviation of the estimated waiting time for fixed exponent $q=2.2$
    with $K=20,\,40,\ldots,200$ from the reference value at $K=400$;
    the straight line has a slope of $-2$.}
  \label{fig:wtimeplot}
\end{figure}

From reference solutions with $K=400$,
we have computed that approximate waiting time $T$ for different values of $q$ between $1.2$ and $2.4$,
see Figure \ref{fig:wtimeplot} left.
From the theory of the PDE \eqref{eq:PME1},
one would expect no waiting time (i.e., $T=0$) for $q$ below the critical value $q^*=2$,
and then a jump to a positive value at $q=q^*$, followed by a continuous growth of $T$ with $q>q^*$.
Clearly, such a sharp transition cannot be expected after discretization,
at least not for our ad hoc approximation of the waiting time,
for the reasons that have been explained above.
Still, the plot reflects the expected behaviour quite well:
it shows a relatively steep growth of $T$ as $q$ approaches the critical value $q^*=2$ from below,
and once $q$ is above the critical value, $T$ continuous to grow, but at a slower rate.

We have further studied the convergence of the estimated waiting time for solutions with different values of $K$
towards the reference value at $K=400$, see Figure \ref{fig:wtimeplot} right.
The approximation error is of the order $O(K^{-2})$, which is expected:
we have $\bar x_1-\bar x_0=O(K^{-2})$ by construction,
and this is proportional to the time that it takes $x_1(t)$ to reach position $x=-\frac12$
once it has gained speed.
Note that already for $K=20$, the approximation of the waiting time
differs from the reference value by only about twenty percent.

In the case of the discrete thin film equation,
the qualitative behaviour of solutions is too complex to admit a similar quantitative evaluation of the numerical results.
Instead, we only briefly comment on the results reported in Figure \ref{fig:qualTFE}.
There is very obviously no waiting time for $q=1.5$ and for $q=2.5$, respectively:
in the first case, the support spreads immediately after initialization,
in the second, the support recedes, and then expands later.
These observations are in agreement with the expected behaviour for the PDE \eqref{eq:TF1}.
For $q=3.5$ and $q=4.5$, a waiting time is very clearly observed.
For $q=4.5$, this is again in perfect agreement with
the general theory, see \cite{FischerStrongAndVeryWeakSlippage}, and our own Theorem \ref{thm:dtfmain}.
On the contrary, the occurence of a waiting time for $q=3.5$ is rather unexpected,
but does not contradict Theorem \ref{thm:dtfmain} or the available \emph{sufficient} criteria for the PDE \eqref{eq:TF1}.

%%%%%%%%%%%%%%%%%%%%%%%%%%%%%%%%%%%%%%%%%%%%%%%%%%%%%%%%%%%%%%%%%%%%%%%%%%%%%
% APPENDIX
%%%%%%%%%%%%%%%%%%%%%%%%%%%%%%%%%%%%%%%%%%%%%%%%%%%%%%%%%%%%%%%%%%%%%%%%%%%%%

\appendix

\section{Elementary inequalities}
\begin{lemma}
  For any positive real numbers $x\neq y$, and all $p\in(0,1)$,
  \begin{align}
    \label{eq:diffquot}
    0\le\frac{x^{1+p}-y^{1+p}}{x-y}\le\frac{1+p}{2^p}(x+y)^p.
  \end{align}
\end{lemma}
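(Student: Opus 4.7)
The plan is to reduce both inequalities to elementary facts about the function $\phi(t):=t^{1+p}$, exploiting that its derivative $\phi'(t)=(1+p)t^p$ is concave on $(0,\infty)$ for $p\in(0,1)$.

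First I would dispose of the lower bound. Since $\phi$ is strictly increasing on $(0,\infty)$, the sign of $x^{1+p}-y^{1+p}$ agrees with the sign of $x-y$, so the quotient is non-negative. This is immediate and needs no further argument.

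For the upper bound, the natural first attempt is the mean value theorem, which yields $\frac{x^{1+p}-y^{1+p}}{x-y}=(1+p)\xi^p$ for some $\xi$ between $x$ and $y$; but $\xi$ may lie arbitrarily close to $\max(x,y)$, and this crude bound is insufficient because $\xi^p$ can exceed $\bigl(\tfrac{x+y}{2}\bigr)^p$. The remedy is to replace a pointwise evaluation of $\phi'$ by its average. Writing
\[
\frac{x^{1+p}-y^{1+p}}{x-y}=\frac{1}{x-y}\int_{\min(x,y)}^{\max(x,y)}\phi'(s)\,\mathrm ds,
\]
and invoking the Hermite--Hadamard (or equivalently Jensen's) inequality for the concave function $\phi'$, I obtain
\[
\frac{1}{x-y}\int_{\min(x,y)}^{\max(x,y)}\phi'(s)\,\mathrm ds\le \phi'\!\left(\frac{x+y}{2}\right)=(1+p)\left(\frac{x+y}{2}\right)^{\!p}=\frac{1+p}{2^p}(x+y)^p,
\]
which is exactly the desired bound.

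The only point deserving verification is the concavity of $\phi'(t)=(1+p)t^p$ on $(0,\infty)$ for $p\in(0,1)$, which follows from $\phi'''(t)=(1+p)p(p-1)t^{p-2}\le 0$. Once this is noted, no computation remains. The main (very mild) obstacle is recognising that a pointwise bound via MVT fails and that the correct averaging lemma is Hermite--Hadamard applied to $\phi'$; otherwise, the proof is a one-line consequence of concavity.
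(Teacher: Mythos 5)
Your proof is correct, and it takes a genuinely different route from the paper's. The paper expands both $x^{1+p}$ and $y^{1+p}$ to third order in a Taylor series around the midpoint $m=\tfrac{x+y}{2}$, observes that the quadratic terms cancel upon subtraction, and checks by hand that the cubic remainder comes in with a favourable sign. You instead write the difference quotient as the average of $\phi'(t)=(1+p)t^p$ over the interval between $x$ and $y$ and invoke Hermite--Hadamard (equivalently Jensen) for the concave function $\phi'$; the concavity is exactly what the paper's sign-check of the third-order term is implicitly verifying. Your route is shorter and isolates the conceptual content --- the bound is the value of $\phi'$ at the midpoint, and concavity of $\phi'$ is the reason the average is dominated by that midpoint value --- whereas the paper's explicit Taylor computation is more elementary and self-contained (no appeal to a named inequality) and would adapt more easily if one wanted the two-sided refinement with the remainder made quantitative. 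One cosmetic point: in the display you divide by $x-y$ while integrating from $\min(x,y)$ to $\max(x,y)$; strictly this should be $|x-y|$, but since the numerator $x^{1+p}-y^{1+p}$ has the same sign as $x-y$, the quotient you start from is indeed the nonnegative average and the argument is unaffected.
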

\begin{proof}
  The estimate from below follows by monotonicity of $t\mapsto t^{1+p}$.
  The bound from above can be derived via Taylor expansion as follows:
  let $m:=\frac{x+y}2$,
  then
  \begin{align*}
    x^{1+p} &= m^{1+p}+(1+p)m^p(x-m)+\frac{p(1+p)}2m^{-(1-p)}(x-m)^2-\frac{p(1-p^2)}6\xi^{-(2-p)}(x-m)^3, \\
    y^{1+p} &= m^{1+p}+(1+p)m^p(y-m)+\frac{p(1+p)}2m^{-(1-p)}(y-m)^2-\frac{p(1-p^2)}6\eta^{-(2-p)}(y-m)^3,
  \end{align*}
  where $\xi\in(0,x)$ and $\eta\in(0,y)$ are suitable intermediate values.
  We subtract the second equation from the first, 
  and divide by $x-y=2(x-m)=-2(y-m)$:
  \begin{align*}
    \frac{x^{1+p}-y^{1+p}}{x-y} = (1+p)m^p - \frac{p(1-p^2)}{48}\big(\xi^{-(2-p)}+\eta^{-(2-p)}\big)(x-y)^2 \le (1+p)m^p.
  \end{align*}
\end{proof}
\begin{lemma}
  For any positive real numbers $x,y$, and all $p\in(0,1)$,
  \begin{align}
    \label{eq:diffquot2}
    \big|x^{1+p}-y^{1+p}-(1+p)y^p(x-y)\big|\le p y^{-1+p}(x-y)^2.
  \end{align}
\end{lemma}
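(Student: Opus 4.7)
The plan is to first observe that the lower bound, namely that $g(x) := x^{1+p} - y^{1+p} - (1+p)y^p(x-y)$ is non-negative, comes for free from the convexity of the map $t \mapsto t^{1+p}$ (whose second derivative $p(1+p)t^{p-1}$ is positive for $t>0$). This removes the absolute value in \eqref{eq:diffquot2} and reduces the task to proving the upper bound $g(x) \le py^{p-1}(x-y)^2$.

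For the upper bound, I would try expanding $py^{p-1}(x-y)^2 - g(x)$ explicitly and looking for cancellations. Collecting terms, the contributions $\pm p y^{p+1}$ cancel, while the cross-term $-2p y^p x$ from the square combines with $+(1+p)y^p x$ from the linearization of $g$ to give $(1-p)y^p x$. The resulting identity is
\begin{align*}
  py^{p-1}(x-y)^2 - g(x) = x\,y^p\Bigl[(1-p) + p\,(x/y) - (x/y)^p\Bigr].
\end{align*}
With the substitution $t := x/y$, the bracket is precisely $1 + p(t-1) - t^p$, that is, the defect in the tangent-line inequality at $t=1$ for the map $t\mapsto t^p$.

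Since $p \in (0,1)$, the map $t \mapsto t^p$ is concave on $(0,\infty)$, so the tangent-line inequality $t^p \le 1 + p(t-1)$ holds for every $t > 0$; and at $t=0$ it reduces to $0 \le 1-p$, which is also true. Combined with the factor $x \ge 0$, this shows the right-hand side of the identity is non-negative. The upper bound $g(x) \le p y^{p-1}(x-y)^2$ then follows, and together with the convexity lower bound $g(x) \ge 0$ we obtain \eqref{eq:diffquot2}.

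The main obstacle is recognizing the right algebraic regrouping. A naive Taylor expansion of $x^{1+p}$ around $y$ with Lagrange remainder would give $g(x) = \tfrac12 p(1+p)\,\xi^{p-1}(x-y)^2$ for some intermediate $\xi$; since $p-1<0$, one has $\xi^{p-1}\le y^{p-1}$ when $x\ge y$ but $\xi^{p-1}\ge y^{p-1}$ when $x<y$, so this route yields the desired estimate only for $x\ge y$. The algebraic factorization above bypasses the case split entirely by reducing both regimes to the single concavity fact for $t\mapsto t^p$.
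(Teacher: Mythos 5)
Your proof is correct, and it takes a genuinely different route from the paper. The paper normalizes to $z = x/y$ and then \emph{splits into cases}: for $z\ge1$ it uses a second-order Taylor expansion with Lagrange remainder, exploiting that the intermediate point satisfies $\zeta^{p-1}\le1$; for $0<z\le1$ that argument fails (exactly the issue you flag in your final paragraph), so the paper instead rewrites the defect as a double integral $p(1+p)\int_z^1\int_\zeta^1\eta^{p-1}\dd\eta\dd\zeta$, applies a change of variables, and argues monotonicity in $z$. Your argument avoids the case split entirely through the exact algebraic identity
\begin{align*}
  p y^{p-1}(x-y)^2-\big[x^{1+p}-y^{1+p}-(1+p)y^p(x-y)\big] = x\,y^p\big[1+p(t-1)-t^p\big],\quad t:=x/y,
\end{align*}
which I have verified by expanding both sides, and then reduces the upper bound to the single tangent-line inequality $t^p\le1+p(t-1)$ for the concave function $t\mapsto t^p$; the lower bound $g\ge0$ is the tangent-line inequality for the convex function $t\mapsto t^{1+p}$. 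This is shorter and more uniform than the paper's argument, at the cost of requiring one to guess the factorization. Both proofs are valid.
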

\begin{proof}
  After division by $y^{1+p}>0$, the estimate \eqref{eq:diffquot2} becomes
  \begin{align}
    \label{eq:diffquot2b}
    \big|z^{1+p}-1-(1+p)(z-1)\big|\le p(z-1)^2,
  \end{align}
  where $z:=x/y>0$.
  If $z\ge1$, then \eqref{eq:diffquot2b} is directly obtained
  by means of a Taylor expansion of $z\mapsto z^{1+p}$ around $\bar z=1$,
  that is
  \begin{align*}
    z^{1+p} = 1 + (1+p)(z-1) + \frac12 (1+p)p\zeta^{-1+p}(z-1)^2,
  \end{align*}
  where $\zeta\ge1$ is an intermediate value between $\bar z=1$ and $z$.
  Indeed, it suffices to observe that $0\le \frac{1+p}{2}\zeta^{-1+p}\le1$ since $p\in(0,1)$,
  and \eqref{eq:diffquot2b} follows.
  If instead $0\le z\le1$, consider
  \begin{align}
    \label{eq:dummy001}
    z^{1+p}-1-(1+p)(z-1) = (1+p) \int_z^1(1-\zeta^p)\dd\zeta
    = p(1+p) \int_z^1\left[\int_\zeta^1\eta^{p-1}\dd\eta\right]\dd\zeta.
  \end{align}
  We re-write the double integral,
  performing a change of variables $\zeta=1-(1-z)t$, $\eta=1-(1-z)s$,
  \begin{align*}
    J(z):=\int_z^1\left[\int_z^1\mathbf{1}_{\zeta\le\eta}\eta^{p-1}\dd\eta\right]\dd\zeta
    = (1-z)^2\int_0^1\left[\int_0^1\mathbf{1}_{s\le t}\big(1-(1-z)s\big)^{p-1}\dd s\right]\dd t.
  \end{align*}
  Since $p<1$, the integrand is monotonically decreasing with respect to $z\in[0,1]$.
  Therefore, $(1-z)^{-2}J(z)$ is decreasing, and so $ J(z) \le (1-z)^2J(0)$,
  which means in view of \eqref{eq:dummy001} that
  \begin{align*}
    z^{1+p}-1-(1+p)(z-1) = (1+p) \int_z^1(1-\zeta^p)\dd\zeta \leq p(1+p)J(0)(1-z)^2 = p(1-z)^2.
  \end{align*}
\end{proof}
\begin{lemma}
  For any positive grid function $f$, and all $p\in(0,1)$,
  \begin{align}
    \label{eq:laplace1}
    \big|\Delta_\kappa(f^{1+p})-(1+p)f_\kappa^p\Delta_\kappa f\big|
    \le pf_\kappa^{p-1}\big[\big(\dff_{\kapph}f\big)^2+\big(\dff_{\kapmh}f\big)^2\big]
  \end{align}
\end{lemma}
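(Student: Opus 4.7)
The plan is to reduce this to the pointwise Taylor-type estimate \eqref{eq:diffquot2} proved in the previous lemma. First I would rewrite the discrete Laplacian as a symmetric sum of the two ``one-sided'' second differences centered at $\kappa$. On the equi-distant grid with mesh size $\cell$ (which is the relevant setting, since \eqref{eq:laplace1} is invoked in the thin-film section of the paper where the mesh is equi-distant),
\[
\Delta_\kappa g = \frac{g_{\kappa+1}-2g_\kappa+g_{\kappa-1}}{\cell^2} = \frac{(g_{\kappa+1}-g_\kappa)+(g_{\kappa-1}-g_\kappa)}{\cell^2}.
\]
Applying this with $g=f^{1+p}$ and separately with $g=f$, the quantity I wish to estimate becomes the sum of two symmetric contributions,
\[
\Delta_\kappa(f^{1+p}) - (1+p)f_\kappa^p \Delta_\kappa f
= \frac{E_+}{\cell^2} + \frac{E_-}{\cell^2},
\]
where
\[
E_\pm := f_{\kappa\pm1}^{1+p} - f_\kappa^{1+p} - (1+p)f_\kappa^p\,(f_{\kappa\pm1}-f_\kappa).
\]
This is exactly the discrete analogue of writing the second derivative of $f^{1+p}$ as the sum of the two remainders in the Taylor expansion of $f^{1+p}$ about the value $f_\kappa$.

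Next I would apply \eqref{eq:diffquot2} with $y=f_\kappa$ and $x=f_{\kappa\pm1}$ (both strictly positive, so the hypothesis is met). This yields directly
\[
|E_\pm| \le p\,f_\kappa^{p-1}\,(f_{\kappa\pm1}-f_\kappa)^2.
\]
Dividing by $\cell^2$ and using that on the equi-distant grid
\[
\frac{(f_{\kappa\pm1}-f_\kappa)^2}{\cell^2} = \bigl(\dff_{\kappa\pm\hi} f\bigr)^2,
\]
the claimed estimate \eqref{eq:laplace1} follows immediately from the triangle inequality applied to $E_+/\cell^2+E_-/\cell^2$.

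The main (and essentially only) subtlety is the bookkeeping: one has to be careful to use the correct sign of $(f_{\kappa\pm1}-f_\kappa)$ in \eqref{eq:diffquot2} (which is stated for arbitrary positive reals and only involves $(x-y)^2$, so the sign issue disappears) and to match the factors of $\cell$ coming from the definition of $\Delta_\kappa$ with those in $\dff_{\kappa\pm\hi}f$. There is no genuine analytic obstacle here; the lemma is a purely algebraic consequence of \eqref{eq:diffquot2}. If a non-equidistant version were needed, one would have to pay attention to the asymmetry between $\cell_{\kapph}$ and $\cell_{\kapmh}$ and between $\cell_\kappa$ and these, but that case is not required for the applications in the paper.
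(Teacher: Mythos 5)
Your proof is correct and takes essentially the same approach as the paper: split the discrete Laplacian into the two one-sided second differences $E_\pm$, apply \eqref{eq:diffquot2} with $y=f_\kappa$, $x=f_{\kappa\pm1}$ to each, and conclude by the triangle inequality. Your observation about the equi-distant restriction is accurate and matches the paper's implicit use of a uniform mesh width $\cell$ in this lemma.
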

\begin{proof}
  This is an immediate application of \eqref{eq:diffquot2}:
  \begin{align*}
    \big|\Delta_\kappa(f^{1+p})-(1+p)f_\kappa^p\Delta_\kappa f\big|
    &=\cell^{-2}\big|\big(f_{\kappa+1}^{1+p}-2f_\kappa^{1+p}+f_{\kappa-1}^{1+p}\big)
      -(1+p)f_\kappa^p(f_{\kappa+1}-2f_\kappa+f_{\kappa-1})\big| \\
    &\le\cell^{-2}\big|\big(f_{\kappa+1}^{1+p}-f_\kappa^{1+p}\big)-(1+p)f_\kappa^p(f_{\kappa+1}-f_\kappa)\big| \\
    &\qquad+\cell^{-2}\big|\big(f_{\kappa-1}^{1+p}-f_\kappa^{1+p}\big)-(1+p)f_\kappa^p(f_{\kappa-1}-f_\kappa)\big| \\
    &\le p \cell^{-2}f_\kappa^{p-1}\big[(f_{\kappa+1}-f_\kappa)^2+(f_{\kappa-1}-f_\kappa)^2\big] \\
    &= p f_\kappa^{p-1}\big[(\dff_{\kapph}f)^2+(\dff_\kapmh f)^2\big].
  \end{align*}
\end{proof}
\begin{lemma}
  For any positive real numbers $x,y$, and all $p\in(0,1)$,
  \begin{align}
    \label{eq:moreelementary}
    \big(x^{1+p}-y^{1+p}\big)(x-y)\le(x^{p}+y^{p})(x-y)^2.
  \end{align}  
\end{lemma}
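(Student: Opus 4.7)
The plan is to exploit the symmetry of \eqref{eq:moreelementary} in $x$ and $y$: both $(x^{1+p}-y^{1+p})(x-y)$ and $(x^p+y^p)(x-y)^2$ are invariant under the swap $x \leftrightarrow y$, so without loss of generality I may reduce to the case $x \ge y > 0$. The subcase $x = y$ is trivial because both sides then vanish, so I focus on $x > y > 0$.

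In this regime, my first step is to divide through by the positive factor $(x-y)$, reducing the claim to the equivalent form
\begin{align*}
  \frac{x^{1+p} - y^{1+p}}{x-y} \le x^p + y^p.
\end{align*}
Next I expand the right-hand side $(x^p + y^p)(x - y) = x^{1+p} + xy^p - x^p y - y^{1+p}$ and cancel the matching terms $x^{1+p}$ and $y^{1+p}$. The inequality then collapses to the purely algebraic statement $xy^p - x^p y \ge 0$. Dividing by $xy > 0$, this is $x^{p-1} \le y^{p-1}$, which follows at once from the hypothesis $x \ge y$ together with the fact that $t \mapsto t^{p-1}$ is monotonically decreasing on $(0,\infty)$, since $p - 1 < 0$.

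I do not foresee any real obstacle: the result is essentially a slightly loose chord-slope upper bound for the convex function $t \mapsto t^{1+p}$, and the proof consists entirely of the symmetry reduction followed by a one-line algebraic rearrangement whose only structural input is the sign of $p-1$. In particular, unlike the neighbouring lemmas \eqref{eq:diffquot} and \eqref{eq:diffquot2}, no Taylor expansion or integral representation is required here, which is why the constant on the right-hand side is $1$ rather than the tighter $\tfrac{1+p}{2}$ that one would obtain by a trapezoidal-rule argument.
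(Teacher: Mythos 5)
Your proof is correct and is essentially the same as the paper's: both reduce by the WLOG assumption $x>y$ (justified by symmetry), expand $(x^p+y^p)(x-y)$, cancel the $x^{1+p}$ and $y^{1+p}$ terms, and reduce to $xy^p - x^p y \ge 0$, which you phrase via monotonicity of $t\mapsto t^{p-1}$ and the paper phrases as $(xy)^p(x^{1-p}-y^{1-p})\ge 0$ --- an algebraically equivalent observation.
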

\begin{proof}
  Without loss of generality, assume that $x>y$.
  For \eqref{eq:moreelementary}, we need to show that
  \begin{align*}
    x^{1+p}-y^{1+p}\le(x^{p}+y^{p})(x-y) = x^{1+p}-y^{1+p}+xy^{p}-x^{p}y. 
  \end{align*}
  This is obviously true since $0\le (xy)^p(x^{1-p}-y^{1-p})$.
\end{proof}
\begin{lemma}
  For any positive real numbers $x,y$, and all $p\in(0,1)$,
  \begin{align}
    \label{eq:evenmoreelementary}
    \big(x^{2+p}-y^{2+p}\big)(x-y)\ge(x^{1+p}+y^{1+p})(x-y)^2.
  \end{align}
  Moreover, if $a,b$ are non-negative weights, then
  \begin{align}
    \label{eq:evenmoreelementary2}
    (ax^{2+p}-by^{2+p})(x-y)\ge(ax^{1+p}+by^{1+p})(x-y)^2 - |a-b|(x^{2+p}+y^{2+p})|x-y|.
  \end{align}
\end{lemma}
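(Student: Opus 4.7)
My plan is to handle \eqref{eq:evenmoreelementary} directly and then reduce \eqref{eq:evenmoreelementary2} to it by additive splitting of the weights. For the first inequality, both sides are symmetric in $x$ and $y$, so I would assume without loss of generality that $x>y$ and divide through by the positive factor $x-y$. Expanding the right-hand side as $x^{2+p}-x^{1+p}y+xy^{1+p}-y^{2+p}$ and cancelling against the left-hand side reduces the claim to $x^{1+p}y\geq xy^{1+p}$, i.e.\ $x^p\geq y^p$, which is immediate from $p>0$.

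For \eqref{eq:evenmoreelementary2}, the plan is to decompose
\begin{align*}
ax^{2+p}-by^{2+p}&=a\bigl(x^{2+p}-y^{2+p}\bigr)+(a-b)y^{2+p},\\
ax^{1+p}+by^{1+p}&=a\bigl(x^{1+p}+y^{1+p}\bigr)+(b-a)y^{1+p},
\end{align*}
apply \eqref{eq:evenmoreelementary} to dispose of the $a$-multiples after multiplying by $(x-y)$ and $(x-y)^2$ respectively, and then use the identity $y^{2+p}+y^{1+p}(x-y)=xy^{1+p}$ to collapse the remainder. A short computation will thus reduce the problem to the single scalar inequality
\[
(a-b)(x-y)\cdot xy^{1+p}\;\geq\;-|a-b|\,(x^{2+p}+y^{2+p})\,|x-y|.
\]

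If $(a-b)(x-y)\geq 0$ this is trivial, so only the case $(a-b)(x-y)<0$ is interesting. In that case $-|a-b|\,|x-y|=(a-b)(x-y)$, and after dividing by this negative quantity the inequality flips to the elementary statement $xy^{1+p}\leq x^{2+p}+y^{2+p}$, which is a one-line application of Young's inequality with conjugate exponents $2+p$ and $\tfrac{2+p}{1+p}$.

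I do not anticipate any real conceptual difficulty; the calculation is routine algebra. The only point requiring care is the sign bookkeeping in the last step, since the absolute values on the right-hand side of \eqref{eq:evenmoreelementary2} are calibrated precisely to absorb the case where $(a-b)$ and $(x-y)$ have opposite signs. Treating the two sign regimes separately, as above, is cleaner than bounding the factors individually, which would lose too much.
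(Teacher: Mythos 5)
Your proof is correct. The first inequality is established exactly as in the paper: after the reduction to $x>y$ and division by $x-y$, everything cancels down to $x^{1+p}y\ge xy^{1+p}$. For \eqref{eq:evenmoreelementary2}, however, you take a genuinely different decomposition. The paper replaces both weights by their mean, writing $ax^{2+p}-by^{2+p}=\tfrac{a+b}{2}(x^{2+p}-y^{2+p})+\tfrac{a-b}{2}(x^{2+p}+y^{2+p})$, applies \eqref{eq:evenmoreelementary} to the mean part, and then converts $\tfrac{a+b}{2}(x^{1+p}+y^{1+p})$ back into $ax^{1+p}+by^{1+p}$ at the cost of a second error term, which is absorbed via the bound $(x^{1+p}+y^{1+p})|x-y|\le x^{2+p}+y^{2+p}$. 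Your version anchors the split at $a$, so that the entire remainder collapses exactly through the identity $y^{2+p}+y^{1+p}(x-y)=xy^{1+p}$, and the only inequality left to check is $xy^{1+p}\le x^{2+p}+y^{2+p}$, which indeed follows from Young's inequality with the conjugate exponents $2+p$ and $\tfrac{2+p}{1+p}$ (and even with the better constant $1$ in place of the sum of two terms). Both routes are equally elementary; yours produces a single error term and isolates the sign discussion in one transparent case split, while the paper's symmetric splitting keeps $x$ and $y$ on an equal footing at the price of two error terms. One cosmetic remark: since your asymmetric decomposition multiplies \eqref{eq:evenmoreelementary} by the weight $a$, you should note explicitly that $a\ge 0$ is what preserves the direction of the inequality there — but that is part of the hypothesis, so there is no gap.
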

\begin{proof}
  Without loss of generality, assume that $x>y$.
  For \eqref{eq:evenmoreelementary}, we need to show that
  \begin{align*}
    x^{2+p}-y^{2+p}\ge(x^{1+p}+y^{1+p})(x-y) = x^{2+p}-y^{2+p}+xy^{1+p}-x^{1+p}y. 
  \end{align*}
  This is obviously true since $0\ge xy(y^p-x^p)$.
  For the proof of \eqref{eq:evenmoreelementary2},
  we use \eqref{eq:evenmoreelementary} as follows:
  \begin{align*}
    &(ax^{2+p}-by^{2+p})(x-y)
    \ge \frac{a+b}2(x^{2+p}-y^{2+p})(x-y) - \frac{|a-b|}2(x^{2+p}+y^{2+p})|x-y| \\
    & \ge \frac{a+b}2(x^{1+p}+y^{1+p})(x-y)^2 - \frac{|a-b|}2(x^{2+p}+y^{2+p})|x-y| \\
    &\ge (ax^{1+p}+by^{1+p})(x-y)^2- \frac{|a-b|}2(x^{2+p}+y^{2+p}+(x^{1+p}+y^{1+p})|x-y|)|x-y|.
  \end{align*}
  From here, \eqref{eq:evenmoreelementary2} follows immediately since
  %(recall that $x>y$)
  \begin{align*}
    (x^{1+p}+y^{1+p})|x-y|\le
    %(2+p)x^{1+p}|x-y|\le|x^{2+p}-y^{2+p}|\le
    x^{2+p}+y^{2+p}
  \end{align*}
  which is obvious for $x=y$ and follows for $x>y$ by differentiation with respect to $x$.
\end{proof}

\begin{proof}[Proof of Lemma~\ref{LemmaGNS}]
  We concentrate on the proof of \eqref{eq:dGNS},
  and discuss the necessary changes for \eqref{eq:dGNS4} afterwards.
  A first intermediate result is
  \begin{align}
    \label{eq:technical3}
    \max_{\hi\le\kappa\le k^*-\hi}f_\kappa^{2(1+r)}
    \le A_{2,r}^{\frac{1+r}{1-r}}\left(\sum_{k=0}^{k^*-1}(\dff_kf)^2\cell_k\right)\left(\sum_{\kappa=\hi}^{k^*-\hi}f_\kappa^{2r}\cell_\kappa\right).    
  \end{align}
  Indeed, thanks to the elementary inequality \eqref{eq:diffquot} from the appendix,
  and recalling that $f_{-\hi}=0$,
  we have that
  \begin{align}
    \label{eq:gnsaux}
    \max_{\hi\le\kappa\le k^*-\hi}f_\kappa^{1+r}
    \le \sum_{k=0}^{k^*-1}\big|f_\kph^{1+r}-f_\kmh^{1+r}\big|
    \le 2^{-r}(1+r)\sum_{k=0}^{k^*-1}(f_\kph^r+f_\kmh^r)|\dff_k f|\cell_k.
  \end{align}
  Next, we apply the Cauchy-Schwarz inequality to the sum,
  and use \eqref{eq:meshestimate}:
  \begin{align*}
    \sum_{k=0}^{k^*-1}(f_\kph^r+f_\kmh^r)\big|\dff_kf\big|\cell_k
    &\le \left(2\sum_{k=0}^{k^*-1}(f_\kph^{2r}+f_\kmh^{2r})\cell_k\right)^{\frac12}
    \left(\sum_{k=0}^{k^*-1}(\dff_kf)^2\cell_k\right)^{\frac12} \\
    &\le \left(2(1+\mratio)\sum_{\kappa=\hi}^{{k^*}-\hi}f_\kappa^{2r}\cell_\kappa\right)^{\frac12}
    \left(\sum_{k=0}^{k^*-1}(\dff_kf)^2\cell_k\right)^{\frac12} .
  \end{align*}
  After taking the square, we arrive at \eqref{eq:technical3}.
  From here, \eqref{eq:dGNS} is obtained as follows:
  \begin{align*}
    \sum_{\kappa=\hi}^{k^*-\hi}f_\kappa^2\cell_\kappa
    &\le \left(\max_{\hi\le\kappa\le k^*-\hi}f_\kappa^{2(1+r)}\right)^{\frac{1-r}{1+r}}\sum_{\kappa=\hi}^{k^*-\hi}f_\kappa^{2r}\cell_\kappa \\
    &\le A_{2,r} \left[\left(\sum_{k=0}^{k^*-1}(\dff_kf)^2\cell_k\right)\left(\sum_{\kappa=\hi}^{k^*-\hi}f_\kappa^{2r}\cell_\kappa\right)\right]^{\frac{1-r}{1+r}}
      \sum_{\kappa=\hi}^{k^*-\hi}f_\kappa^{2r}\cell_\kappa.
  \end{align*}
  The argument for \eqref{eq:dGNS4} follows the same lines:
  consider \eqref{eq:gnsaux} with $r:=3s$, and instead of the Cauchy-Schwarz inequality,
  apply H\"older's inequality with exponents $4$ and $4/3$ to the sums, and take the fourth power.
  This produces the following analogue of \eqref{eq:technical3}:
  \begin{align*}
    \max_{\hi\le\kappa\le k^*-\hi}f_\kappa^{4(1+3s)}
    \le A_{4,s}^{\frac{1+3s}{1-s}}\left(\sum_{k=0}^{k^*-1}(\dff_kf)^4\cell_k\right)\left(\sum_{\kappa=\hi}^{k^*-\hi}f_\kappa^{4s}\cell_k\right)^3.
  \end{align*}
  Similarly as before,
  \begin{align*}
    \sum_{\kappa=\hi}^{k^*-\hi}f_\kappa^4\cell_\kappa
    &\le \left(\max_{\hi\le\kappa\le k^*-\hi}f_\kappa^{4(1+3s)}\right)^{\frac{1-s}{1+3s}}\sum_{\kappa=\hi}^{k^*-\hi}f_\kappa^{4s}\cell_\kappa \\
    &\le A_{4,s} \left[\left(\sum_{k=0}^{k^*-1}(\dff_kf)^4\cell_k\right)\left(\sum_{\kappa=\hi}^{k^*-\hi}f_\kappa^{4s}\cell_\kappa\right)^3\right]^{\frac{1-s}{1+3s}}
      \sum_{\kappa=\hi}^{k^*-\hi}f_\kappa^{4s}\cell_\kappa,
  \end{align*}
  which is \eqref{eq:dGNS4}.
\end{proof}

\bibliographystyle{abbrv}
\bibliography{thinfilm}

\end{document}